\documentclass[a4paper,11pt]{article}
\pagestyle{plain}
\usepackage{amssymb, amsmath, amsthm, graphicx}

\textwidth=15cm
\textheight=22cm
\oddsidemargin=5mm
\evensidemargin=5mm
\parskip=3pt
\parindent=6mm
\topmargin=-0.5cm
\marginparwidth=1cm

\newtheorem{theorem}{Theorem}[section]

\newtheorem{lemma}{Lemma}[section]
\newtheorem{proposition}{Proposition}[section]
\newtheorem{corollary}{Corollary}[section]

  \makeatletter
  \@addtoreset{equation}{section}
  \makeatother
\usepackage[top=40truemm,bottom=40truemm,left=30truemm,right=30truemm]{geometry}

\title{\bfseries{\Large{Generation and motion of interfaces in one-dimensional stochastic Allen-Cahn equation}}}
\author{\Large{Kai Lee}\\
Graduate School of Mathematical Sciences, University of Tokyo
}

\date{}

\begin{document}

\maketitle

\begin{abstract}
In this paper we study a sharp interface limit for a stochastic reaction-diffusion equation which is parametrized by a sufficiently small parameter $\varepsilon >0$. We consider the case that the noise is a space-time white noise multiplied by $\varepsilon ^\gamma a(x)$ where the function $a(x)$ is a smooth function which has a compact support. At first, we show a generation of interfaces for a one-dimensional stochastic Allen-Cahn equation with general initial values. We prove that interfaces are generated in a time of order $O(\varepsilon |\log \varepsilon|)$. After the generation of interfaces, we connect it to the motion of interfaces which was investigated by Funaki \cite{f} for special initial values. Funaki \cite{f} proved that the interface moved in a proper time scale obeying a certain SDE if the interface formed at the initial time. We take the time scale of order $O(\varepsilon^{-2\gamma - \frac{1}{2}})$. This time scale is same as that of \cite{f} and interface moves in this time scale obeying some SDE with high probability.
\end{abstract}

\section{Introduction}
\label{sec1}
The Allen-Cahn equation is a reaction diffusion equation with a bistable reaction term $f$; see (\ref{reaction}) for detailed conditions on $f$. This equation describes physical phenomena such as dynamical phase transition, and, in one dimension, it has the form:
\begin{align}
\label{eq:pde}
\begin{cases}
\dot{u} ^\varepsilon (t,x) &= \Delta u^\varepsilon (t,x)+\displaystyle{\frac{1}{\varepsilon}} f(u^\varepsilon (t,x) ),\ \ \ t > 0,\  x\in \mathbb{R},\\
u^\varepsilon (0,x) &= u_0 ^\varepsilon (x),\ \ \ x\in \mathbb{R},
\end{cases}
\end{align}
where $\varepsilon >0$, $\dot{u} = \frac{\partial u}{\partial t}$ and $\Delta u=\frac{\partial ^2 u}{\partial x^2}$. We assume that the function $f$ has $\pm 1$ as stable points and satisfies $\int _{-1} ^1 f(u) du=0$. Then, it is expected that the solution $u^\varepsilon$ tends to $\pm 1$ as $\varepsilon \to 0$ in a very short time and an interface appears to separate two different phases $\pm1$. In recent studies of the deterministic case, the behaviors of the solution have been investigated. For example, Chen \cite{xc} studied the initial value problem (\ref{eq:pde}) in one dimension and classified the behaviors of solutions into four stages:
(i) Phase separation: In a very short time $u^\varepsilon$ tends to $\pm 1$. In other words, interfaces are generated in a time of order $O(\varepsilon |\log \varepsilon|)$.
(ii) Generation of metastable patterns: Until the time of order $O(1)$, $u^\varepsilon$ enters into a neighborhood of standing waves associated with $f$.
(iii) Super-slow motion of interfaces: An approximated ODE governs the very slow interface motion for a long time of order $O(e^\frac{C}{\varepsilon})$ with $C>0$.
(iv) Annihilation of interfaces: Under the super-slow motion, when two interfaces are close enough, the interface between them is annihilated and they restore the super-slow motion.

We are interested in the first generation time of interfaces and an appropriate time scale for the interface motion when a random external noise term is added. Carr and Pego \cite{cp} studied the one-dimensional deterministic case, and they proved the proper time scale for interface motion is of order $O(\exp(\frac{C}{\varepsilon}))$ as we mentioned above. Funaki \cite{f94} and \cite{f} studied the stochastic case with an additive noise:
\begin{align}
\label{eq:spde}
\begin{cases}
\dot{u} ^\varepsilon (t,x) &= \Delta u^\varepsilon (t,x)+\displaystyle{\frac{1}{\varepsilon}} f(u^\varepsilon (t,x) ) + \varepsilon ^\gamma a(x) \dot{W} _t(x),\ \ \ t> 0,\  x\in \mathbb{R},\\
u^\varepsilon (0,x) &= u^\varepsilon _0 (x),\ x\in \mathbb{R}, \ \ \ u^\varepsilon (t,\pm \infty) = \pm 1,\ \ \ t\geq 0,
\end{cases}
\end{align}
where $a\in C_0^\infty (\mathbb{R})$. Here $\dot{W} _t(x)$ is a space-time white noise on $\mathbb{R}$ which formally has a covariance structure
\begin{align}
\label{cov}
E[\dot{W}_t (x) \dot{W}_s (y)] = \delta(t-s) \delta(x-y)
\end{align}
and $\delta$ is the Dirac's delta (See also Bertini et al. \cite{bbb} and \cite{bbbp}). Funaki \cite{f} showed that the proper time scale is of order $O(\varepsilon^{-2\gamma - \frac{1}{2}})$. This behavior of the solution is corresponds to the phase (iii) in the deterministic case. The motion of interface for the stochastic case is much faster than that of the deterministic case only in this phase because of the strong effect of the noise. Funaki treated the case that an interface is already formed at the initial time. 

In this paper, we investigate more general initial values and, in particular, compute the first generation time of the interface. We further study whether we can connect it to the motion of interface in the case that the initial value is not an interface. 

\subsection{Setting of the model}
We consider the SPDE (\ref{eq:spde}) of Allen-Cahn type in one dimension. The reaction term $f\in C^2( \mathbb{R} )$ satisfies the following conditions:
\begin{align}
\label{reaction}
 \begin{cases}
  \text{(i)}f \text{ has only three zeros }\pm 1, 0, & \\
  \text{(ii)}f'( \pm 1) =:-p < 0,\  f'(0)=: \mu > 0, & \\
  \text{(iii)}f(u)\leq C(1+|u|^q)\text{ with some }C,q>0,& \\
  \text{(iv)}f'(u)\leq c\text{ with some }c>0, & \\
  \text{(v)}f\text{ is odd,} &\\
  \text{(vi)}f(u)\leq -p(u-1) \ (u\geq 1).
 \end{cases}
\end{align}
The conditions (i) and (ii) imply that the reaction term is bistable and has only $u=\pm 1$ as stable points. The existence of the global solution for the SPDE (\ref{eq:spde}) is assured by (iii) and (iv) (see p.222 and Section 2 of \cite{f}, and Section 2 of \cite{f94}). Moreover, we need the assumption (iv) in order to prove a comparison theorem by applying the maximum principle for the parabolic PDEs (see Section 2 of \cite{fr}). The condition (v) implies $\int _{-1}^{1}f(u)du=0$, from which we see that the corresponding traveling wave solution is actually a standing wave. We impose (vi) for a technical reason. We can take $f(u)=u-u^3$ as an example of $f$.

Next we explain the external noise term. At first, we fix a filtered probability space $(\Omega, \mathcal{F}, P, \{ \mathcal{F} _t\} _{t\geq 0})$ and consider stochastic processes defined on it. Let $\dot{W}_t (x)$ be the space-time white noise which formally has the covariance structure (\ref{cov}) and is an $\{ \mathcal{F} _t\} _{t\geq 0}$-adapted process. We can rewrite the equation (\ref{eq:spde}) in the mild form:
\begin{align}
u^\varepsilon(t)= S_t u_0 ^\varepsilon + \frac{1}{\varepsilon} \int _0^t S_{t-s}f(u^\varepsilon (s))ds +\ \int _0^t S_{t-s} \varepsilon ^\gamma a dW_s \nonumber
\end{align}
where $S_t$ is an integral operator defined by $S_t u(x):=\int _{\mathbb{R}} p(t,x,y)u(y)dy$ and $p(t,x,y) :=\frac{1}{\sqrt{4\pi t}}e^{-\frac{(x-y)^2}{4t}}$. We give a mathematical meaning to the last term as a stochastic integral with respect to an operator valued integrand. Another way to interpret (\ref{eq:spde}) is as a weak solution, namely $u^\varepsilon(t)$ satisfies
\begin{align}
\langle u^\varepsilon(t) - u^\varepsilon(0) , \varphi \rangle =\int _0 ^t \langle u^\varepsilon(s) , \Delta \varphi \rangle ds + \frac{1}{\varepsilon} \int _0 ^t \langle f(u^\varepsilon(s)) , \varphi \rangle ds + \varepsilon ^\gamma \int _0 ^t \langle \varphi , adW_s \rangle \nonumber
\end{align}
for all $\varphi \in C_0 ^\infty (\mathbb{R})$. Here $ \langle , \rangle$ means the inner product on $L^2(\mathbb{R})$. It is well-known that every mild solution is a weak solution and vice versa (see \cite{dpz}). Moreover we assume that $u_0 ^\varepsilon \in C^2(\mathbb{R})$ and there exist constants $C_0 >1$, $C$, $C'>0$ and $\kappa >1$ such that
\begin{align}
\label{eq:ini}
 \begin{cases}
   \text{(i)} \| u_0 ^\varepsilon \| _\infty + \| u_0 ^{\varepsilon \prime}  \| _\infty + \| u_0 ^{\varepsilon \prime \prime} \| _\infty \leq C_0, & \\
   \text{(ii)} \text{There exists a unique } \xi _0 \in [-1,1] \text{ independent of } \varepsilon >0 \text{ such that } u_0 ^\varepsilon (\xi _0)= 0, & \\
   \text{(iii)} | u_0 ^\varepsilon (x)| \geq C \varepsilon ^{\frac{1}{2}}\ (|x-\xi _0| \geq C' \varepsilon ^{\frac{1}{2}}), & \\
   \text{(iv)} | u_0 ^\varepsilon (x) - 1 | + | u_0 ^{\varepsilon \prime} (x) | + | u_0 ^{\varepsilon \prime \prime} (x) | \leq \varepsilon ^{\kappa }C_\mu \exp(-\frac{\sqrt{\mu} x}{2}) \  (x\geq 1), & \\
   \text{(v)} | u_0 ^\varepsilon (x) + 1 | + | u_0 ^{\varepsilon \prime} (x) | + | u_0 ^{\varepsilon \prime \prime} (x) | \leq \varepsilon ^{\kappa } C_\mu \exp(\frac{\sqrt{\mu} x}{2}) \  (x\leq  -1), & \\
 \end{cases}
\end{align}
where $C_\mu:=\frac{\mu}{4} \wedge 1$ and $\| \cdot \| _\infty$ is the supremum norm on $C(\mathbb{R})$. Here the constant $\mu$ is defined in (\ref{reaction}). Conditions on the constant $\kappa >1$ are stated in Theorem \ref{thm23} and in Section \ref{sec3}. We use the assumption (i) throughout of this paper. Because we consider the case that only one interface is formed, we assume the condition (ii). We use the conditions (iii), (iv) and (v) in order to prove the generation of interface for the deterministic case in Section \ref{sec2} as a preparation.

In this paper, we assume that the support of $a$ is included in $[-1,1]$ without loss of generality. And for each $n \in \mathbb{N}$, Sobolev space $H^n(\mathbb{R})$ is defined by $H^n(\mathbb{R}) := \{ f\in L^2(\mathbb{R}) | \|f \|_{H^n(\mathbb{R})} < \infty \}$ equipped with a norm $\|f \|_{H^n(\mathbb{R})} := \sum _{k=0} ^n \|\nabla ^k f\|_{L^2(\mathbb{R})}$ where $\nabla ^k f (x) := \frac{d^k f}{dx^k}(x)$.

\subsection{Main result}
As we mentioned, in this paper, we discuss the generation of interfaces and give estimates on the first generation time of interfaces. After that, we connect this to the motion of interface in one-dimensional case which was introduced in \cite{f}. Before we state the main result, we define a function $m$ which satisfies the following ODE and is called a standing wave:
\begin{align}
\label{ode11}
\begin{cases}
\Delta m + f(m) =0,\ m(0)=0,\ m(\pm \infty) =\pm 1, \\
m\text{ is monotone increasing}.
\end{cases}
\end{align}
We explain about this function below. Now we formulate our main result.
\begin{theorem}
\label{thm23}
Assume that $u_0^\varepsilon$ satisfies (\ref{eq:ini}),  $\bar{u}^\varepsilon (t,x) := u^\varepsilon (\varepsilon ^{-2\gamma -\frac{1}{2}} t,x)$ and $\gamma$ is a constant such that
\begin{align}
\label{gamma}
&{\rm there \  exist \  constants\  } \kappa > \kappa' > 1{\rm \ which \  satisfy} \nonumber\\
&\begin{cases}
(\kappa ' +\frac{21}{40} + \frac{\gamma}{10})\vee 2\kappa ' < \kappa < \gamma -\frac{C_f}{\mu},\\
1< \kappa ' <\frac{1}{20} + \frac{\gamma}{5}.
\end{cases}
\end{align}
Then there exist a.s. positive random variable $C(\omega) \in L^\infty (\Omega )$ and stochastic processes $\xi_t ^\varepsilon$ such that
\begin{align}
P( \| \bar{u} ^\varepsilon (t,\cdot )-\chi _{\xi^\varepsilon _t}(\cdot )\| _{L^2(\mathbb{R})} \leq \delta \  for \ all \  t \in [C(\omega )\varepsilon^{2\gamma +\frac{3}{2}} |\log \varepsilon |,T]  ) \to 1\ \ \ (\varepsilon \to 0), \nonumber
\end{align}
for all $\delta >0$ and $T>0$. Moreover, the distribution of the process $\xi^\varepsilon _t$ on $C([0,T], \mathbb{R})$ weakly converges to that of $\xi _t$ and $\xi_t$ obeys the SDE starting at $\xi_0$:
\begin{align}
 \label{eq:sde}
 d\xi_t = \alpha _1 a(\xi_t ) dB_t + \alpha _2 a(\xi_t)a'(\xi_t)dt,
\end{align}
where $\alpha _1$ and $\alpha _2 \in \mathbb{R}$ are defined as
\begin{align}
&\alpha _1 := \frac{1}{\| \nabla m \|_{L^2}} \nonumber \\
&\alpha _2 := - \frac{1}{\| \nabla m \|_{L^2} ^2} \int _0^\infty \int _{\mathbb{R}} \int _{\mathbb{R}} x p(t,x,y;m)^2 f''(m(y)) \nabla m(y) dxdydt \nonumber
\end{align}
and $p(t,x,y;m)$ denotes the fundamental solution for $\frac{\partial}{\partial t} - \Delta - f'(m)$ (See also \cite{f}, p. 252).
\end{theorem}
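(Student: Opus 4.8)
The plan is to split the proof into two essentially independent phases and then glue them together, mirroring the two-stage structure (generation, then motion) already announced in the introduction.

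\textbf{Phase 1: generation of interfaces.} First I would establish, by a comparison argument, that starting from the general initial datum $u_0^\varepsilon$ satisfying (\ref{eq:ini}), the solution $\bar u^\varepsilon$ of the stochastic equation enters an $\mathcal{O}(\delta_1)$-neighborhood (in a suitable norm) of the step function $\chi_{\xi_0}$, or more precisely of a shifted standing wave $m((\cdot-\xi_0)/\varepsilon\text{-type scaling})$, within the short time of order $\varepsilon|\log\varepsilon|$ in the original time variable, i.e. within time $C(\omega)\varepsilon^{2\gamma+3/2}|\log\varepsilon|$ in the rescaled variable. The strategy here is to freeze the diffusion term and treat the equation as a perturbation of the ODE $\dot v = \varepsilon^{-1} f(v)$: on the very short generation time scale the Laplacian contributes negligibly (by the estimates in Section \ref{sec2}), the noise contribution is controlled with high probability by a maximal inequality for the stochastic convolution $\int_0^t S_{t-s}\varepsilon^\gamma a\,dW_s$ (using $a\in C_0^\infty$, the smallness $\varepsilon^\gamma$, and the conditions relating $\kappa,\kappa',\gamma$ in (\ref{gamma})), and the deterministic ODE drives $v$ to $\pm1$ away from the zero set $\{u_0^\varepsilon=0\}=\{\xi_0\}$ in time $\mathcal{O}(\varepsilon|\log\varepsilon|)$ thanks to $f'(0)=\mu>0$ and the lower bound (iii) on $|u_0^\varepsilon|$. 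The exponential-decay assumptions (iv),(v) on $u_0^\varepsilon-(\pm1)$ at infinity, together with (vi), are what make this comparison uniform on all of $\mathbb{R}$ and produce the $L^2$-closeness rather than merely local closeness; the constant $C_f/\mu$ appearing in (\ref{gamma}) is exactly the constant governing the generation time versus the noise-size tradeoff.

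\textbf{Phase 2: motion of interfaces.} Once at time $\tau_\varepsilon(\omega):=C(\omega)\varepsilon^{2\gamma+3/2}|\log\varepsilon|$ the solution is $\delta_1$-close in $L^2$ to a profile of the form $m_{\xi}$ for some $\xi$ close to $\xi_0$, I would invoke the result of Funaki \cite{f}: his theorem takes as hypothesis that an interface is already formed at the initial time and shows that, in the time scale $\varepsilon^{-2\gamma-1/2}$, the interface position process converges weakly in $C([0,T],\mathbb{R})$ to the solution of the SDE (\ref{eq:sde}) with the stated $\alpha_1,\alpha_2$. By the Markov property of $u^\varepsilon$ and the fact that $\tau_\varepsilon \to 0$ in the rescaled time (since $\varepsilon^{2\gamma+3/2}|\log\varepsilon|\cdot\varepsilon^{2\gamma+1/2}\to 0$... more precisely $\tau_\varepsilon$ in the $\bar u$-time variable tends to $0$), we can restart the equation at time $\tau_\varepsilon$ from a datum that is $\delta_1$-close to an admissible interface profile and apply Funaki's machinery. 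The output is a process $\xi_t^\varepsilon$ (defined for $t\ge\tau_\varepsilon$, extended to $[0,T]$ in the obvious way) satisfying the $L^2$-tube estimate and the weak-convergence statement; the initial condition $\xi_0$ for the limit SDE is inherited from assumption (ii) on $u_0^\varepsilon$ and the continuity of $\xi_\cdot$.

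\textbf{Gluing and the main obstacle.} The two phases are combined by: (a) choosing $\delta_1$ small enough, depending on $\delta$ and $T$, that Funaki's stability estimates propagate the initial $\delta_1$-closeness into the uniform-in-$[ \tau_\varepsilon,T]$ $\delta$-closeness with probability tending to $1$; (b) checking that the exceptional sets from Phase 1 (failure of the noise bound) and Phase 2 (failure of Funaki's high-probability estimates) both have probability $o(1)$, so their union does too; and (c) verifying that $C(\omega)\in L^\infty(\Omega)$, which follows from the generation-time bound being deterministic up to the controlled size of the stochastic convolution on the short interval. I expect the main obstacle to be Phase 1 under a genuinely general initial datum: the comparison argument of Section \ref{sec2} is deterministic, so one must carefully separate the noise (controlled pathwise on the short window via Burkholder-type and Sobolev estimates, exploiting that the window length is only $\varepsilon|\log\varepsilon|$ so the accumulated noise is of size a small power of $\varepsilon$ by (\ref{gamma})) from the deterministic generation dynamics, and then show that the resulting profile is close enough — in $L^2(\mathbb{R})$ uniformly in space, including the far field — to a true shifted standing wave that Funaki's hypotheses are met. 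The delicate bookkeeping is precisely the web of inequalities among $\kappa$, $\kappa'$, $\gamma$, $\mu$, and $C_f$ in (\ref{gamma}), which must simultaneously guarantee (i) the noise is negligible on the generation window, (ii) the generated profile lies in Funaki's basin of attraction, and (iii) the interface has not yet moved appreciably by time $\tau_\varepsilon$.
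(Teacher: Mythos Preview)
Your two-phase outline (generation, then motion) matches the paper, and your Phase~1 ingredients --- comparison with the ODE $\dot Y=\varepsilon^{-1}f(Y)$, noise control via maximal/Burkholder estimates on the stochastic convolution, the role of assumptions (iii)--(vi) in (\ref{eq:ini}) --- are essentially the content of Section~\ref{sec2} and Propositions~\ref{thm81}--\ref{thm82}. The gap is in the hand-off to Phase~2.

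You propose to restart at the random time $\tau_\varepsilon$ from a datum that is ``$\delta_1$-close in $L^2$'' to a shifted standing wave and then ``invoke Funaki's machinery.'' This does not work as stated, for two reasons. First, Funaki's theorem in \cite{f} is proved for the \emph{exact} initial value $m(\varepsilon^{-1/2}(\cdot-\xi_0))$, and his stability argument (the tube around $M^\varepsilon$) is phrased not in terms of $L^2$-closeness of the solution but in terms of smallness of the Fermi coordinate $s(v_t^\delta)$ in $H^1$ for a mollified version $v^\delta$; mere $L^2$-closeness of size $\delta_1$ does not put you in this basin. Second, the SPDE solution $u^\varepsilon$ with space-time white noise is not in $H^1$ at any positive time, so a Markov restart at $\tau_\varepsilon$ gives you an irregular datum for which Funaki's $H^1$ bookkeeping cannot even be initiated directly.

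The paper bypasses this by an additional comparison layer that your proposal omits: at the (deterministic) generation time $C_1\varepsilon|\log\varepsilon|$ it constructs \emph{smooth} super-- and sub--solutions $u_\pm^\varepsilon$ of the SPDE, defined in (\ref{ini3}), whose initial values are built from $m(\varepsilon^{-1/2}(\cdot-\xi_0\mp C\varepsilon^{\bar\beta}))\pm C'\varepsilon^\kappa$ and lie $\varepsilon^\kappa$-close to $M^\varepsilon$ by construction. One then proves, via a genuinely new energy estimate (Lemma~\ref{lem84}: integrating $\partial_t s\cdot s$ against the rescaled PDE on $I_\varepsilon^\pm$, using Proposition~\ref{thm82} for the boundary and far-field terms), that the mollified Fermi coordinates of $u_\pm^\varepsilon$ become small in $H^1$ by some time of order $\varepsilon^{2\gamma+3/2}|\log\varepsilon|$ and stay there (Lemma~\ref{lem85}, Propositions~\ref{thm83}--\ref{thm84}). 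Funaki's interface-dynamics argument is then run on $u_\pm^\varepsilon$ separately, and the conclusion for $u^\varepsilon$ follows from the sandwich $u_-^\varepsilon\le u^\varepsilon\le u_+^\varepsilon$ (Proposition~\ref{pro81}) together with $|\xi_0^{\varepsilon,+}-\xi_0^{\varepsilon,-}|\le 2C\varepsilon^{\bar\beta}\to 0$. Without this super/sub-solution device and the $H^1$ energy estimate, your Phase~2 step ``apply Funaki'' has no entry point; this is precisely where the constants $\kappa>\kappa'>1$ and the inequalities $2\kappa'<\kappa$, $\kappa'+\tfrac{21}{40}+\tfrac{\gamma}{10}<\kappa$ in (\ref{gamma}) are actually consumed.
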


From the condition (\ref{gamma}), we need the condition $\gamma > \frac{19}{4}$ at least. This is the same condition as one of Funaki's result (see Theorem 8.1 in \cite{f}).

In this case, we can regard $C(\omega )\varepsilon^{2\gamma +\frac{3}{2}} |\log \varepsilon |$ as the first generation time in the time scale of order $O(\varepsilon^{-2\gamma -\frac{1}{2}})$. This is the same order as the first generation time for the deterministic case if we do not change the time scale. Our result covers that of \cite{f}. The time scale for the interface motion is the same.

Now we explain the idea of Funaki \cite{f94} and \cite{f} briefly. In \cite{f}, he showed that $\bar{u}^\varepsilon$ converges to $\chi _{\xi^\varepsilon _t}$ as $\varepsilon \to 0$, and the interface motion at the limit is described by (\ref{eq:sde}) in the case that the initial value $u_0 ^\varepsilon = m(\varepsilon ^{-\frac{1}{2}}(x-\xi _0 ))$. He took Ginzburg-Landau free energy as a Lyapnov functional corresponding to the equation (\ref{eq:pde}), which is defined by
\begin{align}
\mathcal{H}^\varepsilon (u) := \int _{\mathbb{R}} \left \{ \frac{1}{2}|\nabla u|^2 +\frac{1}{\varepsilon}F(u) \right \} dx \nonumber
\end{align}
where $f=-F'$. Note that the solution $u^\varepsilon$ of (\ref{eq:spde}) is not differentiable in $x$. Then, the set of minimizers of $\mathcal{H}^\varepsilon$ in the class of functions $u$ satisfying $u(\pm \infty)=\pm 1$ is given by $M^\varepsilon :=\{ m(\varepsilon ^{-\frac{1}{2}} (x-\eta)) | \eta \in \mathbb{R} \}$. Here we define a coordinate in the neighborhood of $M^1$ which is called Fermi coordinate. For $u\in \{u-m \in L^2(\mathbb{R}) \}$, we set $dist(u,M^1):=\inf _{\eta \in \mathbb{R}} \| u - m(\cdot -\eta) \| _{L^2(\mathbb{R})}$. If $dist (u,M^1)< \beta$ for some $\beta>0$, then there exists a unique constant $\eta(u) \in \mathbb{R}$ which attains $\inf _{\eta \in \mathbb{R}} \| u - m(\cdot -\eta) \| _{L^2(\mathbb{R})}$. And thus, we can see $u=m_{\eta(u)}+s(u)$ where $m_{\eta}(x)=m(x-\eta)$. We call the coordinate $(\eta(u),s(u)) \in \mathbb{R} \times L^2(\mathbb{R})$ Fermi coordinate.

If we change the time scale as $\bar{u}^\varepsilon(t,x):= u^\varepsilon(\varepsilon^{-2\gamma - \frac{1}{2}}t,x)$, $\bar{u}^\varepsilon$ satisfies an SPDE:
\begin{align}
\label{scalingspde}
\dot{\bar{u}}^\varepsilon = \varepsilon^{-2\gamma -\frac{1}{2}}\left \{ \Delta \bar{u}^\varepsilon +\frac{1}{\varepsilon}f(\bar{u}^\varepsilon)\right \} + (\varepsilon^{-2\gamma -\frac{1}{2}})^{\frac{1}{2}}\cdot \varepsilon^{\gamma} a(x) \dot{W}_t(x).
\end{align}
in a law sense. We give a formal proof of (\ref{scalingspde}). We have that
\begin{align}
\bar{u}^\varepsilon (t) - \bar{u}^\varepsilon (0) &= u^\varepsilon (\varepsilon^{-2\gamma - \frac{1}{2}} t) - u_0 ^\varepsilon \nonumber \\
&= \int _0 ^{\varepsilon^{-2\gamma -\frac{1}{2}}t} \left \{ \Delta u^\varepsilon (s) +\frac{1}{\varepsilon}f(u^\varepsilon (s))\right \} ds + \varepsilon^{\gamma} a(x) W_{\varepsilon^{-2\gamma -\frac{1}{2}}t} (x), \nonumber \\
&= \varepsilon^{-2\gamma -\frac{1}{2}} \int _0 ^t \left \{ \Delta \bar{u}^\varepsilon (s) +\frac{1}{\varepsilon}f(\bar{u}^\varepsilon (s))\right \} ds + (\varepsilon^{-2\gamma -\frac{1}{2}})^{\frac{1}{2}} \cdot \varepsilon^{\gamma} a(x) W_t (x), \nonumber
\end{align}
from SPDE (\ref{eq:spde}). In the third line, we have the first term from the integration by substitution $s \mapsto \varepsilon^{2\gamma +\frac{1}{2}}s$, and the second term comes from the self-similarity of space-time white noise (formally we have $W_{a^2 t}(x) = a W_t(x)$ in a law sense).

Because of the strong effect of the drift term, the solution of (\ref{eq:spde}) started from $m(\varepsilon ^{-\frac{1}{2}} (x-\xi _0)) \in M^\varepsilon$ should be attracted to $M^\varepsilon$. From this observation, Funaki \cite{f} showed that the solution $\bar{u}^\varepsilon$ did not go out of a tubular neighborhood of $M^\varepsilon$ in $L^2$-sense if the initial value was on $M^\varepsilon$, by investigating a structure of the functional $\mathcal{H}^\varepsilon$ around minimizers $M^\varepsilon$. And he derived an SDE as the dynamics of the interface by defining an appropriate coordinate on this neighborhood.

However, in our case, the initial value is not close to the neighborhood of $M^\varepsilon$. Thus, we need to show that the solution $u^\varepsilon$ enters the neighborhood of $M^\varepsilon$ in a short time with high probability, even if the initial value is not close to $M^\varepsilon$. We call this behavior the generation of interface.

We first prove the generation of interface in the deterministic case in Section \ref{sec2} as a preparation. We refer to the comparison argument in \cite{ham}. The proof of the main result is given in Section \ref{sec3}.

\section{The deterministic results}
\label{sec2}
In this section, we will show the generation of interface for the solution of PDE (\ref{eq:pde}). We assume that there exist positive constants $p$, $\mu>0$ such that the reaction term $f$ satisfies
\begin{align}
\label{reaction2}
 \begin{cases}
  \text{(i)}f \text{ has only three zeros }a_\pm, a_0, & \\
  \text{(ii)}f'( a_\pm) =-p < 0,\  f'(a_0)=\mu > 0, & \\
  \text{(iii)}f(u)\leq C(1+|u|^q)\text{ with some }C,q>0,& \\
  \text{(iv)}f'(u)\leq c\text{ with some }c>0, & \\
  \text{(v)}f(u)\leq -p(u-a_+) \ (u\geq a_+), & \\
  \text{(vi)}f(u)\geq -p(u-a_-) \ (u\leq a_-),
 \end{cases}
\end{align}
for some $a_-< a_0 < a_+$. We choose $a_\pm$ and $a_0$ because we need to change the stable points in order to construct super and sub solutions in Section \ref{sec3}. The initial value $u_0 ^\varepsilon$ satisfies the condition (\ref{eq:ini}) with $\pm 1$ replaced by $a_\pm$ throughout the rest of this section. We may take $C_0$ large enough such that $[a_- ,a_+]$ is included in $[-2C_0 , 2C_0]$. The argument in this section is based on Alfaro et al \cite{ham}. They proved that, for small $\eta >0$, the solution $u^\varepsilon$ formed an interface of width $O(\varepsilon ^{\frac{1}{2}})$ and each phase entered the $\eta$-neighborhood of $a_\pm$ uniformly at the time $t=\frac{1}{2\mu}\varepsilon | \log \varepsilon |$ (see Theorem 3.1 of \cite{ham}). However, in order to connect to the motion of interface, we need to show that the solution $u^\varepsilon$ enters $\varepsilon ^\kappa$-neighborhood of $M^\varepsilon$ in $L^2$-sense, that is $\inf_{\eta \in \mathbb{R}} \| u^\varepsilon - m(\varepsilon ^{-\frac{1}{2}}(\cdot -\eta)) \|_{L^2(\mathbb{R})} \leq \varepsilon ^\kappa$, for $\kappa >0$ and $\varepsilon ^\kappa \ll \eta$. And thus, we need to consider the time after $t=\frac{1}{2\mu}\varepsilon | \log \varepsilon |$.

\subsection{Auxiliary estimates}
\label{sec2-1}
We first prepare some preliminary results. We consider the ODE:
\begin{align}
\begin{cases}
\dot{Y} (\tau , \xi) = f( Y(\tau , \xi )),\ \ \  \tau >0,\\
Y(0, \xi)=\xi \in [-2C_0 ,2C_0].\end{cases} \nonumber
\end{align}

\begin{lemma}
\label{lem31}
There exists a constant $\eta_0 \in (0, a_+ - a_0)$ such that, for any $\eta \in (0,\eta_0 )$ and $\alpha >0$, there exists a positive constant $C>0$ and we have that
\begin{align}
Y(C|\log \varepsilon | ,\xi ) \geq a_+ -\eta \ \ \ (for\ all\ \xi \in [a_0+ \varepsilon ^\alpha ,a_+ -\eta]) \nonumber
\end{align}
for sufficiently small $\varepsilon >0$. The constant $C$ can be taken depending only on $\alpha$ and $f$.
\end{lemma}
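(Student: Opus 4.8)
The plan is to split each trajectory $\tau\mapsto Y(\tau,\xi)$ into two regimes: a slow ``escape'' phase near the unstable zero $a_0$, where the linearization $f'(a_0)=\mu>0$ produces exponential repulsion and the orbit needs a time of order $O(|\log\varepsilon|)$ — with constant governed by $\alpha$ and $\mu$ — to leave a fixed neighborhood of $a_0$, followed by a fast ``transport'' phase on a compact subinterval of $(a_0,a_+)$ on which $f$ is bounded below by a positive constant, which is crossed in time $O(1)$. Adding the two contributions yields a total time $C|\log\varepsilon|$ with $C=C(\alpha,f)$.

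I would first record two preliminaries from (\ref{reaction2}). Since $f\in C^2$, $f(a_0)=0$ and $f'(a_0)=\mu$, continuity of $f'$ gives $\rho>0$, depending only on $f$, with $f(u)\ge\frac{\mu}{2}(u-a_0)$ for $u\in[a_0,a_0+\rho]$; shrinking $\rho$ we may assume $\rho<\frac13(a_+-a_0)$, and we set $\eta_0:=\frac13(a_+-a_0)\in(0,a_+-a_0)$, so that $a_0+\rho<a_+-\eta$ for every $\eta\in(0,\eta_0)$. Next, for $\xi\in(a_0,a_+)$ uniqueness for the scalar ODE forces $Y(\tau,\xi)\in(a_0,a_+)$ for all $\tau\ge0$, the endpoints being stationary; since the only zeros of $f$ are $a_-<a_0<a_+$ and $f>0$ immediately to the right of $a_0$ and immediately to the left of $a_+$, one gets $f>0$ on all of $(a_0,a_+)$, so $\tau\mapsto Y(\tau,\xi)$ is strictly increasing and never drops below a level it has reached. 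For $\varepsilon$ small, $\varepsilon^\alpha<a_+-a_0-\eta$, so $[a_0+\varepsilon^\alpha,a_+-\eta]\subset(a_0,a_+)$ is nonempty.

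For the escape phase, fix $\xi\in[a_0+\varepsilon^\alpha,a_0+\rho]$ and let $\tau^\ast$ be the first time $Y(\tau,\xi)=a_0+\rho$ (finite, since $\dot Y\ge\frac{\mu}{2}\varepsilon^\alpha$ while $Y\le a_0+\rho$). On $[0,\tau^\ast)$ we have $Y\in[a_0,a_0+\rho]$, hence $\frac{d}{d\tau}(Y-a_0)=f(Y)\ge\frac{\mu}{2}(Y-a_0)$, and Gronwall's inequality gives $Y(\tau,\xi)-a_0\ge(\xi-a_0)e^{\mu\tau/2}\ge\varepsilon^\alpha e^{\mu\tau/2}$; evaluating at $\tau^\ast$ yields $\tau^\ast\le\frac{2}{\mu}\log(\rho\varepsilon^{-\alpha})\le C_1|\log\varepsilon|$ for $\varepsilon$ small, with $C_1:=\frac{2\alpha}{\mu}+1$ depending only on $\alpha$ and $f$. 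Together with monotonicity and the trivial bound $Y(\tau,\xi)\ge\xi\ge a_0+\rho$ for $\xi\in[a_0+\rho,a_+-\eta]$, this gives $Y(C_1|\log\varepsilon|,\xi)\ge a_0+\rho$ for every $\xi\in[a_0+\varepsilon^\alpha,a_+-\eta]$.

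For the transport phase, $f$ attains a positive minimum $c_\eta>0$ on the compact set $[a_0+\rho,a_+-\eta]\subset(a_0,a_+)$, so while $a_0+\rho\le Y\le a_+-\eta$ one has $\dot Y=f(Y)\ge c_\eta$ and the orbit crosses $a_+-\eta$ within an extra time $T_\eta:=(a_+-a_0)/c_\eta$, a constant depending only on $f$ and $\eta$. Hence, with $C:=C_1+1$, monotonicity yields $Y(C|\log\varepsilon|,\xi)\ge a_+-\eta$ for all $\xi\in[a_0+\varepsilon^\alpha,a_+-\eta]$ as soon as $|\log\varepsilon|\ge T_\eta$, i.e. for all sufficiently small $\varepsilon$, and $C$ depends only on $\alpha$ and $f$. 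The only point requiring care is this bookkeeping: a single budget $C|\log\varepsilon|$ must serve all $\xi$ at once — which works because the slowest orbit is the one from $a_0+\varepsilon^\alpha$ — and $C$ must stay independent of $\eta$ — which works because the $\eta$-dependent transport phase costs only $O(1)$, absorbed into the diverging $|\log\varepsilon|$. Beyond this the argument is a routine comparison for a scalar autonomous ODE, in the spirit of the generation estimates in \cite{ham}.
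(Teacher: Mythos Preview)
Your argument is correct and follows the same underlying idea as the paper --- exponential repulsion from the unstable equilibrium $a_0$ via comparison with the linearized flow --- but the implementation differs. The paper reduces to the single orbit $Y(\tau,a_0+\varepsilon^\alpha)$ by monotonicity and then invokes Corollary~3.5 of \cite{ham} to obtain a one-phase bound $Y(\tau,a_0+\varepsilon^\alpha)-a_0\ge C_1(\eta)e^{\mu\tau}\varepsilon^\alpha$ valid while the orbit remains in $(a_0,a_+-\eta]$; solving for the hitting time yields $C=\tilde\alpha/\mu$ for any $\tilde\alpha>\alpha$. You instead work self-contained, using only $f(u)\ge\frac{\mu}{2}(u-a_0)$ on a fixed $\rho$-neighborhood of $a_0$ and then a crude $f\ge c_\eta$ bound on the compact remainder; this costs you a factor of two in the exponential rate and an additive $O(1)$ piece, producing $C=\frac{2\alpha}{\mu}+2$.

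For the lemma as stated both are fine. The trade-off is that your proof is elementary and avoids the external reference, while the paper's sharper constant $C\sim\alpha/\mu$ is exactly what is needed downstream: in Proposition~\ref{thm32} the constants of Lemmas~\ref{lem31} and \ref{lem32} are summed to give $C_1=\frac{\tilde\alpha}{\mu}+\frac{\tilde\kappa}{p}$, and this $C_1$ must later satisfy the constraint $C_1<\frac{1}{\mu}$ in (\ref{const}) and Proposition~\ref{pro21}. Your constant would violate that constraint, so while your proof establishes the lemma, the paper's sharper version is not gratuitous.
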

\begin{proof}
First, we take $\eta_0 \in (0, a_+ - a_0)$ small enough and fix $\eta \in (0,\eta_0 )$. We explain about $\eta _0$ in the proof of next lemma. Since the solutions $Y(\tau,\xi)$ are larger than $Y(\tau,a_0 + \varepsilon ^\alpha)$ for all $\xi \in (a_0 + \varepsilon ^\alpha ,a_+ -\eta]$, the conclusion follows once we can show it for $Y(\tau,a_0 + \varepsilon ^\alpha)$. Corollary 3.5 in \cite{ham} implies that there exists a positive constant $C_1(\eta)>0$ such that
\begin{align}
C_1(\eta) e^{\mu \tau} \varepsilon ^\alpha \leq Y(\tau,a_0+ \varepsilon ^\alpha) -a_0 \nonumber
\end{align}
for $\tau >0$ where $Y(\tau,\varepsilon ^\alpha)$ remains in $(a_0 ,a_+ -\eta]$. An inequality $C_1(\eta) e^{\mu \tau} \varepsilon ^\alpha \geq a_+ -a_0 -\eta$ implies that
\begin{align}
\tau \geq \frac{\alpha}{\mu} |\log \varepsilon |+\frac{1}{\mu} \log \frac{a_+ -a_0 -\eta}{C_1(\eta)} . \nonumber
\end{align}
And thus, if we take $\frac{\tilde{\alpha}}{\mu}$ as the constant $C$ for small $\tilde{\alpha} >\alpha$ and take $\varepsilon >0$ sufficiently small, this lemma is proven.
\qed \end{proof}

\begin{lemma}
\label{lem32}
There exists a constant $\eta_0 \in (0, a_+ - a_0)$ such that, for any $\eta \in (0,\eta_0 )$ and $\kappa >0$, there exists a positive constant $C>0$ and we have that
\begin{align}
Y(C|\log \varepsilon | ,\xi )\geq a_+ -\varepsilon ^\kappa \ \ \ (for\ all\ \xi \in [a_+ -\eta , a_+ - \varepsilon ^\kappa]) \nonumber
\end{align}
for sufficiently small $\varepsilon >0$. The constant $C$ can be taken depending only on $\kappa$ and $f$.
\end{lemma}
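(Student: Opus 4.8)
The plan is to reduce the statement to its worst case $\xi = a_+ - \eta$ via the monotone dependence of solutions of $\dot Y = f(Y)$ on their initial data, and then to use that $a_+$ is a linearly stable equilibrium (because $f'(a_+) = -p < 0$) to obtain exponential attraction at rate $\approx p$. This is the ``stable'' counterpart of Lemma \ref{lem31}, where one had to escape a neighborhood of the unstable point $a_0$; here one instead enters a neighborhood of the stable point $a_+$, so the dynamics are contracting and a direct Gronwall argument replaces the use of Corollary 3.5 of \cite{ham}.

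First I would fix $\eta_0$. Since $f\in C^2$ with $f(a_+)=0$ and $f'(a_+)=-p$, a Taylor expansion gives $f(u) = p(a_+-u) + O\big((a_+-u)^2\big)$ as $u\uparrow a_+$, so there is $\eta_0\in(0,a_+-a_0)$ with
\[
f(u)\ \ge\ \tfrac{p}{2}\,(a_+-u)\qquad\text{for all }u\in[a_+-\eta_0,a_+].
\]
We may shrink $\eta_0$ further so that it also serves in Lemma \ref{lem31} (whose proof only requires $\eta_0$ small). Note that $f>0$ on $(a_0,a_+)$: it has no zero there and is positive just to the left of $a_+$; hence every trajectory $Y(\cdot,\xi)$ with $\xi\in(a_0,a_+)$ is strictly increasing and confined to $(a_0,a_+)$.

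Next, fix $\eta\in(0,\eta_0)$ and $\kappa>0$. By the comparison principle for scalar ODEs, $\xi\mapsto Y(\tau,\xi)$ is nondecreasing, so for $\xi\in[a_+-\eta,a_+-\varepsilon^\kappa]$ it suffices to bound $Y(\tau,a_+-\eta)$ from below (for the other endpoint $\xi=a_+-\varepsilon^\kappa$ one already has $Y(0,\xi)=a_+-\varepsilon^\kappa$ and $Y$ increasing in $\tau$). Writing $Z(\tau):=a_+-Y(\tau,a_+-\eta)\ge 0$, we have $Z(0)=\eta\le\eta_0$, $Z$ is nonincreasing, and as long as $Z(\tau)\in[0,\eta_0]$,
\[
\dot Z(\tau)\ =\ -f(a_+-Z(\tau))\ \le\ -\tfrac{p}{2}\,Z(\tau).
\]
Because $Z$ starts in $[0,\eta_0]$ and is nonincreasing, this bound holds for all $\tau\ge0$, so by Gronwall's inequality $Z(\tau)\le \eta_0\, e^{-p\tau/2}$.

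Finally, I would take $C:=\tfrac{2\kappa}{p}+1$, which depends only on $\kappa$ and $f$ (through $p=-f'(a_+)$). Then $Z\big(C|\log\varepsilon|\big)\le \eta_0\,\varepsilon^{pC/2}=\eta_0\,\varepsilon^{\kappa+p/2}\le \varepsilon^\kappa$ for all sufficiently small $\varepsilon>0$, i.e. $Y\big(C|\log\varepsilon|,a_+-\eta\big)\ge a_+-\varepsilon^\kappa$, which together with the monotonicity in $\xi$ gives the claim. I do not expect a serious obstacle: the only points needing care are checking that the trajectory never leaves the linearization window $[a_+-\eta_0,a_+)$ (immediate from the monotonicity of $Z$) and arranging that the chosen $\eta_0$ is simultaneously compatible with Lemma \ref{lem31}.
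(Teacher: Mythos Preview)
Your proof is correct and follows essentially the same strategy as the paper: reduce to the worst initial datum $\xi=a_+-\eta$ by monotonicity, then exploit the linear stability $f'(a_+)=-p<0$ to get exponential contraction toward $a_+$. The only cosmetic difference is that the paper obtains the linear lower bound on $f$ near $a_+$ via a case split on the sign of $f''$ (secant versus tangent), whereas your Taylor bound $f(u)\ge \tfrac{p}{2}(a_+-u)$ is simpler and avoids the case distinction at the price of a slightly larger constant $C$.
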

\begin{proof}
From the same observation as the proof of Lemma \ref{lem31}, we only consider the solution $Y(\tau ,a_+ -\eta)$. We take small $\eta_0 \in (0, a_+ - a_0)$ such that the sign of the derivative $f''(u)$ does not change on $u\in [a_+ -\eta _0 ,a_+ )$, and fix $\eta \in (0,\eta_0 )$. At first, we consider the case that $f''(u) \leq 0$ on $[a_+ -\eta ,a_+)$. The inequality $f(u) \geq -\frac{f(a_+ -\eta)}{\eta}(u-a_+ )$ on $u\in [a_+ -\eta ,a_+ )$ and an easy computation gives us
\begin{align}
Y(\tau ,a_+ -\eta ) \geq a_+ -\eta e^{-\frac{f(a_+ -\eta)}{\eta} \tau} \nonumber
\end{align}
for all $\tau >0$. Reminding that $f'(a_+ )$ is negative and $f''(u) \leq 0$ on $\in [a_+ -\eta ,a_+ )$, we can show that the inequality
\begin{align}
\tau \geq \frac{1}{f'(a_+)}\{ \kappa |\log \varepsilon |+ \log \eta \} \geq -\frac{\eta}{f(a_+-\eta)}\{ \kappa |\log \varepsilon |+ \log \eta \}  \nonumber
\end{align}
implies that $a_+ -\eta e^{-\frac{f(a_+ -\eta)}{\eta} \tau} \geq a_+ -\varepsilon ^\kappa$. We can show this lemma by taking $C= \frac{\tilde{\kappa}}{p}$ for small $\tilde{\kappa} > \kappa$. The case that $f''(u) \geq 0$ on $\in [a_+ -\eta ,a_+)$ is easier than another one, because of the estimate $f(u) \geq -p(u-a_+)$ on $u\in [a_+-\eta ,a_+)$. The same argument as above gives us the same estimate and this completes the proof of this lemma.
\qed \end{proof}

Combining Lemma \ref{lem31} and \ref{lem32}, we obtain a useful estimate as following. We need this estimate when we connect the generation and motion of interface.

\begin{proposition}
\label{thm32}
For each $\alpha >0$ and $\kappa>0$, there exists a positive constant $C>0$ such that
\begin{align}
|Y(C_1 |\log \varepsilon | ,\xi )- a_+ |\leq \varepsilon ^\kappa \ \ \ for\ all\ \xi \in [a_0 + \varepsilon ^\alpha ,2C_0] \nonumber
\end{align}
for sufficiently small $\varepsilon >0$. The constant $C$ can be taken depending only on $\alpha$, $\kappa$ and $f$.
\end{proposition}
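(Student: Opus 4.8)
The plan is to decompose the interval $[a_0+\varepsilon^\alpha, 2C_0]$ into finitely many regions on which either Lemma \ref{lem31} or Lemma \ref{lem32} (or an elementary direct estimate) applies, and then to glue the resulting time bounds together using the flow property $Y(\tau+\sigma,\xi)=Y(\sigma,Y(\tau,\xi))$. The structural facts I would invoke repeatedly are: (a) by the bistable shape of $f$ we have $f>0$ on $(a_0,a_+)$ and, by (\ref{reaction2})(v), $f\le 0$ on $[a_+,\infty)$, so for $\xi\in(a_0,a_+)$ the map $\tau\mapsto Y(\tau,\xi)$ is nondecreasing and stays in $(a_0,a_+)$, while for $\xi\ge a_+$ it is nonincreasing and stays in $[a_+,\xi]$; and (b) by comparison for scalar autonomous ODEs, $Y(\tau,\cdot)$ is monotone increasing in the initial datum. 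In particular, once a trajectory enters the band $[a_+-\varepsilon^\kappa, a_+ +\varepsilon^\kappa]$ it never leaves it, so it suffices to exhibit, for each starting region, a time of the form $c|\log\varepsilon|$ by which the trajectory has entered this band; taking $C$ to be the maximum of the finitely many constants $c$ then gives the uniform statement.

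Concretely, fix $\eta\in(0,\eta_0)$ with $\eta_0$ chosen smaller than the two thresholds produced by Lemmas \ref{lem31} and \ref{lem32}. For $\xi\in[a_0+\varepsilon^\alpha,a_+-\eta]$, Lemma \ref{lem31} supplies a constant $C_1$ (depending only on $\alpha$ and $f$) with $Y(C_1|\log\varepsilon|,\xi)\in[a_+-\eta,a_+)$; restarting the flow at this point, which is either already in $[a_+-\varepsilon^\kappa,a_+)$ (in which case it remains there) or in $[a_+-\eta,a_+-\varepsilon^\kappa]$, and applying Lemma \ref{lem32} with constant $C_2$ (depending only on $\kappa$ and $f$) in the latter case, one gets $Y((C_1+C_2)|\log\varepsilon|,\xi)\in[a_+-\varepsilon^\kappa,a_+)$. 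For $\xi\in[a_+-\eta,a_+-\varepsilon^\kappa]$, Lemma \ref{lem32} applies directly and gives the band after time $C_2|\log\varepsilon|$. For $\xi\in(a_+-\varepsilon^\kappa,a_+]$ there is nothing to do, since by (a) the trajectory stays in $[a_+-\varepsilon^\kappa,a_+]$ for all $\tau\ge0$.

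The only region not handled by the two lemmas is $\xi\in(a_+,2C_0]$, and there I would argue directly: by (\ref{reaction2})(v), $\dot Y=f(Y)\le -p(Y-a_+)$ as long as $Y\ge a_+$, hence by Gronwall $0\le Y(\tau,\xi)-a_+\le(\xi-a_+)e^{-p\tau}\le 2C_0 e^{-p\tau}$, which is $\le\varepsilon^\kappa$ once $\tau\ge\frac{\kappa}{p}|\log\varepsilon|+\frac{1}{p}\log(2C_0)$; so any $C_3>\kappa/p$ works for small $\varepsilon$, and the trajectory stays in $[a_+,a_++\varepsilon^\kappa]$ thereafter. Setting $C:=(C_1+C_2)\vee C_3$ and using the monotonicity in $\tau$ of the flow on each side of $a_+$ to see that every trajectory is still inside $[a_+-\varepsilon^\kappa,a_++\varepsilon^\kappa]$ at the common time $C|\log\varepsilon|$ completes the argument. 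I do not expect a serious obstacle: the substantive work is already in Lemmas \ref{lem31}–\ref{lem32}, and what remains is the bookkeeping of the gluing together with the elementary estimate above $a_+$; the single point needing a little care is making the final time uniform in $\xi$, which is exactly what monotonicity of the flow in both time and initial condition provides.
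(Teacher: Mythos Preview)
Your proposal is correct and follows essentially the same approach as the paper: split $[a_0+\varepsilon^\alpha,2C_0]$ into the regions below $a_+-\eta$, between $a_+-\eta$ and $a_+$, and above $a_+$, apply Lemmas \ref{lem31} and \ref{lem32} to the first two, use the linear bound $f(u)\le -p(u-a_+)$ from (\ref{reaction2})(v) for the third, and glue via the flow property. The paper arrives at the explicit constant $C_1=\frac{\tilde\alpha}{\mu}+\frac{\tilde\kappa}{p}$ (with $\tilde\alpha>\alpha$, $\tilde\kappa>\kappa$), which is just your $(C_1+C_2)\vee C_3$ made concrete; your explicit invocation of monotonicity in $\xi$ and in $\tau$ to justify the uniform final time is the only point you spell out more carefully than the paper does.
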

\begin{proof}
From the condition (v) of (\ref{reaction2}), we have that $f(u)\leq -p(u-a_+)$ on $u\in [a_+, 2C_0]$. And thus the similar argument to the proof of Lemma \ref{lem32} gives us
\begin{align}
Y(C|\log \varepsilon | ,\xi ) \leq a_+ +\varepsilon ^\kappa \ \ \ for\ all\ \xi \in [a_+ ,2C_0] \nonumber
\end{align}
if we take $C=\frac{\tilde{\kappa}}{p}$ for $\tilde{\kappa} > \kappa$. If we set $\tilde{\kappa} > \kappa$ and $\tilde{\alpha} > \alpha$, the solution $Y$ started from $[a_0+ \varepsilon ^\alpha ,a_+ -\eta]$ becomes larger than $a_+ -\eta$ until the time $t=\frac{\tilde{\alpha}}{\mu} |\log \varepsilon|$ from Lemma \ref{lem31}, and the solution started form $[a_0 - \eta ,2C_0]$ goes into $[a_0 - \varepsilon ^\kappa ,a_0 + \varepsilon ^\kappa]$ until the time $t=\frac{\tilde{\kappa}}{p} |\log \varepsilon|$ from Lemma \ref{lem32}. And thus, we can prove this proposition if we take $C_1 =\{ \frac{\tilde{\alpha}}{\mu} + \frac{\tilde{\kappa}}{p} \}$ for $\tilde{\kappa} > \kappa$ and $\tilde{\alpha} > \alpha$.
\qed \end{proof}

We can obtain the similar estimate to that of Proposition \ref{thm32} in the case that $\xi \in [-2C_0 ,a_0 -\varepsilon ^\alpha ]$. We state this below.

\begin{proposition}
\label{thm33}
For each $\alpha >0$ and $\kappa>0$, there exists a positive constant $C_1>0$ such that
\begin{align}
|Y(C_1 |\log \varepsilon | ,\xi ) -a_- |\leq \varepsilon ^\kappa \ \ \ for\ all\ \xi \in [-2C_0 ,a_0 -\varepsilon ^\alpha ] \nonumber
\end{align}
for sufficiently small $\varepsilon >0$. Especially the constant $C_1$ depends only on $\alpha$, $\kappa$ and $f$.
\end{proposition}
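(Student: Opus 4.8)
The statement is the exact mirror image of Proposition \ref{thm32} under the reflection $u \mapsto -u$, so the plan is to deduce it from Proposition \ref{thm32} rather than to redo the estimates. Set $\tilde{f}(u) := -f(-u)$ and let $\tilde{Y}(\tau,\zeta)$ be the solution of $\dot{\tilde{Y}} = \tilde{f}(\tilde{Y})$, $\tilde{Y}(0,\zeta) = \zeta$. Since $Y$ solves $\dot{Y} = f(Y)$, a direct check (both sides solve the same ODE with the same initial datum) gives $\tilde{Y}(\tau,\zeta) = -Y(\tau,-\zeta)$. Hence it suffices to apply Proposition \ref{thm32} to $\tilde{f}$ and then unwind the substitution $\zeta = -\xi$, $\tilde{Y} = -Y$.

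First I would verify that $\tilde{f}$ satisfies (\ref{reaction2}) with the triple $a_- < a_0 < a_+$ replaced by $-a_+ < -a_0 < -a_-$. The zeros of $\tilde{f}$ are $-a_\pm,\, -a_0$, so (i) holds; since $\tilde{f}'(u) = f'(-u)$ we get $\tilde{f}'(-a_\pm) = f'(a_\pm) = -p < 0$ and $\tilde{f}'(-a_0) = f'(a_0) = \mu > 0$, so (ii) and (iv) hold; (iii) is immediate from $|\tilde{f}(u)| = |f(-u)|$. For condition (v) of (\ref{reaction2}) applied to $\tilde{f}$ at its largest stable zero $-a_-$: if $u \geq -a_-$ then $-u \leq a_-$, so condition (vi) for $f$ gives $f(-u) \geq -p(-u - a_-)$, whence $\tilde{f}(u) = -f(-u) \leq -p(u - (-a_-))$. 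Symmetrically, condition (vi) for $\tilde{f}$ at $-a_+$ follows from condition (v) for $f$. Finally $[-a_+, -a_-] \subset [-2C_0, 2C_0]$ because $[a_-, a_+] \subset [-2C_0, 2C_0]$, so all the standing assumptions of this section hold for $\tilde{f}$.

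Then I would apply Proposition \ref{thm32} to $\tilde{f}$: for each $\alpha, \kappa > 0$ there is $C_1 > 0$, depending only on $\alpha$, $\kappa$ and $\tilde{f}$ (hence only on $\alpha$, $\kappa$ and $f$), such that $|\tilde{Y}(C_1 |\log \varepsilon|, \zeta) - (-a_-)| \leq \varepsilon^\kappa$ for all $\zeta \in [-a_0 + \varepsilon^\alpha, 2C_0]$ and all sufficiently small $\varepsilon > 0$. Substituting $\tilde{Y}(\tau,\zeta) = -Y(\tau,-\zeta)$ and setting $\xi := -\zeta$, which ranges over $[-2C_0, a_0 - \varepsilon^\alpha]$ as $\zeta$ ranges over $[-a_0 + \varepsilon^\alpha, 2C_0]$, yields $|Y(C_1 |\log \varepsilon|, \xi) - a_-| \leq \varepsilon^\kappa$, which is the assertion.

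There is essentially no obstacle: the only step carrying real content is checking that conditions (v) and (vi) of (\ref{reaction2}) are reflections of one another, which is precisely why the one-sided barrier hypotheses were imposed in that form. An alternative, slightly longer route that avoids introducing $\tilde{f}$ is to reprove Lemmas \ref{lem31} and \ref{lem32} and Proposition \ref{thm32} verbatim with $a_+$ replaced by $a_-$ and the inequalities reversed, using the linearization $f'(a_0) = \mu > 0$ at the unstable zero (for the analogue of Lemma \ref{lem31}, via the mirror of Corollary 3.5 of \cite{ham}), and the barrier $f(u) \geq -p(u - a_-)$ from condition (vi) for $u \leq a_-$ together with the fixed sign of $f''$ near $a_-$ (for the analogues of Lemma \ref{lem32} and Proposition \ref{thm32}); one again arrives at a constant of the form $C_1 = \tilde{\alpha}/\mu + \tilde{\kappa}/p$ for suitable $\tilde{\alpha} > \alpha$ and $\tilde{\kappa} > \kappa$.
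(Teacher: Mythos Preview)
Your proof is correct. The paper itself gives no argument for this proposition beyond the sentence ``We can obtain the similar estimate to that of Proposition~\ref{thm32} in the case that $\xi \in [-2C_0, a_0 - \varepsilon^\alpha]$,'' so the intended route is your alternative one: rerun Lemmas~\ref{lem31}--\ref{lem32} and Proposition~\ref{thm32} with $a_+$ replaced by $a_-$, inequalities reversed, and condition (vi) of (\ref{reaction2}) in place of (v). Your primary argument via the reflection $\tilde f(u) = -f(-u)$ is a cleaner way to package exactly the same content: it makes explicit why the pair of one-sided barrier hypotheses (v) and (vi) in (\ref{reaction2}) is what is needed, and it shows in one stroke that the constant $C_1$ can be taken identical to the one in Proposition~\ref{thm32} (with the same dependence on $\alpha$, $\kappa$, $f$), rather than leaving this to a parallel computation. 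Either route yields the same constant $C_1 = \tilde\alpha/\mu + \tilde\kappa/p$.
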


\subsection{Construction of super and sub solutions}
We set
\begin{align}
w_\varepsilon ^\pm (t,x) = Y \left( \frac{t}{\varepsilon} , u_0 ^\varepsilon (x) \pm \varepsilon  h(x) (e^\frac{\mu t}{\varepsilon }-1 ) \right) \nonumber
\end{align}
for a bounded positive function $h(x)\in C_b ^2 (\mathbb{R})$ which satisfies
\begin{align}
\label{condh}
\begin{cases}
{\rm (i)}\ \mu h \geq \{ u_0 ^{\varepsilon \prime} +\varepsilon h' (e^{\frac{\mu t}{\varepsilon}} -1) \} ^2\ for\ all\ t\in [0,C_1\varepsilon |\log \varepsilon|]\ and\ x\in \mathbb{R}, \\
{\rm (ii)}\ \mu h \geq \{ u _0 ^{\varepsilon \prime} +\varepsilon h' (e^{\frac{\mu t}{\varepsilon}} -1) \} ^2 + \{ \Delta u _0 ^{\varepsilon} + \varepsilon \Delta h (e^{\frac{\mu t}{\varepsilon}} -1) \}\\
\ \ \ \ \ \ \ \ \ \ \ \ \ \ \ \ \ \ \ \ \ \ \ \ \ \ \ \ \ \ \ \ \ \ \ \ \ \ \ \ for\ all\ t\in [0,C_1\varepsilon |\log \varepsilon|]\ and\ x\in \mathbb{R}, \\
{\rm (iii)}\ \varepsilon ^\kappa C_\mu \exp(-\frac{\sqrt{\mu} x}{2}) + h(x) ( \varepsilon ^{1-C_1 \mu}-\varepsilon ) \leq C \varepsilon^\kappa \exp (-\frac{\sqrt{\mu} x}{2}) \ for\ all\ x \geq K, \\
{\rm (iv)}\ \varepsilon ^\kappa C_\mu \exp(\frac{\sqrt{\mu} x}{2}) + h(x) ( \varepsilon ^{1-C_1 \mu}-\varepsilon ) \leq C \varepsilon^\kappa \exp(\frac{\sqrt{\mu} x}{2}) \ for\ all\ x \leq -K, \\
{\rm (v)}\ \lim _{\varepsilon \to 0} ( \varepsilon ^{1-C_1 \mu}-\varepsilon ) \| h \|_\infty =0 ,
\end{cases}
\end{align}
for some constants $C>0$, $K>1$ and $\varepsilon _0 >0$, and for any $\varepsilon \in (0,\varepsilon _0]$. The constant $\mu$ is introduced in (\ref{eq:ini}). We need to construct the function $h$ satisfying (\ref{condh}).

\begin{lemma}
\label{consth}
There exists a function $h \in C_b ^2 (\mathbb{R})$ which satisfies (\ref{condh}).
\end{lemma}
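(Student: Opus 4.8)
The plan is to construct $h$ by hand as a constant multiple of a fixed, smooth, even profile which is flat near the origin and then matches the exponential tail $e^{-\frac{\sqrt\mu}{2}|x|}$ appearing in (\ref{eq:ini})(iv)--(v). Concretely, fix $K>1$ and a function $\phi\in C_b^2(\mathbb R)$ with $0<\phi\le 1$, $\phi\equiv 1$ on $[-K,K]$, $\phi(x)=e^{-\frac{\sqrt\mu}{2}(|x|-K)}$ for $|x|\ge K+1$, $\phi$ nonincreasing in $|x|$ on the transition annulus, so that $\|\phi'\|_\infty,\|\phi''\|_\infty\le C(\mu,K)$. Put $h:=\lambda\phi$, where $\lambda>0$ is a large constant to be fixed below, depending only on $C_0$, $\mu$ and $f$. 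Throughout the verification I would use the single elementary bound $\varepsilon(e^{\mu t/\varepsilon}-1)\le \varepsilon^{1-C_1\mu}-\varepsilon$ on $t\in[0,C_1\varepsilon|\log\varepsilon|]$, together with the fact that $\varepsilon^{1-C_1\mu}-\varepsilon\to 0$ as $\varepsilon\to0$; the latter holds once $C_1\mu<1$, a restriction we may impose since, by Propositions \ref{thm32}--\ref{thm33}, the horizon constant $C_1$ depends only on $f$ and on the freely chosen, small parameters $\alpha$ and the target precision.

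To verify (i) and (ii) I would split $\mathbb R$ into the flat part $|x|\le K$ and the tail $|x|\ge K$. On $|x|\le K$ we have $h\equiv\lambda$, hence $h'=h''=0$, so the right-hand sides of (i) and (ii) collapse to $(u_0^{\varepsilon\prime})^2$ and $(u_0^{\varepsilon\prime})^2+\Delta u_0^\varepsilon$, both bounded by $C_0^2+C_0$ thanks to (\ref{eq:ini})(i); it then suffices to take $\lambda\ge(C_0^2+C_0)/\mu$ (enlarged by a further $f$-dependent factor if a bound on $Y_{\xi\xi}/Y_\xi$ is inserted when one later checks that $w_\varepsilon^\pm$ are genuine super and sub solutions). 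On the tail $|x|\ge K>1$, estimate (\ref{eq:ini})(iv)--(v) gives $|u_0^{\varepsilon\prime}|,|\Delta u_0^\varepsilon|\le\varepsilon^\kappa C_\mu e^{-\frac{\sqrt\mu}{2}|x|}$, while $|\varepsilon h'(e^{\mu t/\varepsilon}-1)|$ and $|\varepsilon\Delta h(e^{\mu t/\varepsilon}-1)|$ are bounded by $C(\mu,K)\lambda(\varepsilon^{1-C_1\mu}-\varepsilon)e^{-\frac{\sqrt\mu}{2}|x|}$; since $\mu h=\mu\lambda\phi\ge c(\mu,K)\lambda e^{-\frac{\sqrt\mu}{2}|x|}$ and $e^{-\sqrt\mu|x|}\le e^{-\frac{\sqrt\mu}{2}|x|}$, after cancelling the common factor $e^{-\frac{\sqrt\mu}{2}|x|}$ both (i) and (ii) reduce to one inequality whose left side is $O\big((\varepsilon^\kappa+\lambda(\varepsilon^{1-C_1\mu}-\varepsilon))^2+\varepsilon^\kappa+\lambda(\varepsilon^{1-C_1\mu}-\varepsilon)\big)\to 0$, hence holds for all small $\varepsilon$.

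For (iii)--(iv), on $|x|\ge K$ one has $h(x)=\lambda\phi(x)\le B\,e^{-\frac{\sqrt\mu}{2}|x|}$ with $B:=\lambda e^{\frac{\sqrt\mu}{2}(K+1)}$, so it suffices that $\varepsilon^\kappa C_\mu+B(\varepsilon^{1-C_1\mu}-\varepsilon)\le C\varepsilon^\kappa$; fixing $C>C_\mu$, this is an elementary comparison of powers of $\varepsilon$, valid for small $\varepsilon$ provided $C_1$ (equivalently, the time horizon) is taken small enough relative to $\mu$ and $\kappa$ that $\varepsilon^{1-C_1\mu}-\varepsilon$ decays at least as fast as $\varepsilon^\kappa$ — again legitimate because the constants in Propositions \ref{thm32}--\ref{thm33} depend only on $f$ and on freely chosen parameters, and finally by shrinking $\varepsilon_0$. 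Condition (v) is immediate since $\|h\|_\infty=\lambda<\infty$ and $\varepsilon^{1-C_1\mu}-\varepsilon\to0$.

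The only genuine work is the power-counting in the previous paragraph: one must arrange that the horizon constant $C_1$ coming from the ODE estimates of Section \ref{sec2-1}, the decay rate $\frac{\sqrt\mu}{2}$ baked into $\phi$, and the precision exponent $\kappa$ are mutually compatible, which is precisely why the constants in Propositions \ref{thm32} and \ref{thm33} were tracked as depending only on $\alpha$, $\kappa$ and $f$. A minor secondary point is the $C^2$-matching of $\phi$ on $K\le|x|\le K+1$: there the derivative terms $\varepsilon h'(e^{\mu t/\varepsilon}-1)$ are of size $O(\lambda(\varepsilon^{1-C_1\mu}-\varepsilon))$ and hence negligible against $\mu h\ge\mu\lambda e^{-\sqrt\mu/2}$, so the interpolation does not spoil (i) or (ii).
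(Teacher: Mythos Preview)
There is a genuine gap in your verification of conditions (iii)--(iv) of (\ref{condh}), and it cannot be repaired within your $\varepsilon$-independent ansatz $h=\lambda\phi$.

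With your choice, for $|x|\ge K+1$ you have $h(x)=\lambda e^{\frac{\sqrt\mu}{2}K}\,e^{-\frac{\sqrt\mu}{2}|x|}$, so (iii) reads (after cancelling the common exponential)
\[
\varepsilon^\kappa C_\mu + \lambda e^{\frac{\sqrt\mu}{2}K}\bigl(\varepsilon^{1-C_1\mu}-\varepsilon\bigr)\ \le\ C\varepsilon^\kappa .
\]
Since $\lambda,K$ are fixed positive constants, this forces $\varepsilon^{1-C_1\mu}=O(\varepsilon^\kappa)$, i.e.\ $1-C_1\mu\ge\kappa$. But throughout the paper $\kappa>1$ (see (\ref{eq:ini}) and (\ref{const})), while $C_1>0$, so $1-C_1\mu<1<\kappa$ always. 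Your proposed fix, ``take $C_1$ small enough,'' would require $C_1\le(1-\kappa)/\mu<0$; moreover $C_1$ is \emph{not} a free parameter here: by Proposition \ref{thm32} it is bounded below by $\frac{\tilde\alpha}{\mu}+\frac{\tilde\kappa}{p}$ with $\tilde\kappa>\kappa>1$, and in (\ref{const}) even $\alpha>\tfrac12$ is imposed. So the power-counting you flag as ``the only genuine work'' in fact fails.

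The paper's construction avoids this precisely by letting $h$ depend on $\varepsilon$: it writes $h=\varphi+\varepsilon^\kappa\psi$, where in the tail $|x|>K$ the piece $\varphi$ decays super-exponentially (like $\exp(-\varepsilon^{-\beta}\,\cdot\,)$, $0<\beta<\tfrac{1-C_1\mu}{2}$) and is therefore negligible against $\varepsilon^\kappa e^{-\sqrt\mu|x|/2}$, while the tail contribution of $h$ is essentially $\varepsilon^\kappa\psi(x)=\varepsilon^\kappa e^{-\sqrt\mu|x|/2}$. Then $h(x)(\varepsilon^{1-C_1\mu}-\varepsilon)\sim\varepsilon^{\kappa+1-C_1\mu}e^{-\sqrt\mu|x|/2}=o(\varepsilon^\kappa e^{-\sqrt\mu|x|/2})$, and (iii)--(iv) follow. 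The price is that $\|h\|_\infty\sim\varepsilon^{-\beta}$ on $[-1,1]$, which is exactly why condition (v) is stated as a \emph{limit} rather than a uniform bound, and why the interior construction of $\varphi$ (connecting a large plateau to the fast-decaying tail across $[-K,-1]\cup[1,K]$) is somewhat delicate. Your checks of (i), (ii) and (v) are fine for your $h$; the obstruction is solely in (iii)--(iv), and it forces $h$ to carry an explicit factor of $\varepsilon^\kappa$ in the tails.
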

\begin{proof}
For simplicity, we set $a_\varepsilon =\varepsilon (e^{\frac{\mu t}{\varepsilon}} -1)$. Note that $0< a_\varepsilon < \varepsilon ^{1-C_1 \mu}-\varepsilon$ if $t\in [0, C_1 \varepsilon |\log \varepsilon |)$. We can divide the initial value as $u_0 ^\varepsilon =\tilde{u}_0 ^\varepsilon + \varepsilon ^\kappa g$ where supp$\tilde{u}_0 ^\varepsilon \subset [-1,1]$, $|g| + |g'| + |g''| \leq C_\mu \exp (- \frac{\sqrt{\mu} |x|}{2})$ and $\tilde{u}_0 ^\varepsilon$, $g \in C^2(\mathbb{R})$ (see (\ref{eq:ini})). Now we take $h=\varphi +\varepsilon ^\kappa \psi$ where $\varphi$, $\psi \in C_b ^2(\mathbb{R})$ and positive. 

At first, we construct $\varphi$ satisfying;
\begin{align}
\label{condphi1}
&\mu \varphi \geq 4( \tilde{u} _0 ^{\varepsilon \prime} ) ^2 + 4 ( a_\varepsilon \varphi ' ) ^2 ,\\
\label{condphi2}
&\mu \varphi \geq 4( \tilde{u} _0 ^{\varepsilon \prime} ) ^2 + 4 ( a_\varepsilon \varphi ' ) ^2 + \tilde{u} _0 ^{\varepsilon \prime \prime} +a_\varepsilon \varphi '' +\varepsilon ^{\bar{\kappa}} 1_{[-1,1]},
\end{align}
where $1_{[-1,1]}$ is an indicator function and $0 < \bar{\kappa} < \kappa$. We take a constant $K>1$ which does not depend on $\varepsilon$. We set $\varphi(x)= \exp(-\varepsilon ^{-\beta} (x+K))$ when $x<-K$ and $\varphi(x)= \exp(-\varepsilon ^{-\beta} (-x-K))$ when $x>K$, for some $0<\beta <\frac{1-C_1\mu}{2}$. By using conditions $\tilde{u} _0 ^{\varepsilon \prime} =\tilde{u} _0 ^{\varepsilon \prime \prime} =0$ for $|x|>K$ and $a_\varepsilon \varepsilon ^{-\beta} \to 0$ as $\varepsilon \to 0$, the estimates (\ref{condphi1}) and (\ref{condphi2}) are established when $|x|>K$. We can take a constant which is larger than $\frac{4 C_0 ^2 + C_0}{\mu}$ as $\varphi$ on $[-1,1]$. Thus we need to connect $\varphi$ on $[-K ,-1]$ and $[1,K]$. We consider only on $[-K ,-1]$. We take $\varphi ''$ as an linear function on $[-K ,-K + \varepsilon ^{2\beta} ]$ where $\varphi ''(-K + \varepsilon ^{2\beta} ) =0$. Then $\varphi$ becomes a monotonous increasing function on $[-K ,-K + \varepsilon ^{2\beta} ]$, and $\varphi (-K + \varepsilon ^{2\beta} ) =1 + C \varepsilon ^{\beta}$ for some $C>0$. Next, we take a concave function $\varphi(x)$ on $[-K + \varepsilon ^{2\beta} , -1]$ where $\varphi (-1) > \frac{4 C_0 ^2 + C_0}{\mu}$ and $\varphi '(-1)=\varphi ''(-1)=0$ and $\varphi$ is twice differentiable at $x= -K + \varepsilon ^{2\beta}$. For example, we can take $\varphi ''$ equals to some negative constant on $[-K + \varepsilon ^{2\beta} + \delta , -1-\delta]$ for some small $\delta>0$, interpolate on $[-K + \varepsilon ^{2\beta}  , -1] \backslash [-K + \varepsilon ^{2\beta} + \delta , -1-\delta]$ by linear functions and integrate them to get $\varphi$ which satisfies conditions as above. Because $\varphi '' \leq 0$, $\varphi$ is concave, and $\varphi '(x) \leq O(\varepsilon ^\beta)$. Note that $\varphi (-1 )= O(\varepsilon ^{-\beta})$. Combining this and the conditions $a_\varepsilon \varepsilon ^{-\beta} \to 0$ and $\tilde{u} _0 ^{\varepsilon \prime} =\tilde{u} _0 ^{\varepsilon \prime \prime}=0$, we can prove (\ref{condphi1}) and (\ref{condphi2}). We take $\varphi$ symmetrically on $[1,K]$.

Next we construct $\psi$ satisfying;
\begin{align}
\label{condpsi1}
&\varepsilon ^\kappa \mu \psi \geq 4( \varepsilon ^\kappa g' )^2 + 4 ( \varepsilon ^\kappa a_\varepsilon \psi ' ) ^2 ,\\
\label{condpsi2}
&\varepsilon ^\kappa \mu \psi \geq 4( \varepsilon ^\kappa g' )^2 + 4 ( \varepsilon ^\kappa a_\varepsilon \psi ' ) ^2 + \varepsilon ^\kappa \psi '' + \varepsilon ^\kappa a_\varepsilon \psi ''.
\end{align}
When $|x|>1$, we take $\psi (x) = \exp (- \frac{\sqrt{\mu} |x|}{2})$. Then the right-hand side of (\ref{condpsi1}) and the sum of the first, second and fourth terms of the right-hand side of (\ref{condpsi2}) are smaller than $\varepsilon ^\kappa \mu \psi$ for sufficiently small $\varepsilon >0$. The third term of (\ref{condpsi2}) is $\frac{\varepsilon ^\kappa \mu}{4} \psi$. This term is smaller than $\varepsilon ^\kappa \mu \psi $ and larger than $\varepsilon ^\kappa g''$ from the definition of $C_\mu$. We use the condition (iv) and (v) of (\ref{eq:ini}) here. Let us discuss about $|x| \leq 1$. We take $\psi \in C^2(\mathbb{R})$ which is twice differentiable at $x=-1$, $\psi '' (-1+\delta) = 0$ for some $\delta \in (0,1)$ and $\psi ''$ is monotonous decreasing when $x \in [-1,-1+\delta]$. For example, we can take cubic function because we have four conditions for the values of $\psi$, $\psi '$ and $\psi ''$ at $x=-1$ and $x=-1+\delta$. In particular, $\psi$ is positive on $[-1,-1+\delta]$. We can take $\psi$ similarly and symmetrically on $[1-\delta , 1]$. We connect $\psi$ by a concave function on $[-1+\delta , 1-\delta]$ which is twice differentiable at $x=-1+\delta$, $1-\delta$. For example, we can take a quartic function $\psi (x) = ax^4 +bx^2 + c$ for certain $a,b,c \in \mathbb{R}$, because we have six conditions of $\psi$, $\psi '$ and $\psi ''$ and take symmetric function $\psi$. And $\psi$ is positive on $\mathbb{R}$. In a similar way as above, we can show (\ref{condpsi1}) and (\ref{condpsi2}) on $[-1,1]$. We use the conditions $\mu \psi (\pm 1) > \psi '' (\pm1)$, the monotonous decreasing (resp. increasing) of $\psi ''$ on $[-1, -1+\delta]$ (resp. $[1-\delta ,1]$) and $\psi '' <0$ on $[-1+\delta ,1-\delta]$.

To sum up (\ref{condphi1}) and (\ref{condpsi1}), we have that
\begin{align}
\mu h \geq 2(u_0 ^{\varepsilon \prime})^2 + 2 ( \varepsilon ^\kappa a_\varepsilon h ' ) ^2 \geq (u_0 ^{\varepsilon \prime} + \varepsilon ^\kappa a_\varepsilon h ' )^2. \nonumber
\end{align}
Here we use $(a+b)^2 \leq 2a^2 +2b^2$ twice. Similarly, we get
\begin{align}
\mu h \geq \{ u _0 ^{\varepsilon \prime}+a_\varepsilon h' \} ^2 + \{ u_0 ^{\varepsilon \prime \prime} + a_\varepsilon h'' \}, \nonumber
\end{align}
from (\ref{condphi2}) and (\ref{condpsi2}). Note that $\psi '' + \varepsilon ^{\bar{\kappa} -\kappa} 1_{[-1,1]} > g''$ for sufficiently small $\varepsilon$ because of the constant $C_\mu$ and the boundedness of $g''$ on $[-1,1]$. We also use the estimate $\psi '' > g''$ when $|x|>1$, which we show in the previous paragraph. Here we note $\varphi$ and $\varepsilon ^\kappa \psi$ depend on $\varepsilon$, however they are bounded by some constant which diverges in the order $O(\varepsilon ^{-\beta})$ and larger than $4C_0 ^2 +C_0$ for sufficiently small $\varepsilon >0$, from the construction of these function. This and the convergence $a_\varepsilon \varepsilon ^{-\beta} \to 0$ show the condition (v) of (\ref{condh}). We can see (iii) and (iv) from the construction of $\varphi$ and $\psi$.
\qed \end{proof}

Now we prove that $w_\varepsilon ^\pm$ are super and sub solutions for (\ref{eq:pde}) by applying the maximum principle. See (\ref{condh}) for the precise condition of $h$. Our claim in this subsection is formulated in the following proposition.
\begin{proposition}
\label{thm61}
If we fix a constant $0 < C_1 < \frac{1}{\mu}$ and a positive function $h(x)\in C_b ^2 (\mathbb{R})$ which satisfies (\ref{condh}), then there exists $\varepsilon _0 >0$ such that for all $\varepsilon \in (0,\varepsilon_0]$, $t \in [0, C_1 \varepsilon |\log \varepsilon |)$ and $x \in \mathbb{R}$, we have that $w_{\varepsilon} ^- (t,x) \leq u^{\varepsilon} (t,x) \leq w_{\varepsilon } ^+ (t,x)$ where $u^{\varepsilon}$ is the solution of (\ref{eq:pde}).
\end{proposition}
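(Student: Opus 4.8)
The plan is to verify directly that $w_\varepsilon^+$ is a supersolution (the argument for $w_\varepsilon^-$ being symmetric) and then invoke the comparison theorem for the parabolic equation (\ref{eq:pde}), which is available thanks to condition (iv) of (\ref{reaction2}) and the maximum principle as in \cite{fr}. Write $\Phi^+(t,x) := u_0^\varepsilon(x) + \varepsilon h(x)(e^{\mu t/\varepsilon}-1)$ for the argument of $Y$, so that $w_\varepsilon^+(t,x) = Y(t/\varepsilon, \Phi^+(t,x))$. First I would record the basic identities for $Y$: writing $Y_\tau = \partial Y/\partial\tau$ and $Y_\xi = \partial Y/\partial\xi$, we have $Y_\tau = f(Y)$, and differentiating the ODE in $\xi$ gives $\partial_\tau Y_\xi = f'(Y)Y_\xi$ with $Y_\xi(0,\cdot)=1$, hence $Y_\xi = \exp\!\big(\int_0^\tau f'(Y(\sigma,\xi))\,d\sigma\big) > 0$; similarly $\partial_\tau Y_{\xi\xi} = f'(Y)Y_{\xi\xi} + f''(Y)Y_\xi^2$. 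Monotonicity $Y_\xi>0$ is what lets the comparison pass through the nonlinear change of variables.

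Next I would compute the parabolic operator $\mathcal{L}w := \dot w - \Delta w - \tfrac{1}{\varepsilon}f(w)$ applied to $w_\varepsilon^+$. The time derivative gives $\dot w_\varepsilon^+ = \tfrac{1}{\varepsilon}f(Y) + Y_\xi \cdot \partial_t\Phi^+ = \tfrac{1}{\varepsilon}f(Y) + Y_\xi\,\mu h\,e^{\mu t/\varepsilon}$. The Laplacian, by the chain rule, is $\Delta w_\varepsilon^+ = Y_{\xi\xi}(\partial_x\Phi^+)^2 + Y_\xi\,\Delta\Phi^+$, where $\partial_x\Phi^+ = u_0^{\varepsilon\prime} + \varepsilon h'(e^{\mu t/\varepsilon}-1)$ and $\Delta\Phi^+ = \Delta u_0^\varepsilon + \varepsilon\Delta h(e^{\mu t/\varepsilon}-1)$. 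Since $\tfrac1\varepsilon f(Y)$ cancels, we get
\begin{align}
\mathcal{L}w_\varepsilon^+ = Y_\xi\Big\{ \mu h\,e^{\mu t/\varepsilon} - \Delta\Phi^+ \Big\} - Y_{\xi\xi}(\partial_x\Phi^+)^2. \nonumber
\end{align}
To show $\mathcal{L}w_\varepsilon^+\ge 0$ it suffices to handle the sign of $Y_{\xi\xi}$. Where $Y_{\xi\xi}\le 0$ the last term is nonnegative and, using $Y_\xi\le e^{ct/\varepsilon}\le \varepsilon^{-cC_1}$ together with $e^{\mu t/\varepsilon}\ge 1$, the bracket is controlled by condition (ii) of (\ref{condh}) after absorbing the harmless polynomial factor (here one uses $0<C_1<1/\mu$ so all $\varepsilon$-powers stay on the good side); where $Y_{\xi\xi}>0$ one instead estimates $Y_{\xi\xi} \le $ (something like) $\tfrac1\varepsilon\,\tau\,\|f''\|_\infty \cdot Y_\xi^2$ on the relevant range and uses the gradient condition (i) of (\ref{condh}) to dominate $Y_{\xi\xi}(\partial_x\Phi^+)^2$ by $Y_\xi\cdot\mu h\,e^{\mu t/\varepsilon}$. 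In either case $\mathcal{L}w_\varepsilon^+\ge 0$ for $\varepsilon$ small and $t\in[0,C_1\varepsilon|\log\varepsilon|)$. At $t=0$, $w_\varepsilon^+(0,x)=Y(0,u_0^\varepsilon(x))=u_0^\varepsilon(x)=u^\varepsilon(0,x)$, so the initial ordering holds with equality. One also checks the behavior as $x\to\pm\infty$: by conditions (iii)–(v) of (\ref{condh}) combined with (iv)–(v) of (\ref{eq:ini}), $w_\varepsilon^+(t,x)$ stays in the region where $\pm1$ (i.e.\ $a_\pm$) are the relevant stable points and the comparison with $u^\varepsilon(t,\pm\infty)=\pm1$ is respected, so the maximum principle applies on all of $\mathbb{R}$.

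Then $w_\varepsilon^-(t,x)\le u^\varepsilon(t,x)$ follows by the mirror computation: for the subsolution the sign in the bracket flips, and one uses the same conditions (i)–(ii) of (\ref{condh}) with the $\{u_0^{\varepsilon\prime\prime}+a_\varepsilon h''\}$ term now entering with the opposite sign, which is exactly why (\ref{condh}) is stated as a two-sided bound $\mu h\ge\{\cdots\}^2+\{\cdots\}$ rather than an equality — the $\pm$ cases are covered simultaneously. The main obstacle I anticipate is the bookkeeping in the region where $Y_{\xi\xi}>0$: one must get a quantitative bound on $Y_{\xi\xi}$ that is uniform over $\xi\in[-2C_0,2C_0]$ and over $\tau=t/\varepsilon\in[0,C_1|\log\varepsilon|)$, keeping careful track of the factor $e^{ct/\varepsilon}$ coming from $Y_\xi$ versus the factor $e^{\mu t/\varepsilon}$ gained from $\partial_t\Phi^+$; this is precisely the point where the choice $0<C_1<1/\mu$ and the precise $\varepsilon$-powers in (\ref{condh}) (iii)–(v) are needed, and where one should cite the Gronwall-type estimates for $Y$ from \cite{ham} rather than re-deriving them.
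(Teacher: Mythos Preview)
Your overall framework is the same as the paper's: compute $\mathcal{L}w_\varepsilon^+$, show it is nonnegative, and invoke the maximum principle of \cite{fr}. The computation of $\mathcal{L}w_\varepsilon^+$ is correct, and your reduction to controlling the term $Y_{\xi\xi}(\partial_x\Phi^+)^2$ against $Y_\xi\,\mu h\,e^{\mu t/\varepsilon}$ is the right target. However, the actual control of this term is where the argument lives, and your treatment of it does not close.

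The paper does \emph{not} split into cases according to the sign of $Y_{\xi\xi}$. Instead it factors out $Y_\xi>0$ via the ratio $A(\tau,\xi):=Y_{\xi\xi}/Y_\xi=\int_0^\tau f''(Y(s,\xi))Y_\xi(s,\xi)\,ds$, and then invokes Lemma~3.7 of \cite{ham}, which gives the sharp bound $A(\tau,\xi)\le C(e^{\mu\tau}-1)$ uniformly for $\xi\in[-2C_0,2C_0]$. With this in hand, $-(\partial_x\Phi^+)^2 A \ge -(\partial_x\Phi^+)^2 e^{\mu t/\varepsilon}$ (up to the harmless constant), and conditions (i) and (ii) of (\ref{condh}) finish the job in two lines. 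Your proposed bound ``$Y_{\xi\xi}\le \tfrac{1}{\varepsilon}\tau\|f''\|_\infty Y_\xi^2$'' is off: the $\tfrac{1}{\varepsilon}$ should not be there (the ODE for $Y$ has no $\varepsilon$), and even the corrected crude estimate $|A|\le \|f''\|_\infty\int_0^\tau Y_\xi(s)\,ds\le C\tau e^{c\tau}$ with $c=\sup f'$ only yields growth $e^{c\tau}$, whereas you need $e^{\mu\tau}$. In general $c\ge\mu$ strictly, so this is too weak to match condition (i) of (\ref{condh}); the point of Lemma~3.7 in \cite{ham} is precisely that the attraction of $Y$ toward the stable points (where $f'<0$) kills the excess growth of $Y_\xi$ in the integral defining $A$. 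Your final paragraph correctly senses that an estimate from \cite{ham} is needed here, but it is this bound on $A$, not a Gronwall bound on $Y$ itself.

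Two smaller remarks. First, the paper begins by checking that $\Phi^\pm(t,x)\in[-2C_0,2C_0]$ for $t\le C_1\varepsilon|\log\varepsilon|$, using condition (v) of (\ref{condh}); this is what guarantees the estimates on $Y$ and $A$ apply, and you should include it. Second, conditions (iii)--(v) of (\ref{condh}) play no role in this proposition --- they are used only later in Proposition~\ref{pro21} to read off decay of the super/sub solutions at $|x|\to\infty$ --- so your appeal to them for the comparison on $\mathbb{R}$ is misplaced.
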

Before proving this proposition, we give a notation as a preparation in advance. For $\xi \neq a_\pm,$ $a_0$, we define the following function:
\begin{align}
\label{def5-1}
A (\tau ,\xi ) := \frac{Y_{\xi \xi }  (\tau ,\xi )}{Y_\xi  (\tau ,\xi )},
\end{align}
where $Y_\xi$ and $Y_{\xi \xi}$ mean the derivatives of $Y$ with respect to $\xi$. We get an ODE:
\begin{align}
\label{ode6-1}
\begin{cases}
{Y}_{\xi \tau}  (\tau ,\xi ) = {Y}_\xi  (\tau ,\xi )  f'( Y  (\tau ,\xi ) ),\ \ \ \tau > 0,\\
Y _\xi (0, \xi )=1,
\end{cases}
\end{align}
and we obtain
\begin{align}
\label{eq5-1}
{Y}_\xi  (\tau ,\xi ) = \exp \left( \int _0 ^\tau f'\left( Y (\tau ,\xi )\right) ds \right),\ \ \ \tau \geq0,
\end{align}
from (\ref{ode6-1}). In particular, ${Y}_\xi$ is positive and thus we can define $A  (\tau ,\xi )$ as (\ref{def5-1}). We get 
\begin{align}
A (\tau, \xi ) = \int _0 ^\tau Y_\xi (s,\xi ) f''\left( Y(s,\xi )\right) ds,\ \ \ \tau \geq 0, \nonumber
\end{align}
by computing $Y_{\xi \xi}$ from (\ref{eq5-1}). Now we prove Proposition \ref{thm61} by using the maximum principle.

\begin{proof}[Proof of Proposition \ref{thm61}]
We fix $0 < C_1 < \frac{1}{\mu}$. At first, we need to check the initial conditions $\xi$ in $w_\varepsilon ^\pm$ are in $[-2C_0,2C_0]$. When $t \in [0,C_1 \varepsilon | \log \varepsilon |]$ and $\varepsilon$ is sufficiently small, we have that
\begin{align}
u_0 ^{\varepsilon} + \varepsilon h(x) ( e^{\frac{\mu t}{\varepsilon}} -1 ) \leq C_0 + h(x) (\varepsilon ^{1-C_1 \mu} - \varepsilon) \leq 2C_0 \nonumber
\end{align}
where $h$ is taken in Lemma \ref{consth} and $C_1< \frac{1}{\mu}$. Here we use the condition (v) of (\ref{condh}). In the same way, we can estimate $u_0 ^{\varepsilon} - \varepsilon h(x) ( e^{\frac{\mu t}{\varepsilon}} -1 )\geq -2C_0$. Let $\mathcal{L}$ be an operator which is defined by
\begin{align}
\mathcal{L} (u)(t,x) := \dot{u}(t,x) - \Delta u(t,x) - \frac{1}{\varepsilon}f(u(t,x)). \nonumber
\end{align}
From the maximum principle, if $\mathcal{L} (w_{\varepsilon} ^+)\geq 0$ then $w_{\varepsilon } ^+\geq u^{\varepsilon }$ (see Theorem 9 in \cite{fr} and proof of Lemma 2.2 in \cite{f} for the comparison of solutions from the maximum principle). A direct computation gives us
\begin{align}
\mathcal{L} (w_{\varepsilon} ^+)(t&,x) = \frac{1}{\varepsilon} Y_\tau + \mu h  e^{\frac{\mu t}{\varepsilon}} Y_\xi - \{ \Delta u_0 ^{\varepsilon} +\varepsilon \Delta h (e^{\frac{\mu t}{\varepsilon}} -1) \} Y_\xi \nonumber \\
&\ \ \ \ \ \ \ \ \ \ \ \ \ \ \ \ \ \ \ \ \ \ \ \ \ \ \ \ \ \ \ \ \ \ \ \ \ \ \ \ \ \ \ - \{ u_0 ^{\varepsilon \prime} +\varepsilon h' (e^{\frac{\mu t}{\varepsilon}} -1) \} ^2 Y_{\xi \xi} - \frac{1}{\varepsilon}f(Y )\nonumber \\
&= \mu h  e^{\frac{\mu t}{\varepsilon}} Y_\xi - \{ \Delta u_0 ^{\varepsilon } +\varepsilon \Delta h (e^{\frac{\mu t}{\varepsilon}} -1) \} Y_\xi - \{ u _0 ^{\varepsilon \prime} +\varepsilon h' (e^{\frac{\mu t}{\varepsilon}} -1) \} ^2 Y_{\xi \xi} \nonumber \\
&= \left[ \mu h  e^{\frac{\mu t}{\varepsilon}} - \{ \Delta u_0 ^{\varepsilon } +\varepsilon \Delta h (e^{\frac{\mu t}{\varepsilon}} -1) \} - \{ u_0 ^{\varepsilon \prime} +\varepsilon h' (e^{\frac{\mu t}{\varepsilon}} -1) \} ^2 A \right ] Y_{\xi} \nonumber \\
&\geq \left[ \left \{ \mu h - \{ u_0 ^{\varepsilon \prime} +\varepsilon h' (e^{\frac{\mu t}{\varepsilon}} -1) \} ^2 \right \}  e^{\frac{\mu t}{\varepsilon}} - \{ \Delta u_0 ^{\varepsilon } +\varepsilon \Delta h (e^{\frac{\mu t}{\varepsilon}} -1) \} \right] Y_{\xi} \nonumber \\
&\geq \left [ \mu h - \{ u_0 ^{\varepsilon \prime} +\varepsilon h' (e^{\frac{\mu t}{\varepsilon}} -1) \} ^2 - \{ \Delta u_0 ^{\varepsilon } +\varepsilon \Delta h (e^{\frac{\mu t}{\varepsilon}} -1) \} \right ] Y_{\xi} \nonumber
\end{align}
for all $x \in \mathbb{R}$ and $t\in[0,C_1 \varepsilon |\log \varepsilon |]$. Note that the function $Y_{\xi} $ is positive. The definition of $A$ gives us the third equality. The fourth inequality comes from Lemma 3.7 of \cite{ham} and the fifth inequality comes from the condition (i) of (\ref{condh}). From (ii) of (\ref{condh}), we see that $\mathcal{L} (w_{\varepsilon} ^+)\geq 0$. So we have proved that $w_{\varepsilon } ^+ \geq u^{\varepsilon }$ holds for all $t \in [0, C_1 \varepsilon |\log \varepsilon |]$ and $x \in \mathbb{R}$. The converse $w_{\varepsilon} ^- \leq u^{\varepsilon}$ can be proved in a similar way.
\qed \end{proof}

\subsection{The generation of interface in the deterministic case}
Now we formulate and prove the conclusion of this section.
\begin{proposition}
\label{pro21}
If $u^\varepsilon$ is the solution of PDE (\ref{eq:pde}) and $\mu$ is defined in (\ref{reaction}), then there exist $K>1$, $\kappa > \frac{1}{2}$, $C>0$ and $\widetilde{C}>0$ such that for sufficiently large  $C_1 \in (0, \frac{1}{\mu})$ and any $0<\bar{\beta}< 1-C_1 \mu$.\\
{\rm (i)} $a_- - \varepsilon ^\kappa \leq u ^\varepsilon (C_1 \varepsilon |\log \varepsilon |,x) \leq a_+ +\varepsilon ^\kappa \ (x \in [-K,K])$, \\
{\rm (ii)} $u ^\varepsilon (C_1 \varepsilon |\log \varepsilon |,x) \geq a_+ -\varepsilon ^\kappa$ (for all $x \in [-K,K]$ such that $u_0 ^{\varepsilon}(x) \geq a_0 + C \varepsilon ^{\bar{\beta}}$),

$u ^\varepsilon (C_1 \varepsilon |\log \varepsilon |,x) \leq a_- +\varepsilon ^\kappa$  (for all $x \in [-K,K]$ such that $u_0 ^{\varepsilon}(x) \leq a_0 - C \varepsilon ^{\bar{\beta}}$),\\
{\rm (iii)} $| u ^\varepsilon (C_1 \varepsilon |\log \varepsilon |,x) - a_+ | \leq \widetilde{C} \varepsilon ^\kappa \exp (-\frac{\sqrt{\mu} x}{2}) \  (x\geq K )$,

\ $| u ^\varepsilon (C_1 \varepsilon |\log \varepsilon |,x) - a_- | \leq \widetilde{C} \varepsilon ^\kappa \exp (\frac{\sqrt{\mu} x}{2}) \  (x\leq  -K)$.
\end{proposition}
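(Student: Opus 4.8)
The plan is to combine the sandwiching from Proposition~\ref{thm61} with the ODE estimates of Section~\ref{sec2-1}, evaluated at the right time. The key observation is that
\begin{align}
w_\varepsilon^\pm(C_1\varepsilon|\log\varepsilon|,x) = Y\!\left(C_1|\log\varepsilon|,\ u_0^\varepsilon(x)\pm h(x)(\varepsilon^{1-C_1\mu}-\varepsilon)\right), \nonumber
\end{align}
so the whole problem is reduced to understanding where the ODE flow $Y(C_1|\log\varepsilon|,\cdot)$ sends the perturbed initial data $\xi^\pm(x):=u_0^\varepsilon(x)\pm h(x)(\varepsilon^{1-C_1\mu}-\varepsilon)$. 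Because $h$ is bounded and $1-C_1\mu>0$, the perturbation term $h(x)(\varepsilon^{1-C_1\mu}-\varepsilon)$ tends to $0$; in particular for sufficiently small $\varepsilon$ it is dominated by $\varepsilon^{\bar\beta}$ on any fixed bounded set where one adjusts the constant $C$. Thus I would first fix $C_1$ large (so that $\frac{\tilde\alpha}{\mu}+\frac{\tilde\kappa}{p}\le C_1<\frac1\mu$ for the $\tilde\alpha,\tilde\kappa$ produced by Propositions~\ref{thm32}–\ref{thm33} for the given $\kappa$ and $\alpha=\bar\beta$), and then read off the three statements.

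For (i): for every $x\in[-K,K]$ the data $\xi^\pm(x)$ lie in $[-2C_0,2C_0]$ (shown in the proof of Proposition~\ref{thm61}). By Proposition~\ref{thm32}, any initial point in $[a_0+\varepsilon^\alpha,2C_0]$ is carried by $Y(C_1|\log\varepsilon|,\cdot)$ into $[a_+-\varepsilon^\kappa,a_++\varepsilon^\kappa]$, and by Proposition~\ref{thm33} any point in $[-2C_0,a_0-\varepsilon^\alpha]$ is carried into $[a_--\varepsilon^\kappa,a_-+\varepsilon^\kappa]$; for points in the remaining thin strip $(a_0-\varepsilon^\alpha,a_0+\varepsilon^\alpha)$ one uses monotonicity of $\xi\mapsto Y(\tau,\xi)$ together with the two extreme cases to see that $Y$ stays between $a_--\varepsilon^\kappa$ and $a_++\varepsilon^\kappa$. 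Hence $a_--\varepsilon^\kappa\le w_\varepsilon^-\le u^\varepsilon\le w_\varepsilon^+\le a_++\varepsilon^\kappa$ on $[-K,K]$, which is (i). For (ii): if $u_0^\varepsilon(x)\ge a_0+C\varepsilon^{\bar\beta}$ with $C$ chosen so that $C\varepsilon^{\bar\beta}-\|h\|_\infty(\varepsilon^{1-C_1\mu}-\varepsilon)\ge\varepsilon^{\bar\beta}$ for small $\varepsilon$, then $\xi^-(x)\ge a_0+\varepsilon^{\bar\beta}$, so by Proposition~\ref{thm32} (with $\alpha=\bar\beta$) and monotonicity $u^\varepsilon(C_1\varepsilon|\log\varepsilon|,x)\ge w_\varepsilon^-(C_1\varepsilon|\log\varepsilon|,x)\ge a_+-\varepsilon^\kappa$; the lower phase is symmetric using $\xi^+(x)\le a_0-\varepsilon^{\bar\beta}$ and Proposition~\ref{thm33}.

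For (iii), the tail estimate, I would use the sandwich together with condition (vi) of (\ref{reaction2}) (equivalently $f(u)\le -p(u-a_+)$ for $u\ge a_+$), exactly as in the proof of Proposition~\ref{thm32}: this yields a Gronwall-type contraction $|Y(\tau,\xi)-a_+|\le e^{-p\tau}|\xi-a_+|$ for $\xi\ge a_+$ (and the analogous lower bound handled via $f(u)\ge -p(u-a_+)$ near $a_+$ when $\xi<a_+$, or directly from Lemma~\ref{lem32}). Feeding in the initial decay $(\ref{eq:ini})$(iv), namely $|u_0^\varepsilon(x)-a_+|\le\varepsilon^\kappa C_\mu e^{-\sqrt\mu x/2}$ for $x\ge1$, together with condition (iii)--(iv) of (\ref{condh}) — which were designed precisely so that the perturbed datum still obeys $|\xi^\pm(x)-a_+|\le C\varepsilon^\kappa e^{-\sqrt\mu x/2}$ for $x\ge K$ — and the contraction factor $e^{-pC_1|\log\varepsilon|}=\varepsilon^{pC_1}\le1$, one obtains $|w_\varepsilon^\pm(C_1\varepsilon|\log\varepsilon|,x)-a_+|\le\widetilde C\varepsilon^\kappa e^{-\sqrt\mu x/2}$ for $x\ge K$, and hence the same bound for $u^\varepsilon$ by the sandwich. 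The region $x\le -K$ is symmetric using $(\ref{eq:ini})$(v) and (\ref{condh})(iv).

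The main obstacle is bookkeeping the constants consistently: one must choose $\kappa>\tfrac12$, then feed $\alpha=\bar\beta$ and this $\kappa$ into Propositions~\ref{thm32}–\ref{thm33} to get $\tilde\alpha,\tilde\kappa$, then require $C_1$ large enough that $C_1\ge\tfrac{\tilde\alpha}{\mu}+\tfrac{\tilde\kappa}{p}$ yet still $C_1<\tfrac1\mu$ (this is where one sees that $\mu$ must be comparatively small, i.e. $f'(0)$ not too large, for such $C_1$ to exist), and finally choose $C$ in (ii) large enough to absorb the $h$-perturbation. Once the hierarchy $\varepsilon^{1-C_1\mu}\gg\varepsilon^{\bar\beta}$-adjustment, $\tilde\alpha>\alpha=\bar\beta$, $\tilde\kappa>\kappa$, $C_1\mu<1$, and the tail conditions (\ref{condh})(iii)–(iv) are all lined up, the three claims follow directly from Proposition~\ref{thm61} and the ODE comparisons, with no further analytic input.
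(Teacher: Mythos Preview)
Your approach is essentially the paper's: sandwich by $w_\varepsilon^\pm$ via Proposition~\ref{thm61}, then feed the perturbed data $\xi^\pm(x)$ into the ODE estimates of Propositions~\ref{thm32}--\ref{thm33}, and for (iii) invoke conditions (iii)--(iv) of (\ref{condh}).

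There is one point in (ii) that does not quite work as you wrote it. You argue that $C$ can be chosen so that $C\varepsilon^{\bar\beta}-\|h\|_\infty(\varepsilon^{1-C_1\mu}-\varepsilon)\ge\varepsilon^{\bar\beta}$, treating $\|h\|_\infty$ as a fixed constant. But the $h$ produced by Lemma~\ref{consth} is $\varepsilon$-dependent and of order $\varepsilon^{-\beta}$ on $[-K,K]$ (see the construction of $\varphi$ there); condition (v) of (\ref{condh}) only guarantees $\|h\|_\infty(\varepsilon^{1-C_1\mu}-\varepsilon)\to0$, not that this product is $O(\varepsilon^{\bar\beta})$. So enlarging $C$ alone is insufficient. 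The fix, which is what the paper does, is to go back into the construction of $h$ and tune the parameter $\beta$: taking $2\beta=1-C_1\mu-\bar\beta$ forces $h(x)(\varepsilon^{1-C_1\mu}-\varepsilon)=O(\varepsilon^{1-C_1\mu-\beta})=o(\varepsilon^{\bar\beta})$, after which your argument (with $\alpha>\bar\beta$) goes through. This is exactly the ``bookkeeping'' you flag in your last paragraph, but the adjustable parameter is $\beta$ in $h$, not just $C$.

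For (iii) the paper is actually slightly simpler than your sketch: it does not use the exponential contraction factor $e^{-pC_1|\log\varepsilon|}$ at all, but only the monotonicity $Y(\tau,\xi)\le\xi$ for $\xi\ge a_+$ (and $Y(\tau,\xi)\ge\xi$ for $\xi\in(a_0,a_+)$), so that $|Y(C_1|\log\varepsilon|,\xi^\pm(x))-a_+|\le|\xi^\pm(x)-a_+|$, and then applies (\ref{condh})(iii)--(iv) directly.
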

\begin{proof}
(i) Propositions \ref{thm32} and \ref{thm61} imply that there exists a constant $C_1>0$ such that 
\begin{align}
u^\varepsilon (C_1\varepsilon |\log \varepsilon|,x) &\leq w_\varepsilon ^+ (C_1\varepsilon |\log \varepsilon|,x) \nonumber \\
&\leq Y \left( C_1 |\log \varepsilon| , u_0 ^{\varepsilon} (x) + h(x) ( \varepsilon ^{1-C_1 \mu}-\varepsilon ) \right) \leq a_+ + \varepsilon ^\kappa . \nonumber
\end{align}
Remind that the estimate $|u_0 ^{\varepsilon} (x) + h(x) ( \varepsilon ^{1-C_1 \mu}-\varepsilon )| \leq 2C_0 \nonumber$ holds for all $x \in \mathbb{R}$, $C_1\in (0,\frac{1}{\mu})$ and sufficiently small $\varepsilon >0$. The proof of the lower bound is similar.\\
(ii) We only show the first estimate. From Proposition \ref{thm61}, we obtain
\begin{align}
u^\varepsilon (C_1\varepsilon |\log \varepsilon| & ,x) \geq w_\varepsilon ^- (C_1\varepsilon |\log \varepsilon| ,x) = Y \left( C_1 |\log \varepsilon| , u_0 ^{\varepsilon} (x) - h(x) ( \varepsilon ^{1-C_1 \mu}-\varepsilon ) \right). \nonumber
\end{align}
Here we need to observe the neighborhood of $\xi_0$ which is the zero of $u_0 ^{\varepsilon}$. The condition $u_0 ^{\varepsilon} (x) - h(x) ( \varepsilon ^{1-C_1 \mu}-\varepsilon ) \geq \varepsilon^\alpha$ is equivalent to
\begin{align}
\label{est7-1}
u_0 ^{\varepsilon} (x) \geq h(x) ( \varepsilon ^{1-C_1 \mu}-\varepsilon ) + \varepsilon^\alpha ,
\end{align}
for $\alpha >0$. Thus there exists a positive constant $C>0$, and $u_0 ^{\varepsilon} (x) \geq C\varepsilon ^{\bar{\beta}}$ implies (\ref{est7-1}) if $\alpha >\bar{\beta}$ by taking $2\beta =1-C_1 \mu -\bar{\beta}$ in the construction of $h$ (see the proof of Proposition \ref{consth}).\\
(iii) We only show the first. From the definition of $h \in C_b ^2 (\mathbb{R})$, we immediately see that
\begin{align}
u^\varepsilon (C_1\varepsilon |\log \varepsilon| ,x) -a_+ &\leq w_\varepsilon ^+ (C_1\varepsilon |\log \varepsilon| ,x) - a_+ \nonumber \\
&=Y \left( C_1 |\log \varepsilon| , u_0 ^{\varepsilon} (x) + h(x) ( \varepsilon ^{1-C_1 \mu}-\varepsilon ) \right)-a_+\nonumber \\
&\leq u_0 ^{\varepsilon} (x) + h(x) ( \varepsilon ^{1-C_1 \mu}-\varepsilon )-a_+ \leq C \varepsilon^\kappa \exp \left ( -\frac{\sqrt{\mu} x}{2}\right ), \nonumber
\end{align}
for all $x>K$ from the condition (iii) of (\ref{condh}). The first inequality comes from Proposition \ref{thm61}. We also have that
\begin{align}
u^\varepsilon (&C_1\varepsilon |\log \varepsilon| ,x) -a_+ \geq - C \varepsilon^\kappa \exp \left ( -\frac{\sqrt{\mu} x}{2}\right ) . \nonumber
\end{align}
We get $|u^\varepsilon (C_1\varepsilon |\log \varepsilon| ,x) -a_- | \leq C \varepsilon^\kappa \exp (\frac{\sqrt{\mu} x}{2})$ for $x<-K$ in a similar way.
\qed \end{proof}

\section{Proof of Theorem {\ref{thm23}}}
\label{sec3}
In this section, we consider the SPDE (\ref{eq:spde}). Recall that the external noise term $\dot{W}^\varepsilon _t (x)$ is given by $\varepsilon ^\gamma a(x) \dot{W} _t (x)$ where $\dot{W} _t (x)$ is a space-time white noise and $a\in C_0^\infty ([-1,1])$, and that the reaction term $f$ satisfies (\ref{reaction}) and the initial value $u^\varepsilon _0$ satisfies (\ref{eq:ini}). Throughout this section, we set constants $C_f$ and $\kappa ' >1$, and assume that the constants $C_1$, $\alpha$ and $\kappa$ satisfying
\begin{align}
 \label{const}
  \begin{cases}
  C_f := \underset{u \in [-2C_0,2C_0]}{\sup} f'(u),& \\
  \alpha >\frac{1}{2},\ \kappa > \kappa ' >1, & \\
  \frac{\alpha}{\mu} + \frac{\kappa}{p} +\bar{\delta} \leq C_1 \leq \frac{1}{\mu}-\bar{\delta},
  \end{cases}
\end{align}
for sufficiently small $\bar{\delta}>0$. The constants $p$ and $\mu$ are introduced in (\ref{reaction}), and $C_0$ is introduced in (\ref{eq:ini}). In particular, the constant $C_1>0$ is the same constant as in Proposition \ref{pro21}.

\subsection{Preliminary results}
At the beginning of this section, we refer to the result about a property of the solution $u^\varepsilon$; see Section 2 of \cite{f} or Theorem 3.1 of \cite{f94}.
\begin{proposition}
\label{thm31}
If $|u^\varepsilon _0 (x)|\leq K$, then we have that
\begin{align}
\lim_{\varepsilon \to 0} P\left ( |u^\varepsilon  (t,x)| \leq \max \{K,1\} +\delta \ for\ all\ t \in [0,\varepsilon ^{-n}]\ and\ all\ x\in \mathbb{R}\right ) =1, \nonumber
\end{align}
for all $n$ and $\delta >0$.
\end{proposition}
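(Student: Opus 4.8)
The plan is to control $u^\varepsilon$ by a deterministic comparison function built from the ODE flow $Y(\tau,\xi)$ together with a correction that absorbs the noise. First I would split the solution as $u^\varepsilon = v^\varepsilon + z^\varepsilon$, where $z^\varepsilon(t) := \varepsilon^\gamma \int_0^t S_{t-s} a\, dW_s$ is the stochastic convolution (the Ornstein--Uhlenbeck type term, here with no mass term but the heat semigroup $S_t$), and $v^\varepsilon$ solves the random PDE $\dot v^\varepsilon = \Delta v^\varepsilon + \tfrac1\varepsilon f(v^\varepsilon + z^\varepsilon)$ with $v^\varepsilon(0) = u_0^\varepsilon$. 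The key probabilistic input is a supremum estimate on $z^\varepsilon$: since $a \in C_0^\infty([-1,1])$ and the kernel $p(t,x,y)$ is smooth, one shows by Gaussian/Kolmogorov-type arguments (factorization method, e.g. Da Prato--Zabczyk) that for every $n$ and $\rho>0$,
\begin{align}
P\left( \sup_{t\in[0,\varepsilon^{-n}]}\ \sup_{x\in\mathbb{R}} |z^\varepsilon(t,x)| \leq \varepsilon^{\gamma-\rho} \right) \to 1 \qquad (\varepsilon\to 0). \nonumber
\end{align}
The point is that the prefactor $\varepsilon^\gamma$ beats any polynomial-in-$\varepsilon^{-1}$ growth of the sup of the (normalized) stochastic convolution over the long time horizon $[0,\varepsilon^{-n}]$, because $\gamma$ is large and the white-noise convolution in one space dimension has only a mild (polynomial, indeed sub-polynomial) time growth of its running maximum.

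On the event where $\|z^\varepsilon\|_\infty \le \varepsilon^{\gamma-\rho} =: \eta_\varepsilon$, I would run a deterministic comparison on $v^\varepsilon$. Using condition (iv) of (\ref{reaction}) ($f' \le c$) one has $f(v+z) \le f(v) + c\,\eta_\varepsilon$ whenever $|z|\le\eta_\varepsilon$; by the maximum principle (Theorem 9 in \cite{fr}, as already invoked in the proof of Proposition \ref{thm61}), $v^\varepsilon$ is then bounded above by the solution $\bar v^\varepsilon$ of the spatially homogeneous ODE $\dot{\bar v} = \tfrac1\varepsilon(f(\bar v) + c\,\eta_\varepsilon)$ started from $\|u_0^\varepsilon\|_\infty \le K$. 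Because $f(u) \le -p(u-1)$ for $u\ge 1$ (condition (vi) of (\ref{reaction})) and $c\,\eta_\varepsilon \to 0$, the perturbed reaction $f(u)+c\eta_\varepsilon$ still has a stable zero within $\delta/2$ of $1$ and a negative-sign region just above it, so $\bar v^\varepsilon(t)$ stays below $\max\{K,1\}+\delta/2$ for all $t\ge 0$ once $\varepsilon$ is small (this is the same one-dimensional ODE analysis used in Lemma \ref{lem32} and Proposition \ref{thm32}, applied with a slightly perturbed $f$). The symmetric lower bound uses condition (v) of (\ref{reaction2})/oddness of $f$ in the same way. Adding back $|z^\varepsilon| \le \eta_\varepsilon \le \delta/2$ yields $|u^\varepsilon(t,x)| \le \max\{K,1\}+\delta$ on this event, for all $t\in[0,\varepsilon^{-n}]$ and all $x$. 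Intersecting with the high-probability event for $z^\varepsilon$ and letting $\varepsilon\to0$ gives the claim.

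The main obstacle is the uniform-in-$x$, long-time sup bound on the stochastic convolution $z^\varepsilon$: space-time white noise in one dimension gives a field that is only Hölder continuous (not bounded a priori in the naive sense), and one must control $\sup_{t\le\varepsilon^{-n}}\sup_{x\in\mathbb{R}}|z^\varepsilon(t,x)|$ over an unbounded spatial domain and a polynomially long time interval. This is handled by exploiting that $a$ has compact support in $[-1,1]$, so the covariance of $z^\varepsilon(t,\cdot)$ decays rapidly away from $[-1,1]$ (the heat kernel spreads mass but with Gaussian tails), reducing the spatial supremum effectively to a compact set plus a negligible tail; on the compact set one applies the factorization method together with a Borel--Cantelli / union bound over a mesh of times of spacing $\varepsilon^{n}$ (say), using Gaussian concentration for each fixed time and Hölder continuity in $t$ to fill the gaps. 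The smallness factor $\varepsilon^\gamma$ with $\gamma$ large then overwhelms all the resulting polynomial factors. Everything else is a routine comparison argument on a scalar ODE with a vanishing perturbation.
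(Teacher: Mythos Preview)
The paper does not give its own proof of this proposition; it simply cites Section~2 of \cite{f} and Theorem~3.1 of \cite{f94}. Your scheme --- split $u^\varepsilon = v^\varepsilon + z^\varepsilon$ with $z^\varepsilon$ the stochastic convolution, then run a deterministic comparison on $v^\varepsilon$ with a slightly perturbed reaction term --- is precisely Funaki's strategy (his Lemmas~2.1 and~2.2, restated in this paper as Lemma~\ref{lem81} and Lemma~\ref{lem82}), so in outline you are on the same track.

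The substantive difference is how the long horizon $[0,\varepsilon^{-n}]$ is handled. Funaki's Lemma~2.1 (Lemma~\ref{lem81} here) bounds $|z^\varepsilon|$ only on $[0,1]$ by $\varepsilon^\gamma Y(\omega)$ with $Y\in\bigcap_p L^p$; the long-time statement is then obtained by restarting the comparison on each unit interval $[k,k+1]$ and taking a union bound over the $O(\varepsilon^{-n})$ restarts, which works precisely because $Y\in L^p$ for every $p$. You instead assert a single global bound $\sup_{t\le\varepsilon^{-n},\,x}|z^\varepsilon(t,x)|\le\varepsilon^{\gamma-\rho}$. This is in fact true, but your justification is loose and as written looks like a gap: saying ``$\gamma$ is large'' is irrelevant since $n$ is arbitrary, and a genuinely polynomial growth $T^\alpha$ of the running maximum would \emph{not} be beaten by $\varepsilon^\gamma$ once $n>\gamma/\alpha$. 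What actually saves the global bound is the compact support of $a$: one has $\int_{\mathbb R} p(r,x,y)^2 a(y)^2\,dy \asymp r^{-1}$ (rather than $r^{-1/2}$) for large $r$, so $\mathrm{Var}\,Z(t,x)\lesssim 1+\log t$, and the running maximum of the normalized convolution over $[0,\varepsilon^{-n}]\times\mathbb R$ grows only like a power of $|\log\varepsilon|$. You should make this logarithmic-variance point explicit; without it the global route is not justified, and the iterate-on-unit-intervals argument is the cleaner way to close the proof. (A minor side remark: the inequality $f(v+z)\le f(v)+c\eta_\varepsilon$ from $f'\le c$ alone fails when $z<0$; use instead $g(u):=\sup_{|z|\le\eta_\varepsilon}f(u+z)$ as in Lemma~\ref{lem82}, or a two-sided bound on $f'$ on the relevant bounded range.)
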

From this result, we see that the solution $u^\varepsilon$ stays in the interval $[-2C_0 ,2C_0]$ up to high probability. We introduce a stopping time
\begin{align}
\tau _1 :=\inf \{t >0 |  |u^\varepsilon  (t,x)| > 2C_0\ for\ some\ x\in \mathbb{R} \}, \nonumber
\end{align}
so that $u^\varepsilon$ stays in $[-2C_0 ,2C_0]$ until the time $\tau _1$. The probability that $\tau _1 \geq \varepsilon ^{-n}$ occurs tends to 1 as $\varepsilon \to 0$ for each $n\in \mathbb{N}$ from Proposition \ref{thm31}.

Next we prove that the solution $u^\varepsilon$ of (\ref{eq:spde}) is close to a solution $u$ of (\ref{eq:pde}) if the initial values are same. The proof is based on the proof of Proposition 12.1 of \cite{dpz}. As a preparation, we show an estimate for a stochastic convolution. Note that we apply this result for small $T$ later.

\begin{lemma}
\label{lem87}
For each $p>4$ and $a(x)\in C_0 ^\infty (\mathbb{R})$, there exists a positive constant $C_{a,p}>0$ such that
\begin{align}
E\left [ \underset{t\in[0,T]}{\sup}\left \| \int _0^t \langle S_{t-s} a(\cdot), dW_s (\cdot )\rangle \right \| _{L^2(\mathbb{R})}^p\right ] \leq C_{a,p} T, \nonumber
\end{align}
holds for every $0<T<1$.
\end{lemma}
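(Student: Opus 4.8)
The plan is to bound the stochastic convolution $Z_t := \int_0^t \langle S_{t-s} a(\cdot), dW_s(\cdot)\rangle$ by combining the factorization method of Da Prato–Zabczyk (as in Proposition 5.9 of \cite{dpz}) with the fact that $a$ has compact support, which is what makes the $L^2(\mathbb{R})$-norm finite despite the lack of a spectral gap on the whole line. First I would write $Z_t$ via the factorization formula: fixing $\alpha \in (1/p, 1/4)$ (possible since $p > 4$), one has
\begin{align}
Z_t = \frac{\sin \pi \alpha}{\pi} \int_0^t (t-\sigma)^{\alpha - 1} S_{t-\sigma} Y_\sigma \, d\sigma, \qquad Y_\sigma := \int_0^\sigma (\sigma - s)^{-\alpha} S_{\sigma - s} a \, dW_s. \nonumber
\end{align}
Then I would apply Hölder's inequality in $\sigma$ together with the contraction property $\|S_r\|_{L^2 \to L^2} \le 1$ to get, for each fixed $t \le T$,
\begin{align}
\sup_{t \in [0,T]} \|Z_t\|_{L^2}^p \le C_{\alpha, p}\, T^{\alpha p - 1} \int_0^T \|Y_\sigma\|_{L^2}^p \, d\sigma, \nonumber
\end{align}
so that after taking expectations the matter reduces to showing $E[\|Y_\sigma\|_{L^2}^p] \le C_{a,p}$ uniformly in $\sigma \in [0,T]$, $T < 1$.

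For the process $Y_\sigma$, since it is a Gaussian random variable in $L^2(\mathbb{R})$, by the Gaussian hypercontractivity (Kahane–Khintchine) inequality it suffices to control the second moment $E[\|Y_\sigma\|_{L^2}^2]$ and then raise it to the power $p/2$. By the Itô isometry for the space-time white noise,
\begin{align}
E\left[ \|Y_\sigma\|_{L^2(\mathbb{R})}^2 \right] = \int_0^\sigma (\sigma - s)^{-2\alpha} \int_{\mathbb{R}} \int_{\mathbb{R}} p(\sigma - s, x, y)^2 a(y)^2 \, dy \, dx \, ds. \nonumber
\end{align}
Using $\int_{\mathbb{R}} p(r, x, y)^2 \, dx = \frac{1}{\sqrt{8 \pi r}}$ and that $a$ is bounded with compact support, the inner double integral is bounded by $C_a\, (\sigma - s)^{-1/2}$, so the whole expression is at most $C_a \int_0^\sigma (\sigma - s)^{-2\alpha - 1/2}\, ds$, which is finite and bounded uniformly for $\sigma < 1$ precisely because $2\alpha + 1/2 < 1$, i.e.\ $\alpha < 1/4$. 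This is exactly where the constraint $p > 4$ enters: it leaves room to choose $\alpha$ with $1/p < \alpha < 1/4$. Combining, $E[\|Y_\sigma\|_{L^2}^p] \le (C_{a,p})$ uniformly, and feeding this back gives $E[\sup_{[0,T]} \|Z_t\|_{L^2}^p] \le C_{\alpha,p} T^{\alpha p - 1} \cdot T \cdot C_{a,p}$; since $\alpha p - 1 > 0$ and $T < 1$ we may absorb $T^{\alpha p - 1} \le 1$ to obtain the stated bound $\le C_{a,p}\, T$.

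The main obstacle — and the only place where genuine care is needed rather than routine estimation — is the spatial integral $\int_{\mathbb{R}} \int_{\mathbb{R}} p(r,x,y)^2 a(y)^2\, dy\, dx$: on the whole line $\mathbb{R}$ the heat semigroup has no decay in the $L^2 \to L^2$ operator norm, so one cannot simply quote the bounded-domain estimates of \cite{dpz}. The resolution is that the $y$-integral is confined to $\mathrm{supp}\, a \subset [-1,1]$ and the Gaussian kernel satisfies $\|p(r, \cdot, y)\|_{L^2(\mathbb{R})}^2 = (8\pi r)^{-1/2}$ independently of $y$, so the compact support of $a$ transfers the needed integrability. One should also check that the stochastic integral defining $Y_\sigma$ is well defined as an $L^2(\mathbb{R})$-valued Itô integral, which follows from the same second-moment computation being finite for each fixed $\sigma$. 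Everything else — the factorization identity, Hölder, hypercontractivity — is standard.
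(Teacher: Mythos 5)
Your proposal is correct and follows essentially the same route as the paper: the Da Prato--Zabczyk factorization with $\alpha\in(1/p,1/4)$, H\"older's inequality in time, and the Hilbert--Schmidt bound $\|S_r a\|_{HS}^2\leq C_a r^{-1/2}$ coming from the compact support of $a$. The only cosmetic difference is that you control $E[\|Y_\sigma\|_{L^2}^p]$ via Gaussian hypercontractivity while the paper uses Burkholder's inequality; both are valid here since the integrand is deterministic, and both yield the stated $C_{a,p}T$ bound.
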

\begin{proof}
We use the factorization method (see Proposition 5.9, Theorem 5.10 and Proposition 7.3 of \cite{dpz}). Note that $\| S_{t}a \| _{HS} ^2 \leq C t^{-\frac{1}{2}} \| a (\cdot ) \| _{L^2(\mathbb{R})} ^2$ for some constant $C>0$, where $\| \cdot \| _{HS}$ is a Hilbert-Schmidt norm on $L^2(\mathbb{R})$ and $a:L^2(\mathbb{R}) \to L^2(\mathbb{R})$ is a multiplication operator which is defined by $(af)(x):=a(x)f(x)$. Indeed, from Chapman-Kolmogolov equation, we have that $\| S_{t}a \| _{HS} ^2 = \int _\mathbb{R} \int _\mathbb{R} p(t,x,y)^2 a(y)^2dxdy = \int _\mathbb{R} p(2t,y,y) a(y)^2dy = \frac{1}{\sqrt{4\pi t}} \| a (\cdot ) \| _{L^2(\mathbb{R})} ^2$. From this observation and the stochastic Fubini's theorem, we have that
\begin{align}
\int _0^t \langle S_{t-s} a(\cdot), dW_s (\cdot )\rangle = \frac{\sin \pi \alpha}{\pi} \int _0 ^t (t-s)^{\alpha -1} S_{t-s} Y(s)ds, \nonumber
\end{align}
where
\begin{align}
Y(s)=\int _0 ^s (s-r )^{-\alpha} \langle S_{s-r} a(\cdot), dW_r (\cdot )\rangle \nonumber
\end{align}
for each $\alpha \in (\frac{1}{p} , \frac{1}{4})$. Taking $q>1$ such that $\frac{1}{p}+\frac{1}{q} = 1$, we get
\begin{align}
\underset{t\in [0,T]}{\sup} \left \| \int _0^t \langle S_{t-s} a(\cdot), dW_s (\cdot )\rangle \right \| _{ L^2(\mathbb{R})} ^p & \leq \left ( \frac{\sin \pi \alpha}{\pi}\right ) ^p \underset{t\in [0,T]}{\sup} \left ( \int _0 ^t |t-s|^{\alpha -1} \|Y(s)\|_{ L^2(\mathbb{R})} ds \right )^p \nonumber \\
& \leq C \underset{t\in [0,T]}{\sup} \left ( \int _0 ^t |t-s|^{q(\alpha -1)} ds \right )^{\frac{q}{p}} \cdot \left ( \int _0 ^T \|Y(s)\|_{ L^2(\mathbb{R})} ^p ds \right )  \nonumber \\
& \leq C \int _0 ^T \|Y(s)\|_{ L^2(\mathbb{R})} ^p ds  \nonumber
\end{align}
from H\"{o}lder's inequality, because $q(\alpha -1) > -1$ and $T \in (0,1)$. Next we derive an estimate for $Y(s)$:
\begin{align}
E\left [ \|Y(s)\| _{L^2(\mathbb{R})} ^p\right ] & \leq E\left [ \underset{s' \in [0,s]}{\sup}\left \| \int _0 ^{s'} (s-r)^{-\alpha} \langle S_{s-r} a(\cdot), dW_r (\cdot )\rangle \right \| _{L^2(\mathbb{R})} ^p\right ] \nonumber \\
& \leq C_p \left ( \int _0 ^{s} (s-r )^{-2\alpha} \left \| S_{s-r} a\right \| _{HS} ^2 dr \right )^{\frac{p}{2}} \leq C_{a,p} s^{\frac{p}{2}(\frac{1}{2} -2\alpha)}. \nonumber
\end{align}
We have used Burkholder's inequality in the second line. To sum up these estimates, we can show this lemma noting $\frac{1}{2} -2\alpha > 0$.
\qed \end{proof}

\begin{proposition}
\label{thm81}
Let $u(t,x)$ be a solution of PDE (\ref{eq:pde}) where $f$ satisfies (\ref{reaction}) and the initial value $u^\varepsilon _0$ satisfies (\ref{eq:ini}). Then, we have that 
\begin{align}
\lim _{\varepsilon \downarrow 0}P\left( \underset{t\in [0, \frac{ \varepsilon}{\mu} |\log \varepsilon | ]}{\sup} \| u^\varepsilon (t,\cdot ) - u(t,\cdot) \|_{L^2(\mathbb{R})} \leq \varepsilon ^{\kappa } \right) =1, \nonumber
\end{align}
where $\kappa < \gamma -\frac{C_f}{\mu}$.
\end{proposition}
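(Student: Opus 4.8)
The plan is to estimate $v^\varepsilon:=u^\varepsilon-u$ by an $L^2$-energy argument after isolating the stochastic convolution $Z^\varepsilon(t):=\varepsilon^\gamma\int_0^t S_{t-s}a\,dW_s$, which Lemma \ref{lem87} already controls. Subtracting the mild forms of (\ref{eq:spde}) and (\ref{eq:pde}) and writing $f(u^\varepsilon)-f(u)=g^\varepsilon(t,\cdot)v^\varepsilon$ with $g^\varepsilon(t,x):=\int_0^1 f'\big((1-\theta)u(t,x)+\theta u^\varepsilon(t,x)\big)\,d\theta$, one checks that $w^\varepsilon:=v^\varepsilon-Z^\varepsilon$ solves the random linear parabolic equation $\partial_t w^\varepsilon=\Delta w^\varepsilon+\frac1\varepsilon g^\varepsilon(w^\varepsilon+Z^\varepsilon)$ with $w^\varepsilon(0)=0$.

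First I would fix a confinement event. The deterministic solution $u$ stays in $[-C_0,C_0]$ for all times, by comparison with the spatially constant solutions $Y(t/\varepsilon;\pm C_0)$ of $\dot Y=f(Y)$ together with (vi) of (\ref{reaction}) and the oddness of $f$ (so $Y(\cdot;C_0)$ is nonincreasing and $Y(\cdot;-C_0)$ nondecreasing, and the maximum principle applies); while Proposition \ref{thm31} gives $P(\tau_1>T)\to1$ for $T:=\frac{\varepsilon}{\mu}|\log\varepsilon|$. Hence on $E_\varepsilon:=\{\tau_1>T\}$ both $u^\varepsilon(t,\cdot)$ and $u(t,\cdot)$ take values in $[-2C_0,2C_0]$ for $t\le T$, so there $g^\varepsilon\le C_f$ (the constant of (\ref{const})) and $|g^\varepsilon|\le L_f:=\sup_{[-2C_0,2C_0]}|f'|$. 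One may instead replace $f$ outside $[-2C_0,2C_0]$ by a $C^2$ function with bounded derivative and $\sup f'\le C_f$, which is harmless on $E_\varepsilon$, to avoid carrying $\tau_1$ through the computation.

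On $E_\varepsilon$ I would run the energy estimate for $w^\varepsilon$: using $\langle\Delta w^\varepsilon,w^\varepsilon\rangle_{L^2}=-\|\nabla w^\varepsilon\|_{L^2}^2\le0$, the one-sided bound $g^\varepsilon\le C_f$, and Young's inequality with a small free parameter $\eta>0$ on the cross term $\frac1\varepsilon\langle g^\varepsilon Z^\varepsilon,w^\varepsilon\rangle_{L^2}$, one obtains $\frac{d}{dt}\|w^\varepsilon\|_{L^2}^2\le \frac{2(C_f+\eta)}{\varepsilon}\|w^\varepsilon\|_{L^2}^2+\frac{L_f^2}{2\eta\varepsilon}\|Z^\varepsilon\|_{L^2}^2$, and Gronwall's lemma (with $w^\varepsilon(0)=0$) then gives $\|w^\varepsilon(t)\|_{L^2}\le C(\eta)\,e^{(C_f+\eta)t/\varepsilon}\sup_{s\le t}\|Z^\varepsilon(s)\|_{L^2}$ with $C(\eta)$ independent of $\varepsilon$. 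Since $e^{(C_f+\eta)t/\varepsilon}\le\varepsilon^{-(C_f+\eta)/\mu}$ for $t\le T$ and $v^\varepsilon=w^\varepsilon+Z^\varepsilon$, this yields $\sup_{t\le T}\|v^\varepsilon(t)\|_{L^2}\le C'(\eta)\,\varepsilon^{-(C_f+\eta)/\mu}\sup_{s\le T}\|Z^\varepsilon(s)\|_{L^2}$ on $E_\varepsilon$. The energy identity for $w^\varepsilon$ is justified by the standard parabolic regularization $w^\varepsilon\in C([0,T];L^2)\cap L^2(0,T;H^1)$ pathwise, the forcing $\frac1\varepsilon g^\varepsilon Z^\varepsilon$ lying in $L^2_{\mathrm{loc}}$, or by mollifying and passing to the limit.

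Finally I would apply Lemma \ref{lem87} with a fixed $p>4$: since $T<1$ for small $\varepsilon$, $E[\sup_{s\le T}\|Z^\varepsilon(s)\|_{L^2}^p]=\varepsilon^{\gamma p}E[\sup_{s\le T}\|\int_0^s\langle S_{s-r}a,dW_r\rangle\|_{L^2}^p]\le C_{a,p}\,\varepsilon^{\gamma p+1}|\log\varepsilon|$. Choosing $\eta>0$ so small that $\gamma-\frac{C_f+\eta}{\mu}-\kappa>0$ (possible because $\kappa<\gamma-\frac{C_f}{\mu}$ strictly), Chebyshev's inequality together with the previous step gives $P\big(\{\sup_{t\le T}\|v^\varepsilon(t)\|_{L^2}>\varepsilon^\kappa\}\cap E_\varepsilon\big)\le C\,\varepsilon^{p(\gamma-(C_f+\eta)/\mu-\kappa)+1}|\log\varepsilon|\to0$, and combined with $P(E_\varepsilon^c)\to0$ the claim follows. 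The main obstacle is the exponential amplification by the bistable drift: the term $\frac1\varepsilon f(u^\varepsilon)$ forces the dangerous factor $e^{C_f t/\varepsilon}\sim\varepsilon^{-C_f/\mu}$ over the time window $T$, and the delicate point is to get exactly the one-sided exponent $C_f=\sup f'$ — not the full Lipschitz constant of $f$ on $[-2C_0,2C_0]$, which is far larger and would destroy the balance with $\gamma$ — which is precisely why an honest $L^2$-energy (or evolution-operator) estimate exploiting the sign of $f'$ is needed rather than a crude Gronwall argument in the mild formulation; the rest is routine.
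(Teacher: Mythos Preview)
Your proposal is correct and follows the same overall scheme as the paper --- Gronwall on the difference, Lemma~\ref{lem87} for the stochastic convolution, then Chebyshev --- but the technical execution of the Gronwall step differs in a way worth noting. The paper works directly with the mild formulation and writes
\[
\|u^\varepsilon(t\wedge\tau_2)-u(t\wedge\tau_2)\|_{L^2}\le \frac{C_f}{\varepsilon}\int_0^{t\wedge\tau_2}\|u^\varepsilon(s)-u(s)\|_{L^2}\,ds+\|u_1(t\wedge\tau_2)\|_{L^2},
\]
then applies Gronwall and Lemma~\ref{lem87}. Taken literally, bounding $\|S_{t-s}[f(u^\varepsilon)-f(u)]\|_{L^2}$ via contractivity of $S_{t-s}$ gives the two-sided Lipschitz constant $\sup_{[-2C_0,2C_0]}|f'|$ rather than the one-sided $C_f=\sup_{[-2C_0,2C_0]}f'$, so the paper's displayed inequality is a bit formal about the constant. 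Your route --- subtract $Z^\varepsilon$ first and run an $L^2$-energy estimate on $w^\varepsilon=u^\varepsilon-u-Z^\varepsilon$ --- is precisely what makes the one-sided bound $g^\varepsilon\le C_f$ usable, through $\langle g^\varepsilon w^\varepsilon,w^\varepsilon\rangle\le C_f\|w^\varepsilon\|_{L^2}^2$ together with $\langle\Delta w^\varepsilon,w^\varepsilon\rangle\le 0$. The introduction of the auxiliary $\eta>0$ in Young's inequality and the final choice $\eta$ small enough that $\gamma-(C_f+\eta)/\mu>\kappa$ are handled correctly. In short: same architecture, but your decomposition actually delivers the exponent $C_f$ in the statement, whereas the paper's mild-form Gronwall would, read strictly, only give $\sup|f'|$.
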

\begin{proof}
At first, we consider the mild form
\begin{align}
u^\varepsilon(t) -u(t)=\frac{1}{\varepsilon} \int _0^t S_{t-s}\{ f(u^\varepsilon (s)) -f(u (s)) \} ds +u_1(t) , \nonumber
\end{align}
where $u_1(t):=\varepsilon ^\gamma \int _0^t \langle S_{t-s}a(\cdot ),dW_s(\cdot) \rangle$. We now consider stopping times $\sigma := \inf \{ t>0 | \| u^\varepsilon(t) -u(t) \|_{L^2} > \varepsilon ^{\kappa } \}$ and $\tau_2 := \tau _1\wedge \sigma$. From Proposition \ref{thm31} and the definition of the positive constant $C_f >0$ given in (\ref{const}), we obtain
\begin{align}
\| u^\varepsilon(t \wedge \tau_2) -u(&t \wedge \tau_2) \|_{L^2} \leq \frac{C_f}{\varepsilon} \int _0^{t \wedge \tau_2} \| u^\varepsilon (s) -u (s) \|_{L^2} ds + \| u_1(t \wedge \tau _2) \|_{L^2} \nonumber \\
&\leq \frac{C_f}{\varepsilon} \int _0^t \| u^\varepsilon (s \wedge \tau_2) -u (s \wedge \tau_2) \|_{L^2} ds + \| u_1(t \wedge \tau _2) \|_{L^2}. \nonumber
\end{align}
From Gronwall's inequality, we have that
\begin{align}
\| u^\varepsilon(t \wedge \tau_2) -u(t \wedge \tau_2) \|_{L^2} & \leq \varepsilon ^\gamma \exp \left (\frac{C_f T}{\varepsilon} \right) \cdot \underset{t\in [0, T]}{\sup}\| \varepsilon ^{-\gamma} u_1(t \wedge \tau _2) \|_{L^2}\nonumber \\
& \leq \varepsilon ^\gamma \exp \left (\frac{C_f T}{\varepsilon} \right) \cdot \underset{t\in [0, T]}{\sup}\|\varepsilon ^{-\gamma} u_1(t) \|_{L^2} \nonumber
\end{align}
for each $T>0$. From the estimate in Lemma \ref{lem87}, we obtain
\begin{align}
E[\| u^\varepsilon(T \wedge \tau_2) -u(T \wedge \tau_2) \|_{L^2} ^p] \leq C_p \varepsilon ^{p \gamma} \exp \left (\frac{pC_f T}{\varepsilon} \right) T  \nonumber
\end{align}
for every $p>4$ and $0<T<1$. As a result, for sufficiently large $p$, we obtain
\begin{align}
P(\sigma \leq \frac{ \varepsilon}{\mu} |\log \varepsilon |) &\leq P(\tau _2 \leq \frac{ \varepsilon}{\mu} |\log \varepsilon |) \leq \varepsilon ^{-p\kappa } E[\| u^\varepsilon(\frac{ \varepsilon}{\mu} |\log \varepsilon | \wedge \tau_2) -u(\frac{ \varepsilon}{\mu} |\log \varepsilon | \wedge \tau_2) \|_{L^2} ^p]\nonumber \\
&\leq C \varepsilon ^{p(\gamma -\kappa -\frac{C_f}{\mu}) +1} |\log \varepsilon | \nonumber
\end{align}
from Chebyshev inequality with the choice of $T=\frac{ \varepsilon}{\mu} |\log \varepsilon |$. Note that the right hand side converges to 0 as $\varepsilon \to 0$ in the order of $O(\varepsilon ^{p(\gamma -\kappa -\frac{C_f}{\mu}) +1} |\log \varepsilon |)$. From the conditions of $\gamma$ and $\kappa$, $p(\gamma -\kappa -\frac{C_f}{\mu}) +1$ is strictly positive. This estimate implies the conclusion.
\qed \end{proof}

Next we need to modify Proposition \ref{thm31}. The outline of the proof is similar to that of Theorem 2.1 in \cite{f}. At first we consider a stochastic process $u_1(t)$ in the proof of Proposition \ref{thm81}. Here $u_1$ satisfies the stochastic heat equation;
\begin{align}
\begin{cases}
\dot{u}_1 (t,x) &= \Delta u_1 (t,x)+\varepsilon ^\gamma a(x) \dot{W}_t(x) ,\ \ \ t > 0,\  x\in \mathbb{R}, \nonumber \\
u_1 (0,x) &= 0,\ \ \ x\in \mathbb{R}. \nonumber
\end{cases}
\end{align}
Now we refer to a result which asserts that the perturbation of the noise is very small; see Lemma 2.1 in \cite{f}.
\begin{lemma}
\label{lem81}
There exists a random variable $Y(\omega) \in \cap _{p\geq 1} L^p (\Omega )$ such that
\begin{align}
|u_1 (t,x)| \leq \varepsilon ^\gamma Y(\omega), \ \ \ t\in [0,1],\ x \in \mathbb{R},\ 0<\varepsilon <1. \nonumber
\end{align}
\end{lemma}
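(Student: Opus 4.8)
The plan is to factor out the deterministic coefficient $\varepsilon^\gamma$ and prove a property of the single $\varepsilon$-independent centred Gaussian field $v(t,x):=\varepsilon^{-\gamma}u_1(t,x)$ --- the value at $x$ of the $L^2(\mathbb{R})$-valued stochastic convolution $\int_0^t\langle S_{t-s}a(\cdot),dW_s(\cdot)\rangle$ --- namely that $Y(\omega):=\sup_{(t,x)\in[0,1]\times\mathbb{R}}|v(t,x)|$ belongs to $\bigcap_{p\ge1}L^p(\Omega)$. Granting this, $|u_1(t,x)|=\varepsilon^\gamma|v(t,x)|\le\varepsilon^\gamma Y(\omega)$ for all $t\in[0,1]$, $x\in\mathbb{R}$, $\varepsilon\in(0,1)$, which is the assertion. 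The two structural facts to exploit are that $v$ is Gaussian (a Wiener integral of a deterministic kernel against the white noise), so all its moments are controlled by second moments, and that $\operatorname{supp}a\subset[-1,1]$ forces $v(t,x)$ to be exponentially small for $|x|$ large, which makes the supremum over the whole line integrable. This is Lemma 2.1 of \cite{f}; here is the outline.

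First I would record the covariance estimates. As in the proof of Lemma \ref{lem87}, $E[v(t,x)^2]=\int_0^t\int_{\mathbb{R}}p(t-s,x,y)^2a(y)^2\,dy\,ds$. Since $\operatorname{supp}a\subset[-1,1]$, for $|x|\ge2$ and $y\in\operatorname{supp}a$ one has $(x-y)^2\ge x^2/4$, hence $p(u,x,y)^2\le(4\pi u)^{-1}e^{-x^2/(8u)}$; integrating over $u\in(0,1]$ (substituting $u=1/w$) gives $\sup_{t\in[0,1]}E[v(t,x)^2]\le Ce^{-x^2/8}$ for $|x|\ge2$. In the same region $|\partial_x p(u,x,y)|=\tfrac{|x-y|}{2u}p(u,x,y)\le C|x|u^{-3/2}e^{-x^2/(8u)}$, so by the mean value theorem and the analogous integration, for $t,t'\in[0,1]$ and $x,x'$ in a common interval $[k,k+1]$ with $|k|\ge2$ one gets $E[|v(t,x)-v(t',x')|^2]\le Ce^{-ck^2}\bigl(|t-t'|^{1/2}+|x-x'|\bigr)$; for $|k|<2$ the standard parabolic bound $E[|v(t,x)-v(t',x')|^2]\le C\bigl(|t-t'|^{1/2}+|x-x'|\bigr)$ holds (same heat-kernel computation). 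By Gaussianity each of these upgrades to the $2n$-th moment with the right-hand side raised to the $n$-th power, for every $n$.

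Next I would run a square-by-square chaining argument. Put $Q_k:=[0,1]\times[k,k+1]$ for $k\in\mathbb{Z}$ and set $\phi_k:=e^{-ck^2}$ for $|k|\ge2$, $\phi_k:=1$ for $|k|<2$, so that on $Q_k$ one has $\sup_{z\in Q_k}E[v(z)^2]\le C\phi_k$ and $E[|v(z)-v(z')|^{2n}]\le C_n\phi_k^{\,n}\rho(z,z')^{\,n}$ with $\rho(z,z')=|t-t'|^{1/2}+|x-x'|$. Kolmogorov's continuity theorem on $Q_k$, in quantitative form and with $n$ large enough that the two-dimensional metric-entropy sum converges, produces a continuous modification and the oscillation bound $E\bigl[(\operatorname{osc}_{Q_k}v)^{2n}\bigr]\le C_n\phi_k^{\,n}$ (the scalar $\phi_k$ factors out of the chaining); combining with the pointwise bound at a fixed $z_k\in Q_k$, $E[\sup_{Q_k}|v|^{2n}]\le C_n'\phi_k^{\,n}$. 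Only the four indices $k\in\{-2,-1,0,1\}$ have $\phi_k=1$, the rest decay like $e^{-cnk^2}$, so $E[Y^{2n}]\le\sum_{k\in\mathbb{Z}}E[\sup_{Q_k}|v|^{2n}]<\infty$ for every sufficiently large $n$, whence $Y\in\bigcap_{p\ge1}L^p(\Omega)$. (Alternatively, once $E[Y^2]<\infty$ is known, Fernique's theorem delivers all moments of the Gaussian $Y$ at once.)

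The main obstacle is precisely the uniformity over all of $\mathbb{R}$: a crude increment bound controls $v$ only locally in $x$ and would make $\sum_k E[\sup_{Q_k}|v|^{2n}]$ diverge, so one has to carry the exponential decay in $|k|$ coming from $\operatorname{supp}a\subset[-1,1]$ through \emph{both} the pointwise variance and the increment estimates, as above. The time direction is harmless: although $v$ is only Hölder-$\tfrac14^-$ in $t$, the exponent $\tfrac12$ in the $L^2$-increment is compatible with Kolmogorov once $2n$-th moments with $n$ large are used. Verifying the precise covariance formula for $v$ and the parabolic increment bound for $|k|<2$ is routine and parallels the heat-kernel estimates already carried out in the proof of Lemma \ref{lem87}.
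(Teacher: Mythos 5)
Your argument is correct, and it is in substance the standard proof of the result the paper merely invokes: the paper offers no proof of this lemma, citing Funaki's Lemma 2.1 in \cite{f} directly. Your reduction to the $\varepsilon$-independent Gaussian field $v=\varepsilon^{-\gamma}u_1$, the exponential decay in $|x|$ of the variance and increment covariances coming from $\mathrm{supp}\,a\subset[-1,1]$, and the square-by-square Kolmogorov chaining with Gaussian moment equivalence are exactly the ingredients of that cited proof, so nothing further is needed.
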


Next we consider a PDE;
\begin{align}
\begin{cases}
\dot{\bar{u}}_\pm ^{\varepsilon , \delta} (t,x) &= \Delta \bar{u}_\pm ^{\varepsilon , \delta} (t,x)+\displaystyle{\frac{1}{\varepsilon}} f_\pm ^\delta( \bar{u}_\pm ^{\varepsilon , \delta} (t,x) ) ,\ \ \ t > 0,\  x\in \mathbb{R}, \nonumber \\
\bar{u}_\pm ^{\varepsilon , \delta} (0,x) &= u_0 ^\varepsilon (x)\pm \delta ,\ \ \ x\in \mathbb{R}, \nonumber
\end{cases}
\end{align}
for small $\delta >0$, where the functions $f_\pm ^\delta \in C^2(\mathbb{R})$ satisfy the following conditions;
\begin{align}
f_+ ^\delta(u) \geq \sup _{|v|\leq \delta} f_+ ^\delta(u+v),\ f_+ ^\delta(\pm 1+\delta) =0,\ f_+ ^\delta(-\delta)=0,\ \frac{d}{du}f_+ ^\delta (-\delta)=\mu , \nonumber \\
f_- ^\delta(u) \leq \inf _{|v|\leq \delta} f_- ^\delta(u+v),\ f_- ^\delta(\pm 1-\delta) =0,\ f_- ^\delta(\delta)=0,\ \frac{d}{du}f_- ^\delta (\delta)=\mu , \nonumber
\end{align}
and $u_0^\varepsilon$ satisfies (\ref{eq:ini}). Note that we choose the reaction terms $f_\pm ^\delta$ to satisfy (\ref{reaction2}). Thus we can apply the result of Section \ref{sec2} to the solutions $\bar{u}_\pm ^{\varepsilon , \delta}$. We set a stopping time $\tau _3:=\inf \{ t>0 | |u_1(t,x)|>\delta\ for\ some \  x\in \mathbb{R}\}$.
\begin{lemma}
\label{lem82}
On the event $\{ \omega \in \Omega | \tau_3 \geq 1 \}$, we have that
\begin{align}
\bar{u}_- ^{\varepsilon ,\delta}(t,x) -\delta \leq u^\varepsilon (t,x) \leq \bar{u}_+ ^{\varepsilon ,\delta}(t,x) +\delta, \ \ \ t\in [0,1],\ x \in \mathbb{R}, \nonumber
\end{align}
where $u^\varepsilon$ is the solution of (\ref{eq:spde}).
\end{lemma}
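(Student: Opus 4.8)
The plan is to remove the stochastic forcing by subtracting the stochastic convolution $u_1$ from $u^\varepsilon$; this converts the SPDE (\ref{eq:spde}) into a pathwise (i.e.\ for each fixed $\omega$) reaction--diffusion PDE, to which the deterministic comparison principle for parabolic equations (Theorem 9 of \cite{fr}; cf.\ the proof of Lemma 2.2 in \cite{f}) applies. This is the same strategy as in the proof of Theorem 2.1 of \cite{f}.

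Concretely, set $v^\varepsilon(t,x) := u^\varepsilon(t,x) - u_1(t,x)$. Since $u_1$ solves the linear stochastic heat equation with zero initial data, $v^\varepsilon(0,\cdot) = u_0^\varepsilon$, and subtracting the mild form of $u_1$ from that of $u^\varepsilon$ shows that $v^\varepsilon$ is a pathwise solution of
\begin{align}
\dot{v}^\varepsilon = \Delta v^\varepsilon + \frac{1}{\varepsilon} f\bigl(v^\varepsilon + u_1\bigr), \qquad v^\varepsilon(0,\cdot) = u_0^\varepsilon, \nonumber
\end{align}
with the continuous (indeed H\"{o}lder) field $u_1$ entering only through the nonlinearity; by parabolic regularity $v^\varepsilon$ is smooth enough on $(0,1]\times\mathbb{R}$ to run the maximum principle. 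On the event $\{\tau_3 \ge 1\}$ we have $|u_1(t,x)| \le \delta$ for all $t\in[0,1]$, $x\in\mathbb{R}$, so pointwise
\begin{align}
\frac{1}{\varepsilon} f\bigl(v^\varepsilon(t,x)+u_1(t,x)\bigr) \le \frac{1}{\varepsilon}\sup_{|w|\le\delta} f\bigl(v^\varepsilon(t,x)+w\bigr) \le \frac{1}{\varepsilon} f_+^\delta\bigl(v^\varepsilon(t,x)\bigr), \nonumber
\end{align}
using that $f_+^\delta$ dominates $f$ under shifts of size at most $\delta$ (i.e.\ $f_+^\delta(u)\ge\sup_{|w|\le\delta}f(u+w)$). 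Hence $v^\varepsilon$ is a subsolution of the PDE governing $\bar{u}_+^{\varepsilon,\delta}$; since moreover $v^\varepsilon(0,\cdot)=u_0^\varepsilon \le u_0^\varepsilon+\delta = \bar{u}_+^{\varepsilon,\delta}(0,\cdot)$, the comparison theorem gives $v^\varepsilon(t,x) \le \bar{u}_+^{\varepsilon,\delta}(t,x)$ on $[0,1]\times\mathbb{R}$, and adding $u_1 \le \delta$ back yields $u^\varepsilon \le \bar{u}_+^{\varepsilon,\delta}+\delta$. The lower bound $u^\varepsilon \ge \bar{u}_-^{\varepsilon,\delta}-\delta$ follows symmetrically, comparing $v^\varepsilon$ with $\bar{u}_-^{\varepsilon,\delta}$ via $f_-^\delta$ and using $u_1 \ge -\delta$.

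The main obstacle is to justify the comparison rigorously on the unbounded domain $\mathbb{R}$: one must supply a uniform bound on $v^\varepsilon$ and on $\bar{u}_\pm^{\varepsilon,\delta}$ so that the maximum principle of \cite{fr} is applicable — this follows from the boundedness of $u_0^\varepsilon$ in (\ref{eq:ini}), the bound $|u_1|\le\delta$ on the event, and the one-sided Lipschitz and growth conditions (iv), (vi) of (\ref{reaction}) (alternatively, on $\{\tau_1\ge1\}$, directly from Proposition \ref{thm31}) — and one must verify the $C^2$-regularity needed to treat $v^\varepsilon$ classically. A secondary, more bookkeeping-type point is the existence of the auxiliary reaction terms $f_\pm^\delta\in C^2(\mathbb{R})$ meeting all the listed requirements together with (\ref{reaction2}) (bistability with appropriately shifted zeros, the domination property above, the one-sided Lipschitz and growth bounds), which is a routine interpolation exercise in the spirit of Lemma \ref{consth}.
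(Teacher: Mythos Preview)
Your proof is correct and follows essentially the same route as the paper: the paper decomposes $u^\varepsilon = u_1 + u_2$ (your $v^\varepsilon$ is exactly their $u_2$), observes that $u_2$ solves the random PDE $\dot u_2 = \Delta u_2 + \varepsilon^{-1}f(u_1+u_2)$, and then compares $u_2$ with $\bar u_+^{\varepsilon,\delta}$ via the maximum principle, referring to Lemma~2.2 of \cite{f} for the details. Your identification of the domination property $f_+^\delta(u)\ge\sup_{|w|\le\delta}f(u+w)$ as the key inequality and your handling of the unbounded-domain issue are exactly what is implicit in that reference.
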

\begin{proof}
We only consider the upper bound on $\{ \omega \in \Omega | \tau_3 \geq 1 \}$. We consider the PDE
\begin{align}
\begin{cases}
\dot{u}_2 (t,x) &= \Delta u_2 (t,x)+\displaystyle{\frac{1}{\varepsilon}}f(u_1 +u_2) ,\ \ \ t > 0,\  x\in \mathbb{R}, \nonumber \\
u_2 (0,x) &= u_0 ^\varepsilon (x),\ \ \ x\in \mathbb{R}, \nonumber
\end{cases}
\end{align}
where $u_0 ^\varepsilon$ satisfies (\ref{eq:ini}), and take the function $v(t,x)= \bar{u}_+ ^{\varepsilon ,\delta}(t,x)-u_2 (t,x)$. Here $u_1$ is defined in the proof of Proposition \ref{thm81}. Note that $u^\varepsilon = u_1 + u_2$. The rest of this proof is similar to that of Lemma 2.2 of \cite{f}.
\qed \end{proof}

\begin{proposition}
\label{thm82}
Let $u^\varepsilon$ be the solution of (\ref{eq:spde}) and assume that the initial value $u_0^\varepsilon$ satisfies (\ref{eq:ini}). Then there exist some positive constants $C_1, \ C>0$ and $K>1$ such that
\begin{align}
\lim _{\varepsilon \downarrow 0}P\left( | u^\varepsilon (t,x) -1 | \leq \varepsilon ^{\kappa } \left( C\exp \left( -\frac{\sqrt{\mu} x}{2}\right) +1 \right ) \ for \ all \ t \in [0, C_1 \varepsilon |\log \varepsilon |],\ x \geq K \right) =1, \nonumber \\
\lim _{\varepsilon \downarrow 0}P\left( | u^\varepsilon (t,x) +1 | \leq \varepsilon ^{\kappa }\left( C\exp \left( \frac{\sqrt{\mu} x}{2}\right) +1 \right ) \ for \ all \ t \in [0, C_1 \varepsilon |\log \varepsilon |],\ x \leq -K \right) =1, \nonumber
\end{align}
for all $0< \kappa <\gamma$.
\end{proposition}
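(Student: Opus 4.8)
The plan is to trap $u^\varepsilon$ between the two deterministic solutions $\bar u_\pm^{\varepsilon,\delta}$ furnished by Lemma~\ref{lem82}, with the comparison parameter $\delta$ taken to vanish with $\varepsilon$, and then feed $\bar u_\pm^{\varepsilon,\delta}$ into the deterministic generation estimates of Section~\ref{sec2}. Concretely, I would fix $\delta=\delta(\varepsilon):=\varepsilon^{\kappa''}$ with $\kappa<\kappa''<\gamma$ (possible since $\kappa<\gamma$ by (\ref{gamma})). By Lemma~\ref{lem81}, $|u_1(t,x)|\le\varepsilon^\gamma Y(\omega)$ for $t\in[0,1]$, $x\in\mathbb R$, so on the event $\{\,Y(\omega)\le\varepsilon^{\kappa''-\gamma}\,\}$ one has $\sup_{t\le1,\,x}|u_1(t,x)|\le\delta$, i.e.\ $\tau_3\ge1$; since $\varepsilon^{\kappa''-\gamma}\to\infty$ and $Y\in\bigcap_{p\ge1}L^p(\Omega)$, this event has probability tending to $1$. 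On it, Lemma~\ref{lem82} gives
\begin{align}
\bar u_-^{\varepsilon,\delta}(t,x)-\delta\ \le\ u^\varepsilon(t,x)\ \le\ \bar u_+^{\varepsilon,\delta}(t,x)+\delta \nonumber
\end{align}
for all $x\in\mathbb R$ and $t\in[0,1]$, hence in particular on $[0,C_1\varepsilon|\log\varepsilon|]$ once $\varepsilon$ is small enough that $C_1\varepsilon|\log\varepsilon|<1$.

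The core step is to bound the tails of $\bar u_\pm^{\varepsilon,\delta}$ for $|x|\ge K$. These functions solve (\ref{eq:pde}) with reaction term $f_\pm^\delta$, which satisfies (\ref{reaction2}) with stable points $1\pm\delta$, $-1\pm\delta$ and unstable point $\mp\delta$, and with initial datum $u_0^\varepsilon\pm\delta$, which satisfies the tail bounds (iv), (v) of (\ref{eq:ini}) relative to $1\pm\delta$ because $|u_0^\varepsilon(x)\pm\delta-(1\pm\delta)|=|u_0^\varepsilon(x)-1|$ and the derivative terms are unchanged by adding a constant. Thus the construction of Section~\ref{sec2} applies with no essential change and with the same $C_\mu,\mu,\kappa$: Lemma~\ref{consth} produces a cut-off $h$ satisfying (\ref{condh}), and Proposition~\ref{thm61} traps $\bar u_\pm^{\varepsilon,\delta}$ between the super/sub-solutions $Y_\pm^\delta\big(t/\varepsilon,\ u_0^\varepsilon(x)\pm\delta\pm\varepsilon h(x)(e^{\mu t/\varepsilon}-1)\big)$ on $[0,C_1\varepsilon|\log\varepsilon|)$, where $Y_\pm^\delta$ denotes the flow of $f_\pm^\delta$. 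For $x\ge K$, since $C_1<\frac{1}{\mu}$ the factor $\varepsilon h(x)(e^{\mu t/\varepsilon}-1)\le h(x)(\varepsilon^{1-C_1\mu}-\varepsilon)$ throughout the window, so condition (iii) of (\ref{condh}) together with the tail bound on $u_0^\varepsilon$ confines the argument of $Y_\pm^\delta$ to the interval $[(1\pm\delta)-C\varepsilon^\kappa e^{-\sqrt\mu x/2},\,(1\pm\delta)+C\varepsilon^\kappa e^{-\sqrt\mu x/2}]$ for every $t\in[0,C_1\varepsilon|\log\varepsilon|]$; because $f_\pm^\delta(u)\le-p(u-(1\pm\delta))$ for $u\ge1\pm\delta$ and $f_\pm^\delta>0$ on $(\mp\delta,1\pm\delta)$, the flow $Y_\pm^\delta(\tau,\cdot)$ maps this interval into itself, whence $|\bar u_\pm^{\varepsilon,\delta}(t,x)-(1\pm\delta)|\le C\varepsilon^\kappa e^{-\sqrt\mu x/2}$ for all such $t$. (This is the reasoning behind Proposition~\ref{pro21}(iii), except that there only the endpoint $t=C_1\varepsilon|\log\varepsilon|$ is recorded.) The case $x\le-K$ is symmetric, using (v) of (\ref{eq:ini}) and (iv) of (\ref{condh}).

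Combining the two steps, on the good event and for $x\ge K$, $t\in[0,C_1\varepsilon|\log\varepsilon|]$,
\begin{align}
1-2\delta-C\varepsilon^\kappa e^{-\sqrt\mu x/2}\ \le\ u^\varepsilon(t,x)\ \le\ 1+2\delta+C\varepsilon^\kappa e^{-\sqrt\mu x/2}, \nonumber
\end{align}
and since $2\delta=2\varepsilon^{\kappa''}\le\varepsilon^\kappa$ for $\varepsilon$ small this is dominated by $\varepsilon^\kappa\big(C e^{-\sqrt\mu x/2}+1\big)$; the estimate near $-1$ for $x\le-K$ follows the same way, and the probability of the good event tends to $1$, which proves the proposition. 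I expect the main obstacle to be the transfer of the Section~\ref{sec2} machinery to the $\delta$-shifted problems \emph{uniformly in the vanishing parameter $\delta$} --- that is, checking that $h$ and the constants in (\ref{condh}) can be chosen independently of $\delta$, which holds because only the unchanged $\varepsilon^\kappa$-tail of $u_0^\varepsilon$ enters their construction --- together with the bookkeeping that $\delta=\varepsilon^{\kappa''}$ is simultaneously large enough for $P(\tau_3\ge1)\to1$ (using $\kappa''<\gamma$) and small enough to be absorbed into the target $\varepsilon^\kappa$-error (using $\kappa''>\kappa$).
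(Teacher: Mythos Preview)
Your proposal is correct and follows essentially the same route as the paper: take $\delta$ a power of $\varepsilon$, use Lemma~\ref{lem81} plus Chebyshev to force $\tau_3\ge 1$ with high probability, sandwich $u^\varepsilon$ via Lemma~\ref{lem82}, and apply the deterministic tail estimate Proposition~\ref{pro21}(iii) to $\bar u_\pm^{\varepsilon,\delta}$. The only cosmetic differences are that the paper takes $\delta=\varepsilon^{\kappa}$ directly (rather than your $\varepsilon^{\kappa''}$ with $\kappa<\kappa''<\gamma$) and cites Proposition~\ref{pro21} without spelling out, as you do, that the same comparison argument gives the bound uniformly over $t\in[0,C_1\varepsilon|\log\varepsilon|]$ and not just at the endpoint.
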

\begin{proof}
We only prove the first one. By taking $\delta =\varepsilon^{\kappa }$ and $K$ as in Proposition \ref{pro21}, we obtain
\begin{align}
&P \left ( | u^\varepsilon (t,x) -1 | \leq \varepsilon ^{\kappa } \left ( C\exp \left ( -\frac{\sqrt{\mu} x}{2} \right ) +1\right ) \ for \ all \ t \in [0, C_1 \varepsilon |\log \varepsilon |],\ x \geq K \right ) \nonumber \\
&\ \ \ \ \ \ \ \ \ \ \ \ \ \ \ \ \ \ \ \ \ \ \geq P(\varepsilon ^\gamma Y(\omega )\leq \varepsilon ^{\kappa }) \geq 1- \varepsilon ^{p (\gamma - \kappa )}E[Y^p] \nonumber
\end{align}
for all $p\geq1$ from Lemma \ref{lem82}, Chebyshev inequality and Proposition \ref{pro21}. We apply Proposition \ref{pro21} for the solutions $\bar{u}_\pm ^{\varepsilon , \delta}$.
\qed \end{proof}

\begin{proposition}
\label{pro81}
Let $\underbar{u}$ and $\bar{u}$ be the solutions of (\ref{eq:spde}) which satisfy $\underbar{u}(0,x) \leq \bar{u}(0,x)$ for all $x \in \mathbb{R}$. Then $\underbar{u}(t,x) \leq \bar{u}(t,x)$ holds for all $x \in \mathbb{R}$ and $t \in [0,\infty )$ $P$-a.s.
\end{proposition}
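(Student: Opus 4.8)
The plan is to strip off the common stochastic part of the two solutions and reduce the claim to a comparison principle for a pathwise deterministic linear parabolic equation, following the device already used in the proof of Lemma \ref{lem82} and in Lemma 2.2 of \cite{f}. First I would write $\bar{u} = u_1 + \bar{u}_2$ and $\underbar{u} = u_1 + \underbar{u}_2$, where $u_1$ is the stochastic convolution solving the stochastic heat equation $\dot{u}_1 = \Delta u_1 + \varepsilon^\gamma a(x)\dot{W}_t(x)$, $u_1(0,\cdot) = 0$; since the noise is additive and independent of the solution, the \emph{same} process $u_1$ appears in both decompositions, and it admits a $P$-a.s.\ jointly continuous (in fact locally H\"older) modification. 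Consequently, for $P$-a.e.\ $\omega$, the functions $\bar{u}_2$ and $\underbar{u}_2$ are classical solutions of the pathwise parabolic equations $\partial_t \bar{u}_2 = \Delta \bar{u}_2 + \tfrac{1}{\varepsilon} f(u_1 + \bar{u}_2)$ and $\partial_t \underbar{u}_2 = \Delta \underbar{u}_2 + \tfrac{1}{\varepsilon} f(u_1 + \underbar{u}_2)$, with the ordered initial data $\underbar{u}_2(0,\cdot) = \underbar{u}(0,\cdot) \le \bar{u}(0,\cdot) = \bar{u}_2(0,\cdot)$.

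Next I would set $v := \bar{u}_2 - \underbar{u}_2$ and apply the mean value theorem to see that $v$ solves the linear equation
\begin{align}
\dot{v}(t,x) = \Delta v(t,x) + \tfrac{1}{\varepsilon}\, c(t,x)\, v(t,x), \qquad v(0,\cdot) \ge 0, \nonumber
\end{align}
with coefficient $c(t,x) := \int_0^1 f'\big(u_1(t,x) + \underbar{u}_2(t,x) + \theta\, v(t,x)\big)\, d\theta$. By condition (iv) of (\ref{reaction}) one has $c(t,x) \le c$ uniformly in $(t,x)$ and in $\omega$; this uniform one-sided bound on $f'$ --- and not any a priori estimate on the solutions themselves --- is precisely what makes the argument run. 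Passing to $w := e^{-ct/\varepsilon} v$, which satisfies $\dot{w} = \Delta w + \tfrac{1}{\varepsilon}(c(t,x) - c)\, w$ with nonpositive zeroth-order term and $w(0,\cdot) \ge 0$, the maximum principle on $\mathbb{R}$ (cf.\ Theorem 9 of \cite{fr} and the comparison argument in the proof of Lemma 2.2 of \cite{f}) gives $w \ge 0$, hence $v \ge 0$, i.e.\ $\underbar{u}_2 \le \bar{u}_2$; adding back $u_1$ yields $\underbar{u}(t,x) \le \bar{u}(t,x)$. Since this argument applies on $[0,T]$ for each $T > 0$, the stated conclusion holds for all $t \in [0,\infty)$, $P$-a.s.

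The step I expect to be the main obstacle is the justification of the maximum principle on the unbounded domain $\mathbb{R}$: one must verify that $v$ lies in an admissible, Tychonov-type growth class, so that nonnegative initial data forces a nonnegative solution. This uses the growth control available for solutions of (\ref{eq:spde}) --- for instance that $u_1$ is bounded on finite time intervals (Lemma \ref{lem81}), together with the fact that the solution theory of (\ref{eq:spde}) keeps $u^\varepsilon$, and hence $u_2$, in a class of at most moderately growing functions --- rather than merely citing the interior maximum principle. The regularity of $c(t,x)$, by contrast, is not a genuine difficulty: the H\"older continuity of $u_1$ makes the equations for $u_2$ and for $v$ amenable to classical parabolic (Schauder) theory.
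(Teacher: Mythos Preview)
Your proposal is correct and follows essentially the same approach as the paper: the paper's proof is a one-line reference to the maximum-principle argument of Lemma \ref{lem82} (i.e., Lemma 2.2 of \cite{f}), and your write-up is precisely a fleshed-out version of that argument --- split off the common stochastic convolution $u_1$, subtract the remaining deterministic pieces, and apply the parabolic maximum principle from \cite{fr} using the one-sided bound (iv) on $f'$.
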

\begin{proof}
We can show this proposition by applying the maximum principle in a similar way to the proof of Lemma \ref{lem82}.
\qed \end{proof}

\subsection{Energy estimates}
Let $u(t,x)$ be a solution of PDE (\ref{eq:pde}) where $f$ satisfies (\ref{reaction}) and $u^\varepsilon _0$ satisfies (\ref{eq:ini}). We set $t = C_1 \varepsilon |\log \varepsilon |$ which is the generation time of $u$ and the constant $C_1\in (0, \frac{1}{\mu})$ is given in Proposition \ref{pro21}. Because $\kappa >1$, we imediately see that
\begin{align}
\label{est810}
dist(u(C_1\varepsilon |\log \varepsilon |,\cdot ),M^\varepsilon )\leq C\varepsilon ^{\bar{\beta}},
\end{align}
from Proposition \ref{pro21}. Proposition \ref{thm81} and (\ref{est810}) imply that the solution $u^\varepsilon$ of SPDE (\ref{eq:spde}) is in the $C(\varepsilon ^{\bar{\beta}} + \varepsilon ^{\kappa })$-neighborhood of $M^\varepsilon$, though this is not enough. In order to show the main result, we need much better estimates.

We now construct super and sub solutions of $u^\varepsilon$. We see that 
\begin{align}
u^\varepsilon(t,x) \leq \bar{u}_+ ^{\varepsilon ,\delta}(t,x)+\delta \ \ \ t\in [0,T \wedge \tau _3] ,\ x\in \mathbb{R}, \nonumber
\end{align} 
from Lemmas \ref{lem81} and \ref{lem82}. Recall that $\tau _3 := \inf \{t>0| |u_2 (t,x)|  > \delta \ for\ some\ x \in \mathbb{R} \}$. If $\tau _3 \geq C_1 \varepsilon |\log \varepsilon |$, then
\begin{align} 
\bar{u}_+ ^{\varepsilon ,\delta}(C_1 \varepsilon |\log \varepsilon | \wedge \tau _3,x) & = \bar{u}_+ ^{\varepsilon ,\delta}(C_1 \varepsilon |\log \varepsilon | ,x)\nonumber \\
&\leq m(\varepsilon ^{-\frac{1}{2}} (x-\xi_0 - C \varepsilon^{\bar{\beta}})) + C' \varepsilon ^{\kappa } \nonumber
\end{align} 
by applying Proposition \ref{pro21} to $\bar{u}_+ ^{\varepsilon ,\delta}$ for $\delta = \varepsilon ^{\kappa }$ and $\kappa >0$. The function $m$ is defined in (\ref{ode11}). And we see that
\begin{align}
\int _{\mathbb{R} \backslash [-2,2]} | u^\varepsilon (C_1 \varepsilon |\log \varepsilon |,x) - m(\varepsilon ^{-\frac{1}{2}} x)|^2 dx \leq \varepsilon ^{2\kappa } \nonumber
\end{align}
from Propositions \ref{pro21} and \ref{thm81}.

We consider $t= C_1\varepsilon |\log \varepsilon |$ as an initial time. Namely, we consider the SPDE (\ref{eq:spde}) which is replaced $u_0 ^\varepsilon$ by $u ^\varepsilon (C_1\varepsilon |\log \varepsilon |, x)$. We can construct super and sub solutions $u_\pm ^\varepsilon$ for SPDE (\ref{eq:spde}) which satisfy
\begin{align}
\label{in}
\begin{cases}
\text{(i)} u_\pm ^\varepsilon \text{ obey the SPDE (\ref{eq:spde}) for all }t > 0, \\
\text{(ii)} dist(u_\pm ^\varepsilon (t,\cdot),M^\varepsilon )\leq C \varepsilon ^{\kappa }\ (\text{for all }t\in [ 0 , (\frac{1}{\mu}-C_1)\varepsilon |\log \varepsilon |\wedge \tau _2 \wedge \tau _3]).
\end{cases}
\end{align}
Indeed,  by combining the estimates as above, we take an initial value of super solution as follows:
\begin{align}
\label{ini3}
u_+ ^\varepsilon (0,x):=(1 - \chi_1 (x))u^\varepsilon (0,x) + \chi_2 (x) (m(\varepsilon ^{-\frac{1}{2}} (x-\xi_0 - C \varepsilon^{\bar{\beta}})) + C' \varepsilon ^{\kappa }),
\end{align}
where $\chi _1$ and $\chi _2$ are some positive cutoff functions in $C_0 ^\infty (\mathbb{R})$ which take values in $[0,1]$. The function $\chi _1$ takes 1 when $x\in [-1,1]$ and takes 0 when $x\in \mathbb{R} \backslash [-2,2]$. The function $\chi _2$ takes 1 when $x\in [-2,2]$ and takes 0 when $x\in \mathbb{R} \backslash [-3,3]$. We can check easily that $u_+ ^\varepsilon (0,x)\geq u ^\varepsilon (0,x)$ and that $u_+ ^\varepsilon (t,x)$ dominates the solution $u ^\varepsilon (t,x)$ for all $t \in [0,\varepsilon ^{-2\gamma -\frac{1}{2}}T]$ and $x\in \mathbb{R}$ from Proposition \ref{pro81}. Here super solution $u_+ ^\varepsilon (t,x)$ satisfies (ii) of (\ref{in}) because of Lemma 9.1 of \cite{f} and Proposition \ref{thm81} in this section. We can construct $u_- ^\varepsilon$ in a similar way. Indeed, we can take an initial value of $u_- ^\varepsilon$;
\begin{align}
u_- ^\varepsilon (0,x):=(1 - \chi_1 (x))u^\varepsilon (0,x) + \chi_2 (x) (m(\varepsilon ^{-\frac{1}{2}} (x-\xi_0 + C \varepsilon^{\bar{\beta}})) - C' \varepsilon ^{\kappa }).
\end{align}
The functions $\chi_1$ and $\chi_2$ are same as above, and $u_- ^\varepsilon$ also satisfies (ii) of (\ref{in}).

Now we show that $u_\pm ^\varepsilon$ stay in the $\varepsilon ^{\kappa '}$-neighborhood of $M^\varepsilon$ in $L^2$-sense, not only for $t \in [0,(\frac{1}{\mu}-C_1)\varepsilon |\log \varepsilon |]$ but for $t \in [0,\varepsilon ^{-2\gamma -\frac{1}{2}}T]$ for some $1<\kappa ' <\kappa $ with high probability. In order to show this, we prove that $u_\pm ^\varepsilon$ enters the $\varepsilon ^{\kappa '}$-neighborhood of $M^\varepsilon$ in the $H^1$-sense.

We only consider the super solution $u_+ ^\varepsilon$. We change the scale of the solution $u_+ ^\varepsilon$ in time and space variables as below;
\begin{align}
v(t,x) := u_+ ^\varepsilon (\varepsilon ^{-2\gamma-\frac{1}{2}}t,\varepsilon ^{\frac{1}{2}}x),\ \ \ t\in[0,\infty ), \ x\in \mathbb{R}. \nonumber
\end{align}
We define an approximation of the function
$u^\delta (t,x):= (\rho ^{\delta(\cdot)} (\cdot) \ast u(t, \cdot ))(x)$ where $\rho$ is the function satisfying
\begin{align}
 \begin{cases}
  \text{(i)}\int_\mathbb{R} \rho (t)dt=1, & \nonumber \\
  \text{(ii)}\rm{supp} \rho \subset [0,1], & \nonumber \\
  \text{(iii)}\rho \in C^\infty (\mathbb{R} ). \nonumber \\
 \end{cases}
\end{align}
And $\rho^{\delta(x)}$ satisfies the following conditions;
\begin{align}
\begin{cases}
\rho^{\delta(x)} = \frac{1}{\delta}\rho( \frac{x}{\delta}) \ \ \ (|x|\leq \varepsilon ^{-\frac{1}{2}}+1), \nonumber \\
\rho^{\delta(x)} = \frac{1}{\delta(x)}\rho( \frac{x}{\delta(x)}) \ \ \ ( \varepsilon ^{-\frac{1}{2}}+1\leq |x| \leq \varepsilon ^{-\frac{1}{2}} +2), \nonumber \\
\rho^{\delta(x)} = \rho ^0 \ \ \ (|x|\geq \varepsilon ^{-\frac{1}{2}}+2 ), \nonumber
\end{cases}
\end{align}
where we denote $\rho ^0 \ast u =u$ formally, $\delta (\cdot) \in C^\infty (\mathbb{R})$ and  $0 \leq \delta (x) \leq \delta$. We can see the precise conditions of this convolution in Section 4 and 6 of \cite{f}.

Before the proof, we state the SPDE which $v(t,x)$ satisfies in law sense;
\begin{align}
\dot{v}(t,x) = \varepsilon ^{-2\gamma-\frac{3}{2}} \{ \Delta v + f(v) \} + \varepsilon ^{-\frac{1}{2}} a(\varepsilon ^{\frac{1}{2}} x) \dot{W} _t (x). \nonumber
\end{align}
From Proposition \ref{thm81} and the result of the generation of interface for PDE, we only need to show the case that $dist(v(0,\cdot),M) \leq \varepsilon ^{\kappa -\frac{1}{4}}$ from the strong Markov property where $M:=M^1=\{ m(x-\eta ) | \eta \in \mathbb{R} \}$. Moreover we see that there exists unique Fermi coordinate $v_t = s(v_t) + m_{\eta(v_t)}$ if $t \in [0,(\frac{1}{\mu}-C_1)\varepsilon^{2\gamma +\frac{3}{2}} |\log \varepsilon |\wedge \varepsilon^{2\gamma +\frac{1}{2}} \tau_2 \wedge\varepsilon^{2\gamma +\frac{1}{2}} \tau_3]$ from (ii) of (\ref{in}). See Section \ref{sec1} for Fermi coordinate.

\begin{lemma}
\label{lem83}
For each $T>0$, $t\in [0,T\wedge \tau _1]$ and $p>1$ there exist positive random variables $Y^\varepsilon(\omega )$, $Z ^\varepsilon(\omega ) \in L^p(\Omega )$ such that $\sup _{0<\varepsilon <1}E[(Y^\varepsilon )^p] < \infty ,\ \sup _{0<\varepsilon <1}E[(Z ^\varepsilon )^p] < \infty$,
\begin{align}
\| v_t - v_t ^\delta \|_{L^2(\mathbb{R})} \leq Y^\varepsilon \delta + Z^\varepsilon \varepsilon ^{\frac{1}{16} + \frac{3\gamma}{4} -\alpha} \delta ^{\frac{1}{4} -\alpha '} \nonumber
\end{align}
for sufficiently small $\alpha$ and $\alpha ' >0$
\end{lemma}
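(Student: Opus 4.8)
The plan is to split $v$ into a stochastic convolution plus a smoother remainder and to bound the mollification error of each. Write $v=v^{1}+v^{2}$, where $v^{1}_{t}(x)=u_{1}(\varepsilon^{-2\gamma-\frac12}t,\varepsilon^{\frac12}x)$ is the rescaled stochastic convolution of Proposition~\ref{thm81} --- equivalently, $v^{1}$ solves, in law, the linear stochastic heat equation $\dot{v}^{1}=\varepsilon^{-2\gamma-\frac32}\Delta v^{1}+\varepsilon^{-\frac12}a(\varepsilon^{\frac12}x)\dot{W}_{t}(x)$ with $v^{1}(0)=0$ --- and $v^{2}:=v-v^{1}$ solves the pathwise random PDE $\dot{v}^{2}=\varepsilon^{-2\gamma-\frac32}\{\Delta v^{2}+f(v^{1}+v^{2})\}$, $v^{2}(0)=v(0)$. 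Since $\rho^{\delta(\cdot)}$ is the identity outside $\{|x|\le\varepsilon^{-\frac12}+2\}$, has bandwidth $\le\delta$ everywhere, and is smooth across the transition annulus $\varepsilon^{-\frac12}+1\le|x|\le\varepsilon^{-\frac12}+2$ --- where $v$ is within $O(\varepsilon^{\kappa})$ of $\pm1$ by Proposition~\ref{thm82} --- it suffices to estimate $\|v^{1}_{t}-(v^{1}_{t})^{\delta}\|_{L^{2}}$ and $\|v^{2}_{t}-(v^{2}_{t})^{\delta}\|_{L^{2}}$, the annulus contributing only terms of a strictly lower order in $\varepsilon$.

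For the smooth remainder I would use the standard mollification inequality $\|g-\rho^{\delta}*g\|_{L^{2}}\le C\delta\|\nabla g\|_{L^{2}}$, so that $\|v^{2}_{t}-(v^{2}_{t})^{\delta}\|_{L^{2}}\le C\delta\|\nabla v^{2}_{t}\|_{L^{2}}$, and then bound $\|\nabla v^{2}_{t}\|_{L^{2}}$ uniformly in $\varepsilon$ and in $t\in[0,T\wedge\tau_{1}]$, with all moments bounded uniformly in $\varepsilon$. For the latter I would invoke the energy estimates of \cite{f} (Sections~4 and 6): on $[0,\tau_{1}]$ the solution lies in $[-2C_{0},2C_{0}]$, so the forcing $\varepsilon^{-2\gamma-\frac32}f(v^{1}+v^{2})$ differs from the gradient-flow forcing of the rescaled Ginzburg--Landau functional only by $\varepsilon^{-2\gamma-\frac32}(f(v^{1}+v^{2})-f(v^{2}))=O(\varepsilon^{-2\gamma-\frac32}\|v^{1}\|_{L^{2}})$; and, because by (ii) of~(\ref{in}) the remainder $v^{2}$ stays $L^{2}$-close to the manifold $M=M^{1}$, where $\Delta v^{2}+f(v^{2})\approx0$ and the linearised operator $-\Delta-f'(m_{\eta})$ is coercive on the orthocomplement of $\nabla m_{\eta}$, the gradient-flow dissipation absorbs this perturbation and keeps the Dirichlet energy at the $O(1)$ level it has at the rescaled initial time. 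Setting $Y^{\varepsilon}:=C(1+\sup_{t\le T\wedge\tau_{1}}\|\nabla v^{2}_{t}\|_{L^{2}})$ then yields the first term.

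For the stochastic part $v^{1}$, which is Gaussian, it suffices to estimate the second moment and then pass to general $p$ by hypercontractivity. By Plancherel, $E\|v^{1}_{t}-(v^{1}_{t})^{\delta}\|_{L^{2}}^{2}$ equals $\int_{\mathbb R}|\widehat{\rho^{\delta}}(\xi)-1|^{2}E|\widehat{v^{1}_{t}}(\xi)|^{2}\,d\xi$ up to the harmless annulus error, and the spectral density obeys $E|\widehat{v^{1}_{t}}(\xi)|^{2}\le C\big((\varepsilon^{-1}T)\wedge\frac{\varepsilon^{2\gamma+\frac12}}{\xi^{2}}\big)$ --- this is the computation behind the Hilbert--Schmidt bound $\|S_{\varepsilon^{-2\gamma-\frac32}t}a\|_{HS}^{2}\le C(\varepsilon^{-2\gamma-\frac32}t)^{-1/2}\|a\|_{L^{2}}^{2}$ already used in Lemma~\ref{lem87}, now with the magnified diffusivity. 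Combining this with $|\widehat{\rho^{\delta}}(\xi)-1|\le C\big((\delta|\xi|)\wedge1\big)$ and splitting the integral at the balancing frequency $|\xi|\sim\varepsilon^{(2\gamma+\frac32)/4}\delta^{-1/2}$ gives $E\|v^{1}_{t}-(v^{1}_{t})^{\delta}\|_{L^{2}}^{2}\le C\,\varepsilon^{\frac{3\gamma}{2}+\frac18}\delta^{\frac12}$; hypercontractivity, a square root, and the arbitrarily small losses from the non-constant coefficient $a(\varepsilon^{\frac12}\cdot)$ and from the variable bandwidth $\delta(x)$ then produce the second term with $Z^{\varepsilon}$. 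Alternatively the same bound follows from the factorization method, exactly as in the proof of Lemma~\ref{lem87}, by estimating spatial increments of $v^{1}$.

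The main obstacle is the $O(1)$ bound, with uniformly bounded moments, on $\|\nabla v^{2}_{t}\|_{L^{2}}$: because the diffusivity $\varepsilon^{-2\gamma-\frac32}$ is enormous and the reaction $\varepsilon^{-2\gamma-\frac32}f$ is strong, any crude treatment of $v^{2}$ --- parabolic smoothing on its mild form, or Young's inequality on $\varepsilon^{-2\gamma-\frac32}\int_{0}^{t}\|f(v^{1}_{s}+v^{2}_{s})-f(v^{2}_{s})\|_{L^{2}}^{2}\,ds$ --- produces negative powers of $\varepsilon$. One really has to exploit that $v^{2}$ hugs the slow manifold $M^{1}$, so that $\Delta v^{2}+f(v^{2})$ is small and the gradient-flow dissipation $-\varepsilon^{-2\gamma-\frac32}\|\Delta v^{2}+f(v^{2})\|_{L^{2}}^{2}$ is available to absorb the error terms; this is precisely the content of the energy machinery of \cite{f} that underlies~(\ref{in}). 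A secondary, bookkeeping point is to check that the variable-bandwidth mollifier causes only $\varepsilon^{-\alpha}\delta^{-\alpha'}$-type losses and that its transition region is negligible, which is where the exponential decay of $v$ towards $\pm1$ from Proposition~\ref{thm82} is used.
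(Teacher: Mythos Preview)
Your decomposition $v=v^{1}+v^{2}$ and the separate treatment of the two pieces is exactly the strategy of Funaki's Lemmas~4.1 and~4.2, which is all the paper cites here; the paper's own proof consists of the single sentence ``See Lemmas 4.1, 4.2 and the proof of Theorem 5.1 in \cite{f}. We note that the initial value satisfies assumptions for these lemmas and theorem.'' So you have reconstructed more than the paper actually writes, and your Plancherel/factorization computation for $v^{1}$ lands on the correct exponent $\varepsilon^{\frac{1}{16}+\frac{3\gamma}{4}}\delta^{\frac14}$.

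There is, however, a real issue in your argument for the $O(1)$ bound on $\|\nabla v^{2}_{t}\|_{L^{2}}$. You invoke (ii) of~(\ref{in}) to say that $v^{2}$ hugs $M$ and then use coercivity of the linearised operator on the orthocomplement of $\nabla m_{\eta}$. But (ii) of~(\ref{in}) is only known on the \emph{short} interval $[0,(\tfrac{1}{\mu}-C_{1})\varepsilon|\log\varepsilon|\wedge\tau_{2}\wedge\tau_{3}]$ in the original time variable, whereas the present lemma is stated for all $t\in[0,T\wedge\tau_{1}]$ in the \emph{rescaled} time; and in fact the long-time closeness to $M$ is precisely what Lemmas~\ref{lem84}--\ref{lem85} and Proposition~\ref{thm83} are about to establish, using the present lemma as input. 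So the appeal to~(\ref{in}) here is circular.

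The actual mechanism in \cite{f} is simpler and does not need closeness to $M$: one uses the (shifted) Ginzburg--Landau energy $\mathcal{H}(v^{2})=\int\big(\tfrac12|\nabla v^{2}|^{2}+F(v^{2})-F(\pm1)\big)\,dx$ as a Lyapunov functional for the random PDE satisfied by $v^{2}$. Since $F$ is bounded below, $\|\nabla v^{2}_{t}\|_{L^{2}}^{2}\le 2\mathcal{H}(v^{2}_{t})+C$, and the time derivative of $\mathcal{H}$ produces the dissipative term $-\varepsilon^{-2\gamma-\frac32}\|\Delta v^{2}+f(v^{2})\|_{L^{2}}^{2}$ plus a cross term involving $f(v^{1}+v^{2})-f(v^{2})$, which is controlled on $\{t\le\tau_{1}\}$ by the $L^{\infty}$ bound on $v$ and the smallness of $v^{1}$ (Lemma~\ref{lem81}). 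No spectral gap or proximity to $M$ enters. The only nontrivial verification --- and the one thing the paper does flag --- is that the initial datum~(\ref{ini3}) has $\mathcal{H}(v^{2}(0))$ bounded uniformly in $\varepsilon$, which follows from the explicit form of $u^{\varepsilon}_{+}(0,\cdot)$ and the decay estimates in Proposition~\ref{thm82}.
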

\begin{proof}
See Lemmas 4.1, 4.2 and the proof of Theorem 5.1 in \cite{f}. We note that the initial value satisfies assumptions for these lemmas and theorem.
\qed \end{proof}

\begin{lemma}
\label{lem84}
We define a stopping time $\tau ^\delta := \inf \{t>0 | \| s(v_t ^\delta) \|_{H^1} \leq \varepsilon ^{\kappa '} \}$ for $\kappa ' >1$. If we can take $\kappa > \kappa '$ which satisfy $(\kappa ' +\frac{21}{40} + \frac{\gamma}{10})\vee 2\kappa ' < \kappa < \gamma -\frac{C_f}{\mu}$ and $1< \kappa ' <\frac{1}{20} + \frac{\gamma}{5}$, then there exists a sequence $\{ \delta _\varepsilon \}$, which converges to $0$ as $\varepsilon \to 0$, such that
\begin{align}
\lim _{\varepsilon \downarrow 0}P\left( \tau ^{\delta_\varepsilon } \leq \varepsilon ^{2\gamma +\frac{3}{2}+\alpha} |\log \varepsilon| \right) =1, \nonumber
\end{align}
for sufficiently small $\alpha >0$.
\end{lemma}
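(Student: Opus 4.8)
The strategy is to combine the $L^2$-confinement of the super solution near $M$ with the strong parabolic smoothing provided by the large coefficient $\varepsilon^{-2\gamma-\frac32}$, using the energy machinery of Funaki (\cite{f}, Sections~4--6) to absorb the noise. As a reduction, working on the intersection of the high-probability events $\{\tau_1\text{ large}\}$, $\{\tau_2\text{ large}\}$, $\{\tau_3\geq1\}$ (Propositions \ref{thm31}, \ref{thm82} and Lemmas \ref{lem81}, \ref{lem82}) and invoking Proposition \ref{thm81} with the strong Markov property as indicated in the text, one may assume $\mathrm{dist}(v(0,\cdot),M)\leq\varepsilon^{\kappa-\frac14}$. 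On this event the rescaled form of (ii) in (\ref{in}) gives $\mathrm{dist}(v(t,\cdot),M)\leq C\varepsilon^{\kappa-\frac14}$, hence $\|s(v_t)\|_{L^2}\leq C\varepsilon^{\kappa-\frac14}$, for all $t$ in an interval of the form $[0,(\tfrac1\mu-C_1)\varepsilon^{2\gamma+\frac32}|\log\varepsilon|\wedge\cdots]$, which comfortably contains $[0,\varepsilon^{2\gamma+\frac32+\alpha}|\log\varepsilon|]$; in particular the Fermi coordinate $v_t=m_{\eta(v_t)}+s(v_t)$ and its mollified analogue are defined throughout.

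For the estimate itself, split $v_t$ into its stochastic convolution part $z_t$, solving $\dot z=\varepsilon^{-2\gamma-\frac32}\Delta z+\varepsilon^{-\frac12}a(\varepsilon^{\frac12}\cdot)\dot W$, and the remaining regular part. A rescaled version of the second-moment bound behind Lemma \ref{lem81} shows that $z_t$ is of polynomial order $\varepsilon^{\gamma+\frac14-\alpha'}$ in $C^{\frac12-\alpha'}(\mathbb R)$ up to time $\varepsilon^{2\gamma+\frac32+\alpha}|\log\varepsilon|$, so its mollification contributes at most $\delta^{-1/2}\varepsilon^{\gamma+\frac14-\alpha'}$ to $\|s(v_t^\delta)\|_{H^1}$; for the regular part, the smoothing and contraction of the semigroup generated by $\varepsilon^{-2\gamma-\frac32}(\Delta+f'(m))$ — whose restriction to $(\nabla m)^\perp$ has a positive spectral gap — upgrade the bound $\|s(v_0)\|_{L^2}\leq\varepsilon^{\kappa-\frac14}$ into $\|\nabla s(v_t)\|_{L^2}\lesssim(\varepsilon^{-2\gamma-\frac32}t)^{-1/2}\varepsilon^{\kappa-\frac14}+(\text{higher order})$, which at $t=\varepsilon^{2\gamma+\frac32+\alpha}|\log\varepsilon|$ is $\lesssim\varepsilon^{\kappa-\frac14-\frac\alpha2}$. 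These estimates are organized, following \cite{f}, through the excess Ginzburg--Landau energy
\[
\mathcal G(w):=\int_{\mathbb R}\Bigl\{\tfrac12|\nabla w|^2+F(w)-\tfrac12|\nabla m|^2-F(m)\Bigr\}dx,
\]
for which $c_1\|s(w)\|_{H^1}^2\leq\mathcal G(w)\leq c_2\|s(w)\|_{H^1}^2$ near $M$ and Itô's formula applied to $\mathcal G(v_t^\delta)$ produces a dissipation term $-c\,\varepsilon^{-2\gamma-\frac32}\mathcal G(v_t^\delta)\,dt$, an Itô correction from the noise that is singular in $\varepsilon$ and $\delta$ but, integrated over a time of length $\varepsilon^{2\gamma+\frac32+\alpha}|\log\varepsilon|$, stays below $\varepsilon^{2\kappa'}$ as long as $\delta$ is not taken too small, a mean-zero martingale with $\varepsilon$-uniformly controlled bracket, and the mollification commutators $\rho^\delta\ast\Delta v-\Delta v^\delta$ and $\rho^\delta\ast f(v)-f(v^\delta)$ bounded via Lemma \ref{lem83}. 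Adding the two contributions and applying Chebyshev's inequality, $\|s(v_t^\delta)\|_{H^1}\leq\varepsilon^{\kappa'}$ holds at $t=\varepsilon^{2\gamma+\frac32+\alpha}|\log\varepsilon|$ with probability tending to $1$, once $\kappa-\frac14-\frac\alpha2>\kappa'$ — automatic since $\kappa>2\kappa'$ — and $\delta$ is chosen with $\delta^{-1/2}\varepsilon^{\gamma+\frac14}\lesssim\varepsilon^{\kappa'}$.

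It remains to select $\delta=\delta_\varepsilon\to0$ reconciling the opposing demands: $\|v_t-v_t^\delta\|_{L^2}\ll\varepsilon^{\kappa'}$, which by Lemma \ref{lem83} forces $\delta$ and $\varepsilon^{\frac1{16}+\frac{3\gamma}4-\alpha}\delta^{\frac14-\alpha'}$ to be $o(\varepsilon^{\kappa'})$, against the requirement that $v^\delta$ retain enough spatial regularity for the $H^1$-bounds, the noise correction and the commutator errors above, i.e.\ $\delta$ not too small relative to $\varepsilon$ (concretely $\delta\gtrsim\varepsilon^{2(\gamma+\frac14-\kappa')}$). Matching these requirements is exactly what produces the hypotheses $(\kappa'+\tfrac{21}{40}+\tfrac{\gamma}{10})\vee2\kappa'<\kappa<\gamma-\tfrac{C_f}{\mu}$ and $1<\kappa'<\tfrac1{20}+\tfrac\gamma5$; under them one takes $\delta_\varepsilon=\varepsilon^\theta$ with $\theta>0$ in the resulting nonempty window. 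The structural ingredients — spectral gap, parabolic smoothing, coercivity of $\mathcal G$ — are borrowed essentially verbatim from \cite{f}; the main difficulty, and the step most prone to slip, is the power bookkeeping: propagating the exponents of $\varepsilon$ and $\delta$ through the Itô expansion of $\mathcal G(v_t^\delta)$, the noise correction, the stochastic convolution estimate and the mollification commutators, and checking that the stated conditions on $\gamma,\kappa,\kappa'$ indeed leave a nonempty window for $\theta$.
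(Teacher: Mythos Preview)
Your approach differs substantially from the paper's, and in a way that matters. You run It\^o's formula on the excess energy $\mathcal G(v_t^\delta)$ and rely on spectral-gap dissipation and parabolic smoothing globally in space; that is much closer to the machinery used for Lemma~\ref{lem85} (via Proposition~5.4 of \cite{f}) to \emph{maintain} the $H^1$-bound once it has been reached. For the present lemma---showing the bound is \emph{first attained} within time $\varepsilon^{2\gamma+\frac32+\alpha}|\log\varepsilon|$---the paper exploits a structural feature you never mention: after rescaling, the noise $a(\varepsilon^{1/2}\cdot)$ is supported in $[-\varepsilon^{-1/2},\varepsilon^{-1/2}]$. The paper then splits $\|\nabla s^\delta(v_t)\|_{L^2}^2$ spatially. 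On the inner region $|x|\le\varepsilon^{-1/2}+1$ no dynamics is used at all, only the crude algebraic bound $\int|(\nabla\rho^\delta)\ast s(v_t)|^2\lesssim \varepsilon^{-1/2}\delta^{-1/2}\|s(v_t)\|_{L^2}^2$, which already gives $\le\varepsilon^{2\kappa'}$ under $2\kappa'<\kappa$ once one fixes the explicit scale $\delta_\varepsilon=\varepsilon^{\frac{1}{10}+\frac{2\gamma}{5}}$ (the same as in \cite{f}). On the outer regions $I_\varepsilon^\pm$ the equation for $v_t$ is \emph{deterministic}; the paper multiplies by $s_t:=v_t-m$, integrates by parts, and obtains a time-integrated bound $\int_0^T\!\!\int_{I_\varepsilon^\pm}|\nabla s_t|^2\le C\varepsilon^{2\gamma+\frac12+2\kappa}+CT\varepsilon^{\kappa}$, with the boundary terms at $|x|=\varepsilon^{-1/2}+1$ controlled by Proposition~\ref{thm82} and Lemma~6.2 of \cite{f}. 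Dividing by $T=\varepsilon^{2\gamma+\frac32+\alpha}|\log\varepsilon|$ forces the infimum of the outer gradient energy over $[0,T]$ below $\varepsilon^{2\kappa'}$. The $L^2$-part of $\|s(v_t^\delta)\|_{H^1}$ and the transition annuli are handled by triangle inequalities and Lemma~\ref{lem83}.

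Your route might be salvageable, but as written it has genuine gaps. The dissipation produced by It\^o on $\mathcal G$ is $-\varepsilon^{-2\gamma-\frac32}\|\Delta v_t^\delta+f(v_t^\delta)\|_{L^2}^2\,dt$, not $-c\,\varepsilon^{-2\gamma-\frac32}\mathcal G(v_t^\delta)\,dt$; passing from one to the other requires a coercivity inequality of the form $\|\Delta w+f(w)\|_{L^2}^2\ge c\,\mathcal G(w)$ uniformly in the tube, which is not the same statement as the spectral gap on $(\nabla m)^\perp$ and is not supplied. Your linear smoothing estimate $\|\nabla s(v_t)\|_{L^2}\lesssim(\varepsilon^{-2\gamma-\frac32}t)^{-1/2}\varepsilon^{\kappa-\frac14}$ treats the full nonlinearity and the noise as lower-order without quantifying them over the relevant time window, and in any case the left-hand side is undefined on the inner region before mollification---at which point you must track powers of $\delta$, exactly what the paper does. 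The paper's spatial split sidesteps both issues: the only dynamical estimate is deterministic and takes place where there is no noise, while where the noise lives no dynamical estimate is needed.
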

\begin{proof}
At first, we fix a time $t \in [0,(\frac{1}{\mu}-C_1) \varepsilon ^{2\gamma +\frac{3}{2}} |\log \varepsilon| \wedge \varepsilon^{2\gamma +\frac{1}{2}} \tau_2 \wedge \varepsilon^{2\gamma +\frac{1}{2}} \tau_3]$ at which $u^\varepsilon (\varepsilon^{-2\gamma-\frac{1}{2}}t)$ satisfies the conditions of Proposition \ref{thm82}. From the definition of Fermi coordinate, we get
\begin{align}
\label{est8-2}
\| s(v_t ^\delta) \|_{H^1} &= \| v_t ^\delta -m_{\eta(v_t ^\delta)} \|_{H^1} \nonumber \\
&\leq \| v_t ^\delta -m_{\eta(v_t )} ^\delta \|_{H^1} + \| m_{\eta(v_t )} ^\delta - m_{\eta(v_t )} \|_{H^1} + \| m_{\eta(v_t )} -m_{\eta(v_t ^\delta )} \|_{H^1}\nonumber \\
&\leq \|s^\delta (v_t)\|_{H^1} + C\delta + C' \| v_t - v_t ^\delta \|_{L^2},
\end{align}
for $t\leq (\frac{1}{\mu}-C_1) \varepsilon ^{2\gamma +\frac{3}{2}} |\log \varepsilon| \wedge \varepsilon^{2\gamma +\frac{1}{2}} \tau_2 \wedge \varepsilon^{2\gamma +\frac{1}{2}} \tau_3$. We just use the triangle inequality for the second line. We denote the approximation of $s(v_t)$ convoluted with $\rho ^\delta$ as $s^\delta (v_t)$ in the third line. We can estimate the second term of the second line by the integrability and the differentiability of $m_{\eta(v_t )} ^\delta - m_{\eta(v_t )}$ (see Lemma 5.4 of \cite{f}). From Lemma 5.5 of \cite{f} and the straight calculation, we get the estimate of the third term in the third line. And thus, we need to derive the estimate of $\|s^\delta (v_t)\|_{H^1}$ because Lemma \ref{lem83} completes these estimates if we take $\delta _\varepsilon =\varepsilon ^{\frac{1}{10}+\frac{2\gamma}{5}}$ which is the same $\delta$ as the case in Section 5 of \cite{f}. Now we consider the estimate in $L^2$-norm. An easy computation gives us
\begin{align}
\|s^\delta (v_t)\|_{L^2} &\leq \|s^\delta (v_t)-s(v_t) \|_{L^2} + \|s(v_t)\|_{L^2}\nonumber \\
&\leq \| v_t ^\delta - v_t \|_{L^2} + \| m_{\eta(v_t )} ^\delta - m_{\eta(v_t )} \|_{L^2} + \|s(v_t)\|_{L^2}\nonumber \\
&\leq \| v_t ^\delta - v_t \|_{L^2} + C\delta + \varepsilon ^{\kappa -\frac{1}{4}}. \nonumber
\end{align}
We use the triangle inequality and the definition of Fermi coordinate throughout these estimate. And thus Lemma \ref{lem83} and order of $\delta _\varepsilon$ complete the estimate. Next we need to consider $\|\nabla s^\delta (v_t)\|_{L^2}$ where $\nabla$ means $\frac{d}{dx}$. We divide the integration $\|\nabla s^\delta (v_t)\|_{L^2} ^2$ into four parts as below.
\begin{align}
\label{est8-1}
\int_\mathbb{R} (\nabla s^\delta (v_t) )^2 dx = &\int_{I_\varepsilon ^- \cup I_\varepsilon ^+}(\nabla s (v_t) )^2 dx + \int_{-\varepsilon ^{-\frac{1}{2}}-1} ^{\varepsilon ^{-\frac{1}{2}}+1} (\nabla s^\delta (v_t) )^2 dx\nonumber \\
&+\left( \int_{-\varepsilon ^{-\frac{1}{2}}-2} ^{-\varepsilon ^{-\frac{1}{2}}-1} + \int_{\varepsilon ^{-\frac{1}{2}}+1} ^{\varepsilon ^{-\frac{1}{2}}+2}\right) \{ (\nabla s^\delta (v_t) )^2 - (\nabla s(v_t) )^2 \}dx,
\end{align}
where $I_\varepsilon ^- := (-\infty,-\varepsilon ^{-\frac{1}{2}}-1]$ and $I_\varepsilon ^+ := [\varepsilon ^{-\frac{1}{2}}+1,\infty)$. At first, we derive the estimate for the second term of right hand side of (\ref{est8-1}). From the definition of $s^\delta (v_t)$, $\delta _\varepsilon$ and $\kappa $, a straight calculation gives us
\begin{align}
\int_{-\varepsilon ^{-\frac{1}{2}}-1} ^{\varepsilon ^{-\frac{1}{2}}+1} (\nabla s^\delta (v_t) )^2 dx &= \int_{-\varepsilon ^{-\frac{1}{2}}-1} ^{\varepsilon ^{-\frac{1}{2}}+1} ((\nabla \rho ^\delta) \ast s(v_t) )^2 dx \nonumber \\
&\leq C(\varepsilon ^{-\frac{1}{2}} +1)\delta^{-\frac{1}{2}}\int_\mathbb{R} ( s(v_t) )^2 dx \leq C(\varepsilon ^{-\frac{1}{2}} +1)\delta^{-\frac{1}{2}} \varepsilon ^{2\kappa -\frac{1}{2}} \leq\varepsilon ^{2\kappa '} . \nonumber
\end{align}
Next we consider the last term of (\ref{est8-1}). Note that $s^\delta (v_t)$ and $s (v_t)$ are both differentiable if $|x|\geq \varepsilon ^{-\frac{1}{2}}+1$. Reminding the estimate of $ \| m_{\eta(v_t )} ^\delta - m_{\eta(v_t )} \|_{H^1}$ in (\ref{est8-2}), we can assert that the last term is of order $O(\delta ^2)$. At last, we show that the value of the first term of (\ref{est8-1}) becomes less than $\varepsilon ^{2\kappa '}$ before the time of order $O(\varepsilon ^{2\gamma +\frac{3}{2}} |\log \varepsilon |)$ on the event $\{ (\frac{1}{\mu}-C_1) \varepsilon ^{2\gamma +\frac{3}{2}} |\log \varepsilon| < \varepsilon^{2\gamma +\frac{1}{2}}\tau_2 \wedge \varepsilon^{2\gamma +\frac{1}{2}} \tau_3 \}$. This completes the estimate. Here we only consider the problem on the domain $I_\varepsilon ^+$. We regard $v_t$ as the solution of the boundary value problem
\begin{align}
\begin{cases}
\dot{v}(t,x) = \varepsilon ^{-2\gamma-\frac{3}{2}} \{ \Delta v + f(v) \} , \nonumber \\
v(t,\varepsilon ^{-\frac{1}{2}}+1)=V_t(\omega ),\ \ \ v(0,x)=v_0(x), \nonumber
\end{cases}
\end{align}
for each fixed $\omega \in \Omega$. We note that the boundary value $V_t(\omega )$ is almost 1 for all $t \in [0,\tau _3]$ from the observation in Proposition \ref{thm82}. From Proposition \ref{thm82} and the condition of the solution $m$, $\| m-m_{\eta (v_t)}\|_{L^2(I_\varepsilon ^+)}$ decays as $\varepsilon \to 0$, and its order is $O(\exp (-\frac{C}{\varepsilon}))$ (see Lemma 2.1 of \cite{ham}). And thus, this integral is negligible. The triangle inequality allows us to consider only $s_t:= v_t - m$. For simplicity we use the notation $\nabla$, $\Delta$ and $\partial _t$. From the form of the PDE, for all $T\in [0,(\frac{1}{\mu}-C_1) \varepsilon ^{2\gamma +\frac{3}{2}} |\log \varepsilon| \wedge \varepsilon^{2\gamma +\frac{1}{2}} \tau_2 \wedge \varepsilon^{2\gamma +\frac{1}{2}} \tau _3]$, we obtain
\begin{align}
\int _0^T \int_{I_\varepsilon ^+}\partial _t s(t,x) s(t,x) dx dt =  \varepsilon ^{-2\gamma-\frac{3}{2}} &\int _0^T \int_{I_\varepsilon ^+}\Delta s(t,x) s(t,x) dx dt \nonumber \\
&+  \varepsilon ^{-2\gamma-\frac{3}{2}} \int _0^T \int_{I_\varepsilon ^+}\{ f(v(t,x))-f(m(x)) \} s(t,x) dx dt, \nonumber
\end{align}
and the integration by parts, the equality $\partial _t s(t,x) s(t,x)=\frac{1}{2}\partial _t (s(t,x))^2$ and the boundary condition $v(t,\infty)=1$ give us
\begin{align}
\int _0^T \int_{I_\varepsilon ^+} ( \nabla s(t,x) )^2 dx dt\leq \frac{\varepsilon^{2\gamma+\frac{3}{2}}}{2} \|s_0 \|_{L^2(I_\varepsilon ^+)} ^2 &+ \int _0^T \int_{I_\varepsilon ^+}\{ f(v(t,x))-f(m(x)) \} s(t,x) dx dt\nonumber \\
&-\int _0^T \{ \nabla s(t,\varepsilon ^{-\frac{1}{2}}+1) \} s(t,\varepsilon ^{-\frac{1}{2}}+1) dt. \nonumber
\end{align}
We can derive the same estimate for $I_\varepsilon ^-$ in a similar way, and hence we obtain
\begin{align}
\int _0^T \int_{I_\varepsilon ^+ \cup I_\varepsilon ^-} ( \nabla s(t,x) )^2 dx dt\leq \frac{\varepsilon^{2\gamma+\frac{3}{2}}}{2} \|s_0 \|_{L^2(I_\varepsilon ^+ \cup I_\varepsilon ^-)} ^2 &+ \int _0^T \int_{I_\varepsilon ^+ \cup I_\varepsilon ^-}\{ f(v(t,x))-f(m(x)) \} s(t,x) dx dt\nonumber \\
&-\int _0^T \{ \nabla s(t,\varepsilon ^{-\frac{1}{2}}+1) \} s(t,\varepsilon ^{-\frac{1}{2}}+1) dt\nonumber \\
&+\int _0^T \{ \nabla s(t,-\varepsilon ^{-\frac{1}{2}}-1) \} s(t,-\varepsilon ^{-\frac{1}{2}}-1) dt.\nonumber
\end{align}
The first term of the right hand side is obviously dominated by $\varepsilon ^{2\gamma+\frac{1}{2}+2\kappa }$ because we consider the initial value as (\ref{ini3}). The second term is negative because $v(t,x)$ and $m(x)$ are almost 1 if $|x|\geq \varepsilon ^{-\frac{1}{2}}+1$ from Proposition \ref{thm82}. From Lemma 6.2 in \cite{f}, $\nabla s(t,\varepsilon ^{-\frac{1}{2}}+1)$ and $ \nabla s(t,-\varepsilon ^{-\frac{1}{2}}-1)$ are bounded for all $t \in [0,\tau _2]$. And thus, the third and fourth terms are dominated by $CT \varepsilon ^{\kappa}$ ($0< \kappa <\gamma$) from Proposition \ref{thm82}. To sum up all of these estimation, we get
\begin{align}
T \underset{t\in[0,T]}{\inf} \int_{I_\varepsilon ^+ \cup I_\varepsilon ^-} ( \nabla s(t,x) )^2 dx &\leq \int _0^{T} \int_{I_\varepsilon ^+ \cup I_\varepsilon ^-} ( \nabla s(t,x) )^2 dx dt \nonumber \\
&\leq C \varepsilon ^{2\gamma+\frac{1}{2}+2\kappa } + CT \varepsilon ^{\kappa} , \nonumber
\end{align}
on the event $\{ T < \varepsilon^{2\gamma +\frac{1}{2}}\tau_2 \wedge \varepsilon^{2\gamma +\frac{1}{2}}\tau_3 \}$. We now take $T:=  \varepsilon ^{2\gamma +\frac{3}{2}+\alpha}|\log \varepsilon |$. From this estimate, we see that the integral $\int_{I_\varepsilon ^+ \cup I_\varepsilon ^-} ( \nabla s(t,x) )^2 dx$ becomes less than $\varepsilon ^{2 \kappa '}$ at some time $T_\varepsilon (\omega) \leq O(\varepsilon ^{2\gamma +\frac{3}{2} +\alpha } |\log \varepsilon |)$ $P$-a.s. for sufficiently small $\alpha >0$. Note that $P(\varepsilon^{2\gamma +\frac{1}{2}} \tau_2 \wedge \varepsilon^{2\gamma +\frac{1}{2}} \tau_3 \geq (\frac{1}{\mu}-C_1) \varepsilon ^{2\gamma +\frac{3}{2}} |\log \varepsilon |) \to 1$ as $\varepsilon \to 0$. This is the conclusion of the lemma.
\qed \end{proof}

\begin{lemma}
\label{lem85}
If we can take $\kappa > \kappa ' >1$ which satisfy $(\kappa' +\frac{21}{40} + \frac{\gamma}{10})\vee 2\kappa' < \gamma -\frac{C_f}{\mu}$ and $1< \kappa ' <\frac{1}{20} + \frac{\gamma}{5}$, then there exist a positive random variable $\widetilde{C}(\omega ) \in L^\infty (\Omega )$ and a sequence $\{ \delta _\varepsilon \}$ which converges to $0$ as $\varepsilon \to 0$ such that
\begin{align}
\lim _{\varepsilon \downarrow 0}P\left( \| s(v_t ^{\delta _\varepsilon}) \|_{H^1} \leq \varepsilon ^{\kappa '} \ for \ all \ t \in [\widetilde{C}(\omega ) \varepsilon^{2\gamma +\frac{3}{2}} |\log \varepsilon |,T] \right) =1. \nonumber
\end{align}
\end{lemma}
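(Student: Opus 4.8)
The plan is to combine the entrance estimate of Lemma \ref{lem84} with a forward–invariance (confinement) argument for the $\varepsilon^{\kappa'}$–tube around the standing–wave manifold, restarting the rescaled equation at the entrance time by the strong Markov property. First I would fix the sequence $\delta_\varepsilon=\varepsilon^{\frac1{10}+\frac{2\gamma}{5}}$ furnished by Lemma \ref{lem84}, so that on an event of probability tending to $1$ one has $\tau^{\delta_\varepsilon}\le\varepsilon^{2\gamma+\frac32+\alpha}|\log\varepsilon|$; since $\varepsilon^\alpha<1$ for small $\varepsilon$, on this event $\tau^{\delta_\varepsilon}$ can be written as $\widetilde C(\omega)\,\varepsilon^{2\gamma+\frac32}|\log\varepsilon|$ with a bounded $\widetilde C\in L^\infty(\Omega)$, and by continuity of $t\mapsto\|s(v_t^{\delta_\varepsilon})\|_{H^1}$ we have $\|s(v_{\tau^{\delta_\varepsilon}}^{\delta_\varepsilon})\|_{H^1}=\varepsilon^{\kappa'}$. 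Combining this with the bound (\ref{est8-2}) and Lemma \ref{lem83}, the genuine distance $\mathrm{dist}(v_{\tau^{\delta_\varepsilon}},M)$ in $H^1$ is at most $\varepsilon^{\kappa'}+C\delta_\varepsilon+C'\|v_{\tau^{\delta_\varepsilon}}-v_{\tau^{\delta_\varepsilon}}^{\delta_\varepsilon}\|_{L^2}$, which is $(1+o(1))\varepsilon^{\kappa'}$ because $\delta_\varepsilon$ and the smoothing error are $o(\varepsilon^{\kappa'})$ under the constraint $\kappa'<\frac1{20}+\frac{\gamma}{5}$. Since $\tau^{\delta_\varepsilon}$ is a stopping time, the strong Markov property lets me treat $v_{\tau^{\delta_\varepsilon}}$ as a new initial datum lying within $C\varepsilon^{\kappa'}$ of $M$ in $H^1$.

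Second, starting from such a datum I would appeal to Funaki's analysis of the rescaled SPDE in a tubular neighbourhood of the manifold $M^\varepsilon$ of standing waves (\cite{f}, Sections 6--9; in particular Lemma 9.1 and the energy estimates based on the spectral gap of $-\Delta-f'(m)$ transverse to $\nabla m$), which shows that the solution stays within an $\varepsilon^{\kappa'}$–tube of $M$ over the whole rescaled horizon $[0,T]$ with probability tending to $1$. Here I would use that the process in question is one of the super/sub solutions $u_\pm^\varepsilon$, which by (\ref{in}) already satisfy $\mathrm{dist}(u_\pm^\varepsilon(t,\cdot),M^\varepsilon)\le C\varepsilon^{\kappa}$ up to the generation time and $\kappa>\kappa'$, so that the hypotheses of that machinery are met. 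Undoing the convolution once more via (\ref{est8-2}) and Lemma \ref{lem83} turns $\mathrm{dist}(v_t,M)\le\varepsilon^{\kappa'}$ into $\|s(v_t^{\delta_\varepsilon})\|_{H^1}\le\varepsilon^{\kappa'}$, and intersecting this with the event of the first step and with $\{\varepsilon^{2\gamma+\frac12}(\tau_2\wedge\tau_3)\ge(\tfrac1\mu-C_1)\varepsilon^{2\gamma+\frac32}|\log\varepsilon|\}$ --- whose probability tends to $1$ by Propositions \ref{thm31}, \ref{thm81} and Lemma \ref{lem81} --- yields the claim for all $t\in[\widetilde C(\omega)\varepsilon^{2\gamma+\frac32}|\log\varepsilon|,T]$.

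The hard part is the confinement step. One has to check that over the long time scale $[0,\varepsilon^{-2\gamma-\frac12}T]$ in the original time variable the three error sources --- the smoothing error $\|v_t-v_t^{\delta_\varepsilon}\|_{L^2}$ from Lemma \ref{lem83}, the contribution of the noise $\varepsilon^{-\frac12}a(\varepsilon^{\frac12}x)\dot W_t$ to the Ginzburg--Landau energy $\mathcal H^\varepsilon$, and the boundary terms $\nabla s(t,\pm(\varepsilon^{-\frac12}+1))$ from the decomposition (\ref{est8-1}) --- all stay below $\varepsilon^{\kappa'}$, and that the transverse convexity of $\mathcal H^\varepsilon$ around $M^\varepsilon$ overcomes them so that the tube is genuinely attracting rather than merely reached once. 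This is precisely what the arithmetic conditions $(\kappa'+\frac{21}{40}+\frac{\gamma}{10})\vee 2\kappa'<\gamma-\frac{C_f}{\mu}$ and $1<\kappa'<\frac1{20}+\frac{\gamma}{5}$ encode: the first balances the approximation and noise errors against the energy dissipation, while the second keeps $\kappa'$ small enough for the tube to be reached quickly (Lemma \ref{lem84}) yet large enough to remain contracting. A secondary technical point is to make Funaki's estimates --- originally written for the initial datum $m(\varepsilon^{-1/2}(\cdot-\xi_0))\in M^\varepsilon$ --- apply to our merely nearby datum; this is handled exactly as in the construction of $u_\pm^\varepsilon$ above, which already relied on Lemma 9.1 of \cite{f} for data close to $M^\varepsilon$.
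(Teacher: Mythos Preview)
Your proposal is correct and follows essentially the same route as the paper: use Lemma~\ref{lem84} to enter the $\varepsilon^{\kappa'}$--tube at a stopping time of order $\varepsilon^{2\gamma+\frac32}|\log\varepsilon|$, define $\widetilde C(\omega)$ from this entrance time (the paper writes this as $\widetilde C(\omega)=(\varepsilon^{-2\gamma-\frac32}|\log\varepsilon|^{-1}\tau^{\delta_\varepsilon})\wedge 1$), and then invoke Funaki's confinement machinery to stay inside the tube on $[0,T]$. The only discrepancy is bibliographic: the paper cites specifically Proposition~5.4 of \cite{f} (pp.~241--244) for the forward--invariance step, whereas you point to Sections~6--9 and Lemma~9.1; the former is the precise location of the $H^1$--tube stability argument you describe, so you should cite that instead.
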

\begin{proof}
The same argument as in the proof of Proposition 5.4 in \cite{f} (from p.241 to 244) completes the proof of this lemma from Lemma \ref{lem84}. The positive random variable $\widetilde{C}(\omega)$ can be taken as $\widetilde{C}(\omega)=(\varepsilon^{-2\gamma -\frac{3}{2}} |\log \varepsilon |^{-1} \tau_{\delta _\varepsilon}) \wedge 1$ and it is in the class of $L^\infty (\Omega)$ from Lemma \ref{lem84}.
\qed \end{proof}

\begin{corollary}
\label{cor85}
If we can take $\kappa > \kappa ' >1$ which satisfy $(\kappa' +\frac{21}{40} + \frac{\gamma}{10})\vee 2\kappa' < \gamma -\frac{C_f}{\mu}$ and $1< \kappa ' <\frac{1}{20} + \frac{\gamma}{5}$, then there exists a positive random variable $\widetilde{C}(\omega ) \in L^\infty (\Omega )$ such that
\begin{align}
\lim _{\varepsilon \downarrow 0}P\left( s(v_t) \in H^\alpha \ for \ all \ t \in [\widetilde{C}(\omega ) \varepsilon^{2\gamma +\frac{3}{2}} |\log \varepsilon |,T] \right) =1, \nonumber
\end{align}
for all $0< \alpha < \frac{1}{4}$.
\end{corollary}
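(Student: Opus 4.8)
The plan is to promote the control on the mollified process $s(v_t^{\delta_\varepsilon})$ given by Lemma \ref{lem85}, together with the mollification error estimate of Lemma \ref{lem83}, to honest fractional Sobolev regularity of $s(v_t)$ itself. The mechanism is the Besov characterisation: a function $g\in L^2(\mathbb{R})$ whose mollification defect $\|g-\rho^{\delta}\ast g\|_{L^2}$ decays like $\delta^{s}$ uniformly in the scale $\delta$ belongs to $B^{s}_{2,\infty}(\mathbb{R})$, and $B^{s}_{2,\infty}(\mathbb{R})\subset H^\alpha(\mathbb{R})$ for every $0<\alpha<s$. Since Lemma \ref{lem83} yields the exponent $\tfrac14-\alpha'$ with $\alpha'$ arbitrarily small, this is exactly the range $\alpha<\tfrac14$ of the corollary.

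First I would fix $\omega$ in the intersection of the event of Lemma \ref{lem85}, on which $\|s(v_t^{\delta_\varepsilon})\|_{H^1}\le\varepsilon^{\kappa'}$ for all $t\in[\widetilde{C}(\omega)\varepsilon^{2\gamma+3/2}|\log\varepsilon|,T]$ and, in particular, $v_t$ stays in the tubular neighbourhood of $M$ so that the Fermi coordinate $(\eta(v_t),s(v_t))$ is well defined and $s(v_t)\in L^2(\mathbb{R})$, with the events $\{Y^\varepsilon(\omega)\le\varepsilon^{-\rho}\}\cap\{Z^\varepsilon(\omega)\le\varepsilon^{-\rho}\}$ for a fixed small $\rho>0$; these last have probability tending to $1$ by Chebyshev's inequality, because $Y^\varepsilon$ and $Z^\varepsilon$ have moments of every order bounded uniformly in $\varepsilon$ (Lemma \ref{lem83}). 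For such $\omega$ and $t$, linearity of convolution gives
\begin{align}
s(v_t)-\rho^{\delta}\ast s(v_t)=(v_t-v_t^{\delta})-\bigl(m_{\eta(v_t)}-\rho^{\delta}\ast m_{\eta(v_t)}\bigr),\nonumber
\end{align}
so, bounding the first term by Lemma \ref{lem83} (for small $\varepsilon$ the positive power of $\varepsilon$ there is $\le1$, whence $\|v_t-v_t^{\delta}\|_{L^2}\le Y^\varepsilon\delta+Z^\varepsilon\delta^{1/4-\alpha'}$) and the second by the smoothness and exponential decay of $m$ (so $\|m_{\eta}-\rho^{\delta}\ast m_{\eta}\|_{L^2}\le C\delta$), and using $\delta\le\delta^{1/4-\alpha'}$ for $\delta\le1$, one obtains
\begin{align}
\bigl\|s(v_t)-\rho^{\delta}\ast s(v_t)\bigr\|_{L^2(\mathbb{R})}\le C(\omega,\varepsilon)\,\delta^{\frac14-\alpha'}\qquad(0<\delta\le\delta_0)\nonumber
\end{align}
for every small $\alpha'>0$; the $\varepsilon$-dependent constant is harmless, since only the behaviour as $\delta\downarrow0$ enters the Sobolev membership.

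Finally I would convert this into Sobolev regularity: since $\rho$ is a smooth compactly supported probability density, $|1-\widehat{\rho}(\delta\xi)|$ is bounded below for $|\xi|\ge M\,\delta^{-1}$ with a suitable constant $M$, so the previous bound forces $\int_{|\xi|\ge R}|\widehat{s(v_t)}(\xi)|^2\,d\xi\le C\,R^{-(1/2-2\alpha')}$, i.e.\ $s(v_t)\in B^{1/4-\alpha'}_{2,\infty}(\mathbb{R})\subset H^\alpha(\mathbb{R})$ for every $\alpha<\tfrac14-\alpha'$; letting $\alpha'\downarrow0$ gives the claim for all $\alpha<\tfrac14$, simultaneously for all $t$ in the interval, on an event of probability tending to $1$. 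I expect the main obstacle to be twofold: making sure the estimate of Lemma \ref{lem83} is available uniformly over the whole range of mollification scales $\delta\in(0,\delta_0]$ (and uniformly in $t\le T\wedge\tau_1$), not merely at the single scale $\delta_\varepsilon$ used in Lemmas \ref{lem84} and \ref{lem85}; and coping with the spatially varying width $\delta(\cdot)$ of the mollifier $\rho^{\delta(\cdot)}$, which stops $\rho^{\delta}\ast$ from being a Fourier multiplier. The latter is dealt with by localisation, exactly in the spirit of Section 6 of \cite{f}: on $|x|\le\varepsilon^{-1/2}+1$, the region carrying the rough white-noise part of $v_t$, $\rho^{\delta(x)}$ equals the genuine mollifier $\delta^{-1}\rho(\cdot/\delta)$; for $|x|\ge\varepsilon^{-1/2}+2$ the solution is exponentially close to $m_{\eta(v_t)}$ (Proposition \ref{thm82}, Lemma 2.1 of \cite{ham}), so this tail is a negligible, already smooth remainder; and the intervening annulus is handled as there. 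Conceptually, the statement merely records that the only non-smooth ingredient of $v_t$ is the stochastic convolution $\varepsilon^{-1/2}\int_0^tS_{\varepsilon^{-2\gamma-3/2}(t-s)}a(\varepsilon^{1/2}\cdot)\,dW_s$, whose covariance has Fourier symbol $\sim|\xi|^{-2}$ at high frequencies and which therefore lies in $H^\alpha(\mathbb{R})$ for $\alpha<\tfrac12$; the argument above realises this with $\tfrac14$ in place of $\tfrac12$, which is enough.
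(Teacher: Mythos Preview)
Your proposal is correct and follows essentially the same route as the paper, which simply invokes Lemma~5.6 of \cite{f}: that lemma converts the mollification-defect bound of Lemmas~4.1--4.2 in \cite{f} (restated here as Lemma~\ref{lem83}) into $H^\alpha$-membership of $s(v_t)$ for $\alpha<\tfrac14$, precisely via the mechanism you describe. Your identification of the two technical points---uniformity of Lemma~\ref{lem83} over the full range of scales $\delta$ (the random variables $Y^\varepsilon$, $Z^\varepsilon$ there do not depend on $\delta$ or $t$) and the localisation needed because $\rho^{\delta(\cdot)}$ has variable width---matches exactly how Funaki handles them in Sections~4--6 of \cite{f}.
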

\begin{proof}
We get this corollary from Lemma \ref{lem85} and the same argument as Lemma 5.6 of \cite{f}.
\qed \end{proof}

\begin{proposition}
\label{thm83}
If we can take $\kappa > \kappa ' >1$ which satisfy $(\kappa' +\frac{21}{40} + \frac{\gamma}{10})\vee 2\kappa' < \gamma -\frac{C_f}{\mu}$ and $1< \kappa ' <\frac{1}{20} + \frac{\gamma}{5}$, then there exists a positive random variable $\widetilde{C}(\omega ) \in L^\infty (\Omega )$ such that
\begin{align}
\lim _{\varepsilon \downarrow 0}P \left( dist (v_t, M)\leq \varepsilon ^{\kappa '} \  for\  all\  t \in [\widetilde{C}(\omega)\varepsilon^{2\gamma +\frac{3}{2}} |\log \varepsilon |,T] \right) =1. \nonumber
\end{align}
\end{proposition}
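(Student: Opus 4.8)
The plan is to reduce the statement entirely to Lemmas \ref{lem83} and \ref{lem85}, both of which have already been established on the time interval in question. Since $dist(v_t,M)$ is an infimum over all translates of $m$, it suffices to bound $\|v_t-m_{\eta(v_t^{\delta_\varepsilon})}\|_{L^2}$ for the single translate coming from the Fermi coordinate of the mollified profile $v_t^{\delta_\varepsilon}$, where $\delta_\varepsilon=\varepsilon^{\frac{1}{10}+\frac{2\gamma}{5}}$ is the sequence from Lemma \ref{lem85} (well-definedness of $(\eta(v_t^{\delta_\varepsilon}),s(v_t^{\delta_\varepsilon}))$ on the relevant time range being part of Lemma \ref{lem84}). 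The triangle inequality then gives
\begin{align}
dist(v_t,M)\leq \|v_t-v_t^{\delta_\varepsilon}\|_{L^2}+\|v_t^{\delta_\varepsilon}-m_{\eta(v_t^{\delta_\varepsilon})}\|_{L^2}=\|v_t-v_t^{\delta_\varepsilon}\|_{L^2}+\|s(v_t^{\delta_\varepsilon})\|_{L^2}, \nonumber
\end{align}
so the task is to show that each of the two terms on the right is of strictly smaller order than $\varepsilon^{\kappa'}$ on an event of probability tending to $1$, uniformly in $t\in[\widetilde{C}(\omega)\varepsilon^{2\gamma+\frac32}|\log\varepsilon|,T]$.

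For the second term, I would use $\|s(v_t^{\delta_\varepsilon})\|_{L^2}\leq\|s(v_t^{\delta_\varepsilon})\|_{H^1}$ together with Lemma \ref{lem85}, which produces an event of probability tending to $1$ on which $\|s(v_t^{\delta_\varepsilon})\|_{H^1}\leq\varepsilon^{\kappa'}$ simultaneously for all $t$ in the stated range. Because the hypotheses on $\kappa'$ in (\ref{gamma}) are strict inequalities, I may run Lemma \ref{lem85} with a value slightly larger than the target exponent, and hence assume that this $H^1$-bound holds with an exponent strictly larger than $\kappa'$; this leaves the room needed to absorb the contribution of the first term and of constants.

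For the first term I would invoke Lemma \ref{lem83}, in its version uniform over $t\in[0,T\wedge\tau_1]$ (which is what the argument of \cite{f} supplies), bounding $\|v_t-v_t^{\delta_\varepsilon}\|_{L^2}$ by $Y^\varepsilon\delta_\varepsilon+Z^\varepsilon\varepsilon^{\frac{1}{16}+\frac{3\gamma}{4}-\alpha}\delta_\varepsilon^{\frac14-\alpha'}$, with $Y^\varepsilon,Z^\varepsilon$ having all moments bounded uniformly in $\varepsilon$. With $\delta_\varepsilon=\varepsilon^{\frac{1}{10}+\frac{2\gamma}{5}}$ the two deterministic exponents are
\begin{align}
\tfrac{1}{10}+\tfrac{2\gamma}{5}\qquad\text{and}\qquad \tfrac{1}{16}+\tfrac{3\gamma}{4}-\alpha+\left(\tfrac14-\alpha'\right)\left(\tfrac{1}{10}+\tfrac{2\gamma}{5}\right), \nonumber
\end{align}
and for $\alpha,\alpha'$ small enough both strictly exceed $\frac{1}{20}+\frac{\gamma}{5}$, hence strictly exceed $\kappa'$ by (\ref{gamma}). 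A Chebyshev estimate applied to the uniform moment bounds on $Y^\varepsilon$ and $Z^\varepsilon$ then shows that $\|v_t-v_t^{\delta_\varepsilon}\|_{L^2}$ is, on an event of probability tending to $1$, bounded by $\varepsilon^{\kappa'+c}$ for some $c>0$, uniformly in $t$. Intersecting this event with the one from Lemma \ref{lem85} (and with the event that $v$ has not left $[-2C_0,2C_0]$, i.e.\ the analogue of $\{\tau_1\geq \varepsilon^{-2\gamma-\frac12}T\}$ coming from Proposition \ref{thm31}), the displayed triangle inequality yields $dist(v_t,M)\leq \varepsilon^{\kappa'}$ for all $t\in[\widetilde{C}(\omega)\varepsilon^{2\gamma+\frac32}|\log\varepsilon|,T]$ once $\varepsilon$ is small, which is the claim; one takes $\widetilde{C}(\omega)$ as in Lemma \ref{lem85}.

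The genuinely substantive work has already been done in Lemmas \ref{lem83}--\ref{lem85}, so what remains is bookkeeping. The one point that requires care is the matching of scales: the mollification parameter $\delta_\varepsilon$ must be chosen small enough that the smoothing error $\|v_t-v_t^{\delta_\varepsilon}\|_{L^2}$ beats $\varepsilon^{\kappa'}$, and yet $\delta_\varepsilon=\varepsilon^{\frac{1}{10}+\frac{2\gamma}{5}}$ is precisely the value for which the $H^1$-control of $s(v_t^{\delta_\varepsilon})$ in Lemma \ref{lem85} holds, so one must verify that this single choice simultaneously makes all error exponents strictly larger than $\kappa'$ — which is exactly where the constraint $\kappa'<\frac{1}{20}+\frac{\gamma}{5}$ in (\ref{gamma}) enters. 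The other mild subtlety is that every estimate must be made uniform in $t$ before the good events are intersected; here this is automatic, since the random variables $Y^\varepsilon,Z^\varepsilon$ of Lemma \ref{lem83} and the bound of Lemma \ref{lem85} already carry suprema in $t$.
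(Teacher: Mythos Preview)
Your proposal is correct and follows exactly the route the paper indicates: the paper's proof simply invokes Lemma \ref{lem85} together with the argument of Theorem~5.1 in \cite{f}, and what you have written is precisely that argument spelled out --- bound $dist(v_t,M)$ by $\|v_t-v_t^{\delta_\varepsilon}\|_{L^2}+\|s(v_t^{\delta_\varepsilon})\|_{L^2}$, control the first term by Lemma \ref{lem83} and the second by Lemma \ref{lem85}, and check that with $\delta_\varepsilon=\varepsilon^{\frac{1}{10}+\frac{2\gamma}{5}}$ both exponents beat $\kappa'$ under the constraint $\kappa'<\frac{1}{20}+\frac{\gamma}{5}$. Your exponent bookkeeping is accurate.
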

\begin{proof}
We prove this from Lemma \ref{lem85} in the same way as the proof of Theorem 5.1 in \cite{f}.
\qed \end{proof}

We get similar results for sub solution $u_- ^\varepsilon$ in the similar way to the proofs of Corollary \ref{cor85} and Proposition \ref{thm83}. We set $w(t,x) := u_- ^\varepsilon (\varepsilon ^{-2\gamma-\frac{1}{2}}t,\varepsilon ^{\frac{1}{2}}x)$ for all $t\in[0,\infty )$ and $x\in \mathbb{R}$.

\begin{corollary}
\label{cor86}
If we can take $\kappa > \kappa ' >1$ which satisfy $(\kappa' +\frac{21}{40} + \frac{\gamma}{10})\vee 2\kappa' < \gamma -\frac{C_f}{\mu}$ and $1< \kappa ' <\frac{1}{20} + \frac{\gamma}{5}$, then there exists a positive random variable $\widetilde{C}(\omega ) \in L^\infty (\Omega )$ such that
\begin{align}
\lim _{\varepsilon \downarrow 0}P\left( s(w_t) \in H^\alpha \ for \ all \ t \in [\widetilde{C}(\omega ) \varepsilon^{2\gamma +\frac{3}{2}} |\log \varepsilon |,T] \right) =1, \nonumber
\end{align}
for all $0< \alpha < \frac{1}{4}$.
\end{corollary}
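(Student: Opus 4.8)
The plan is to obtain Corollary \ref{cor86} by mirroring, for the rescaled sub solution $w(t,x) = u_-^\varepsilon(\varepsilon^{-2\gamma-\frac12}t,\varepsilon^{\frac12}x)$, the chain of estimates that produced Corollary \ref{cor85} for $v$. The argument is symmetric under $u_+^\varepsilon \leftrightarrow u_-^\varepsilon$: the sub solution $u_-^\varepsilon$ again obeys the SPDE (\ref{eq:spde}) by (i) of (\ref{in}), stays in the $C\varepsilon^\kappa$-neighbourhood of $M^\varepsilon$ up to the stopping times in (ii) of (\ref{in}), and its explicitly constructed initial value (the display following (\ref{ini3})) satisfies the same regularity and exponential-decay bounds as that of $u_+^\varepsilon$. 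So I would first prove the analogues of Lemmas \ref{lem84} and \ref{lem85} with $v$ replaced by $w$, and then deduce the corollary from the sub-solution version of Lemma \ref{lem85} exactly as Corollary \ref{cor85} was deduced from Lemma \ref{lem85} (the argument of Lemma 5.6 of \cite{f}).

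For the analogue of Lemma \ref{lem84} I would split $\|\nabla s^{\delta}(w_t)\|_{L^2}^2$ into the four regions of (\ref{est8-1}). The central part on $[-\varepsilon^{-\frac12}-1,\varepsilon^{-\frac12}+1]$ and the two transition parts are handled verbatim as for $v$, using Lemma \ref{lem83} (which applies to $w$ because $u_-^\varepsilon(0,\cdot)$ meets its hypotheses) together with the same choice $\delta_\varepsilon = \varepsilon^{\frac{1}{10}+\frac{2\gamma}{5}}$. For the tail part on $I_\varepsilon^+ \cup I_\varepsilon^-$ I would run the energy identity for $s_t := w_t - m$ with boundary values at $x = \pm(\varepsilon^{-\frac12}+1)$: Lemma \ref{lem82} transfers the estimates of Proposition \ref{thm82} (applied to $\bar{u}_\pm^{\varepsilon,\delta}$) to $u_-^\varepsilon$, so these boundary values are within $\varepsilon^\kappa$ of $\pm 1$; Lemma 6.2 of \cite{f} bounds $\nabla s$ there; and the reaction contribution $\int\{f(w_t)-f(m)\}s_t\,dx\,dt$ is nonpositive since $(f(w_t)-f(m))(w_t-m)\le 0$ wherever $w_t$ and $m$ are near a stable point, which holds on $I_\varepsilon^\pm$ by Proposition \ref{thm82}. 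This yields $\int_{I_\varepsilon^+\cup I_\varepsilon^-}(\nabla s(t,x))^2\,dx \le C\varepsilon^{2\gamma+\frac12+2\kappa} + CT\varepsilon^{\kappa}$ on the good event, hence $P(\tau^{\delta_\varepsilon}_w \le \varepsilon^{2\gamma+\frac32+\alpha}|\log\varepsilon|)\to 1$ with $\tau^{\delta_\varepsilon}_w := \inf\{t : \|s(w_t^{\delta_\varepsilon})\|_{H^1}\le\varepsilon^{\kappa'}\}$. Iterating as in the proof of Proposition 5.4 of \cite{f} then gives $\|s(w_t^{\delta_\varepsilon})\|_{H^1}\le\varepsilon^{\kappa'}$ for all $t\in[\widetilde{C}(\omega)\varepsilon^{2\gamma+\frac32}|\log\varepsilon|,T]$ with probability $\to 1$, where $\widetilde{C}(\omega) = (\varepsilon^{-2\gamma-\frac32}|\log\varepsilon|^{-1}\tau^{\delta_\varepsilon}_w)\wedge 1 \in L^\infty(\Omega)$.

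From this $H^1$-localization I would then pass to the un-smoothed profile: combining the bound on $\|s(w_t^{\delta_\varepsilon})\|_{H^1}$ with the convolution-error estimate $\|w_t - w_t^{\delta_\varepsilon}\|_{L^2} \le Y^\varepsilon\delta_\varepsilon + Z^\varepsilon\varepsilon^{\frac{1}{16}+\frac{3\gamma}{4}-\alpha}\delta_\varepsilon^{\frac14-\alpha'}$ of Lemma \ref{lem83}, the Sobolev-interpolation argument of Lemma 5.6 of \cite{f} (used already to prove Corollary \ref{cor85}) gives $s(w_t)\in H^\alpha$ for every $\alpha\in(0,\frac14)$, uniformly on $[\widetilde{C}(\omega)\varepsilon^{2\gamma+\frac32}|\log\varepsilon|,T]$, with probability tending to $1$, which is the claim.

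\textbf{The main obstacle} is not any new analytic difficulty but bookkeeping: one must check that the explicit initial datum of $u_-^\varepsilon$ satisfies the hypotheses of Lemma \ref{lem83} and of Lemmas 5.4--5.5 of \cite{f}, and that every sign in the energy identity reverses correctly in the sub-solution (lower-bound) case, in particular the nonpositivity of the reaction term on $I_\varepsilon^\pm$. Once these routine verifications are in place, the estimates are the exact mirror image of the super-solution computation already carried out for $v$.
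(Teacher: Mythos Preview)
Your proposal is correct and is exactly the approach the paper takes: the paper states Corollary \ref{cor86} immediately after the sentence ``We get similar results for sub solution $u_-^\varepsilon$ in the similar way to the proofs of Corollary \ref{cor85} and Proposition \ref{thm83},'' giving no separate proof, and Corollary \ref{cor85} itself is proved by invoking Lemma \ref{lem85} together with the argument of Lemma 5.6 of \cite{f}. Your outline is in fact more detailed than what the paper records, but the route is identical.
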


\begin{proposition}
\label{thm84}
If we can take $\kappa > \kappa ' >1$ which satisfy $(\kappa' +\frac{21}{40} + \frac{\gamma}{10})\vee 2\kappa' < \gamma -\frac{C_f}{\mu}$ and $1< \kappa ' <\frac{1}{20} + \frac{\gamma}{5}$, then there exists a positive random variable $\widetilde{C}(\omega ) \in L^\infty (\Omega )$ such that
\begin{align}
\lim _{\varepsilon \downarrow 0}P \left( dist (w_t, M)\leq \varepsilon ^{\kappa '} \  for\  all\  t \in [\widetilde{C}(\omega )\varepsilon^{2\gamma +\frac{3}{2}} |\log \varepsilon |,T] \right) =1. \nonumber
\end{align}
\end{proposition}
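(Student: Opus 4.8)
The plan is to prove Proposition \ref{thm84} as the exact sub-solution mirror of Proposition \ref{thm83}, running the chain of Lemmas \ref{lem83}--\ref{lem85} with $w$ in place of $v$. First I would note that $w(t,x)=u_-^\varepsilon(\varepsilon^{-2\gamma-\frac{1}{2}}t,\varepsilon^{\frac{1}{2}}x)$ satisfies, in the law sense, the same rescaled equation $\dot w=\varepsilon^{-2\gamma-\frac{3}{2}}\{\Delta w+f(w)\}+\varepsilon^{-\frac{1}{2}}a(\varepsilon^{\frac{1}{2}}x)\dot W_t(x)$ as $v$ (up to the stopping time $\tau_1$), because $u_-^\varepsilon$ solves the same SPDE (\ref{eq:spde}) and differs from $u_+^\varepsilon$ only by the sign in its initial datum, which has the form $(1-\chi_1)u^\varepsilon(0,\cdot)+\chi_2\big(m(\varepsilon^{-\frac{1}{2}}(\cdot-\xi_0+C\varepsilon^{\bar{\beta}}))-C'\varepsilon^{\kappa}\big)$. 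Consequently every hypothesis used in Lemmas \ref{lem83}, \ref{lem84}, \ref{lem85} holds verbatim for $w$: by (ii) of (\ref{in}) the initial datum lies in the $C\varepsilon^{\kappa}$-neighbourhood of $M^\varepsilon$ up to $(\frac{1}{\mu}-C_1)\varepsilon|\log\varepsilon|\wedge\tau_2\wedge\tau_3$; the sub-solution version of Proposition \ref{thm82} gives $w_t$ close to $1$ (resp. $-1$) on $\{x\geq\varepsilon^{-\frac{1}{2}}+1\}$ (resp. $\{x\leq-\varepsilon^{-\frac{1}{2}}-1\}$); the smallness of the stochastic convolution (Lemmas \ref{lem87} and \ref{lem81}) is a statement about the noise alone; and Proposition \ref{thm81} together with Proposition \ref{pro21} again supply the closeness to the deterministic PDE. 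With the same choice $\delta_\varepsilon=\varepsilon^{\frac{1}{10}+\frac{2\gamma}{5}}$, the $H^1$ energy estimate on $I_\varepsilon^+\cup I_\varepsilon^-$ from the proof of Lemma \ref{lem84} applies to $w$ unchanged, so the stopping time $\tau_-^{\delta_\varepsilon}:=\inf\{t>0:\|s(w_t^{\delta_\varepsilon})\|_{H^1}\leq\varepsilon^{\kappa'}\}$ satisfies $P(\tau_-^{\delta_\varepsilon}\leq\varepsilon^{2\gamma+\frac{3}{2}+\alpha}|\log\varepsilon|)\to1$, and $\|s(w_t^{\delta_\varepsilon})\|_{H^1}\leq\varepsilon^{\kappa'}$ for all $t\in[\widetilde{C}(\omega)\varepsilon^{2\gamma+\frac{3}{2}}|\log\varepsilon|,T]$ with probability tending to $1$, where $\widetilde{C}(\omega)=(\varepsilon^{-2\gamma-\frac{3}{2}}|\log\varepsilon|^{-1}\tau_-^{\delta_\varepsilon})\wedge1\in L^\infty(\Omega)$.

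The passage from the regularised Fermi coordinate to $dist(w_t,M)$ then proceeds as in the proof of Theorem 5.1 of \cite{f}. On the event where the Fermi coordinate $w_t=s(w_t)+m_{\eta(w_t)}$ is well defined one has $dist(w_t,M)=\|s(w_t)\|_{L^2}\leq\|s(w_t)-s(w_t^{\delta_\varepsilon})\|_{L^2}+\|s(w_t^{\delta_\varepsilon})\|_{H^1}$, and by the triangle inequality as in (\ref{est8-2}) the first term is at most $\|w_t-w_t^{\delta_\varepsilon}\|_{L^2}+\|m_{\eta(w_t)}^{\delta_\varepsilon}-m_{\eta(w_t)}\|_{L^2}+\|m_{\eta(w_t)}-m_{\eta(w_t^{\delta_\varepsilon})}\|_{L^2}$, which by the sub-solution form of Lemma \ref{lem83} and Lemmas 5.4, 5.5 of \cite{f} is $o(\varepsilon^{\kappa'})$ for this $\delta_\varepsilon$; together with the $H^1$-bound this gives $dist(w_t,M)\leq\varepsilon^{\kappa'}$ after absorbing the constant into $\kappa'<\kappa$. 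The well-definedness of the Fermi coordinate on all of $[\widetilde{C}(\omega)\varepsilon^{2\gamma+\frac{3}{2}}|\log\varepsilon|,T]$ is closed by the bootstrap used in \cite{f}: it holds on an initial sub-interval by (ii) of (\ref{in}), and the coercivity of $\mathcal{H}^\varepsilon$ near its set of minimisers $M^\varepsilon$ keeps $dist(w_t,M)$ inside the tubular neighbourhood of radius $\beta$ thereafter.

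The main obstacle is bookkeeping rather than a new estimate: one must verify that switching from $u_+^\varepsilon$ to $u_-^\varepsilon$ preserves each hypothesis above --- that the initial datum with the minus sign still has the $C^2$ regularity and the exponential tails feeding Propositions \ref{pro21}, \ref{thm81}, \ref{thm82} (these tails coming from Proposition \ref{thm82} applied to $u^\varepsilon$ and from the shifted standing wave $m$, both with unchanged constants), that the sub-solution version of Proposition \ref{thm82} indeed makes $w_t\approx-1$ at the left endpoint of $I_\varepsilon^-$ so the boundary terms in the energy estimate remain negligible, and that the comparison direction in Proposition \ref{pro81} is used consistently so that $u_-^\varepsilon$ is genuinely a lower bound. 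Once these sign-and-tail points are checked, every remaining step is a literal transcription of the corresponding step for $v$, and the conclusion follows exactly as in Proposition \ref{thm83}.
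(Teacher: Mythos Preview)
Your proposal is correct and matches the paper's approach exactly: the paper does not give a separate proof of Proposition \ref{thm84} at all, merely stating that it follows ``in the similar way to the proofs of Corollary \ref{cor85} and Proposition \ref{thm83}'' with $w$ in place of $v$. Your write-up is in fact more detailed than the paper's own treatment, spelling out the sign-and-tail bookkeeping that the paper leaves implicit.
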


From these results, we get the dynamics of $u_\pm ^\varepsilon$ from the same argument as in the case $v_0 \in M$ (see Sections 7 and 8 of \cite{f}). 

\begin{proof}[Proof of Theorem \ref{thm23}]
At first, we note that the condition (\ref{gamma}) assures that all of lemmas, propositions and corollaries in this section hold.

Now we construct super and sub solutions again. We consider $C_1\varepsilon |\log \varepsilon |$ as the initial time $0$. From (\ref{in}), we can see $u_- ^\varepsilon (0,x)\leq u_0 ^\varepsilon (x) \leq u_+ ^\varepsilon(0,x)$, and Proposition \ref{pro81} allows us to compare the solutions as below:
\begin{align}
\label{est:compa}
u_- ^\varepsilon (t,x)\leq u^\varepsilon (t,x) \leq u_+ ^\varepsilon (t,x)\ for \ all \ t\in[0,\varepsilon ^{-2\gamma -\frac{1}{2}}T] \ and \ x \in \mathbb{R}.
\end{align}
Because $dist(u_\pm ^\varepsilon(0,\cdot) ,M^\varepsilon )\leq C\varepsilon ^{\kappa }$, by taking $\bar{u}_\pm ^\varepsilon(t,x) := u_\pm ^\varepsilon(\varepsilon ^{-2\gamma -1}t,x)$ and $\kappa  >1$, we can show that 
\begin{align}
P\left ( \underset{ t \in [0,T] }{ \sup } \| \bar{u}_\pm ^\varepsilon(t,\cdot )-\chi _{\xi^{\varepsilon ,\pm} _t}(\cdot )\| _{L^2(\mathbb{R})} \leq \delta \right ) \to 1\ \ \ (\varepsilon \to 0) \nonumber
\end{align}
in the same way as \cite{f}. The stochastic processes $\xi^{\varepsilon ,\pm} _t$ can be defined in the same way as (8.3) of \cite{f} (replace $v^\varepsilon$ in (8.3) by $v$ and $w$). We see that $|\xi^{\varepsilon ,\pm} _0-\xi_0| \leq \varepsilon^{\bar{\beta}} $ and the difference of initial value does not matter for the proof of tightness for $\{ P_\pm ^\varepsilon \}$ which are the distributions of $\{ \xi _t ^{\varepsilon ,\pm} \}$ on $C([0,T],\mathbb{R})$. Here $\bar{\beta}>0$ is a constant which is defined in Proposition \ref{pro21}. Thus the distribution of the process $\{ \xi^{\varepsilon ,\pm} _t\}$  on $C([0,T],\mathbb{R})$ converges to that of $\{ \xi _t\}$ weakly as $\varepsilon \to 0$. Therefore we obtain
\begin{align}
&\| \bar{u}_\varepsilon (t,\cdot )-\chi _{\xi^{\varepsilon } _t}(\cdot )\| _{L^2(\mathbb{R})} \nonumber \\
&\leq \| \bar{u}^\varepsilon (t,\cdot )-\bar{u}_+ ^\varepsilon (t,\cdot )\| _{L^2} + \| \bar{u}_+ ^\varepsilon (t,\cdot )-\chi _{\xi^{\varepsilon ,+ } _t}(\cdot )\| _{L^2} \nonumber \\
&\leq \| \bar{u}_+ ^\varepsilon (t,\cdot )-\bar{u}_- ^\varepsilon (t,\cdot )\| _{L^2} + \| \bar{u}_+ ^\varepsilon (t,\cdot )-\chi _{\xi^{\varepsilon ,+ } _t}(\cdot )\| _{L^2} \nonumber \\
&\leq 2 \| \bar{u}_+ ^\varepsilon (t,\cdot )-\chi _{\xi^{\varepsilon ,+ } _t}(\cdot )\| _{L^2} + \| \chi _{\xi^{\varepsilon ,+ } _t}(\cdot ) - \chi _{\xi^{\varepsilon ,- } _t}(\cdot )\| _{L^2} + \| \chi _{\xi^{\varepsilon ,- } _t}(\cdot )-\bar{u}_- ^\varepsilon (t,\cdot )\| _{L^2}, \nonumber
\end{align}
for all $t \in [0,T\wedge \varepsilon ^{2\gamma +\frac{1}{2}} (\tilde{\sigma} _\varepsilon \wedge \bar{\sigma} _\varepsilon)]$, by taking $\xi ^\varepsilon _t := \xi^{\varepsilon ,+} _t$. Here we set the stopping time
\begin{align}
&\tilde{\sigma} _\varepsilon := \inf \{t>0 | dist (v_t ,M)> \varepsilon ^{\kappa '} \ or \ \|v_t\|_{L^\infty}>2C_0 \ or \ v_t \notin H^\alpha +m \} \nonumber \\
&\bar{\sigma} _\varepsilon := \inf \{t>0 | dist (w_t ,M)> \varepsilon ^{\kappa '} \ or \ \|w_t\|_{L^\infty}>2C_0 \ or \ w_t \notin H^\alpha +m \} \nonumber
\end{align}
for fixed $\alpha <\frac{1}{4}$. Indeed, the first and the third inequality come from the triangle inequality, and (\ref{est:compa}) gives us the second inequality. From Propositions \ref{thm31}, \ref{thm83}, \ref{thm84}, Corollaries \ref{cor85} and \ref{cor86}, we see that $P(T \leq \varepsilon ^{2\gamma +\frac{1}{2}} (\tilde{\sigma} _\varepsilon \wedge \bar{\sigma} _\varepsilon)) \to 1$ as $\varepsilon \to 0$. This completes the proof of the theorem by taking $C(\omega):=C_1 + \widetilde{C}(\omega)$. Here, $C_1$ is introduced in Proposition \ref{pro21}.
\qed \end{proof}

\ \\
{\bf Acknowledgements}

The author would like to thank Professor T. Funaki for his tremendous supports and incisive advices. This work was supported by the Program for Leading Graduate Schools, MEXT, Japan and Japan society for the promotion of science, JSPS.



\end{document}